\providecommand{\eprint}[1]{\href{http://arxiv.org/abs/#1}{\texttt{arXiv\string:\allowbreak#1}}}
\theoremstyle{plain}
\newtheorem{thm}{Theorem}[section]
\newtheorem{lemma}[thm]{Lemma}
\newtheorem{proposition}[thm]{Proposition}
\theoremstyle{definition}
\newtheorem{facts}[thm]{Facts}
\newtheorem{remark}[thm]{Remark}
\newtheorem{notations}[thm]{Notations}
\newtheorem{definition}[thm]{Definition}
\newtheorem{claim}[thm]{Claim}
\newtheorem{assumption}[thm]{Assumption}
\numberwithin{equation}{section}
\newcommand{\ga}[2]{\begin{gather}\label{#1}#2 \end{gather}}
\let\ra\rightarrow
\def\to{\mathchoice{\longrightarrow}{\rightarrow}{\rightarrow}{\rightarrow}}
\def\hto{\mathrel{\lhook\joinrel\to}}
\def\To#1{\mathchoice{\xrightarrow{\textstyle\kern4pt#1\kern3pt}}{\stackrel{#1}{\longrightarrow}}{}{}}
\def\Mto#1{\mathrel{\mapstochar\To{#1}}}
\def\Hto#1{\mathrel{\lhook\joinrel\To{#1}}}
\newcommand{\gr}{{\rm gr}}
\let\Sp\Spec
\newcommand{\sD}{{\mathcal D}}
\newcommand{\sE}{{\mathcal E}}
\newcommand{\sF}{{\mathcal F}}
\newcommand{\sH}{{\mathcal H}}
\newcommand{\sL}{{\mathcal L}}
\newcommand{\sN}{{\mathcal N}}
\newcommand{\sO}{{\mathcal O}}
\newcommand{\sR}{{\mathcal R}}
\newcommand{\sT}{{\mathcal T}}
\newcommand{\C}{{\mathbb C}}
\newcommand{\N}{{\mathbb N}}
\newcommand{\Q}{{\mathbb Q}}
\newcommand{\Z}{{\mathbb Z}}
\DeclareMathOperator{\Mod}{\mathsf{Mod}}
\DeclareMathOperator{\id}{id}
\DeclareMathOperator{\image}{im}
\let\wh\widehat
\let\hat\widehat
\let\emptyset\varnothing
\let\hat\widehat
\let\epsilon\varepsilon
\newcommand{\rb}{\mathrm{b}}
\DeclareMathOperator{\catD}{\mathsf{D}}
\DeclareMathOperator{\DR}{DR}
\newcommand{\Euler}{\mathrm{Eu}}
\newcommand{\rc}{\mathrm{c}}
\newcommand{\triv}{\mathrm{triv}}
\newcommand{\wsD}{\widetilde\sD}
\newcommand{\wsE}{\widetilde\sE}
\newcommand{\wsN}{{\widetilde\sN}}
\newcommand{\wsO}{\widetilde\sO}
\newcommand{\wsT}{{\widetilde\sT}}
\newcommand{\wVsE}{\widetilde{V^{-1}\sE}}
\newcommand{\wTheta}{\widetilde\Theta}
\newcommand{\wpartial}{\widetilde\partial}
\newcommand{\wxi}{\widetilde\xi}
\newcommand{\btheta}{{\boldsymbol{\theta}}}
\DeclareMathOperator{\Car}{Car}
\DeclareMathOperator{\Spencer}{Sp}
\title [Good lattices]{Good lattices of algebraic connections}
\author{Hélène Esnault \and Claude Sabbah}
\address{Freie Universität Berlin, Mathematik, Arnimallee 3, 14195, Berlin, Germany}
\email{esnault@math.fu-berlin.de}
\address{CMLS, École polytechnique, CNRS, Université Paris-Saclay,
F--91128 Palaiseau cedex,
France}
\email{claude.sabbah@polytechnique.edu}
\thanks{The first author is supported by the Einstein program}
\begin{document}
\begin{abstract}
We construct a logarithmic model of connections on smooth quasi-projective $n$-dimensional geometrically irreducible varieties defined over an algebraically closed field of characteristic $0$. It consists of a good compactification of the variety together with $(n+1)$ lattices on it which are stabilized by log differential operators, and compute algebraically de Rham cohomology. The construction is derived from the existence of good Deligne-Malgrange lattices, a theorem of Kedlaya and Mochizuki which consists first in eliminating the turning points. Moreover, we show that a logarithmic model obtained in this way, called a good model, yields a formula predicted by Michael Groechenig, computing the class of the characteristic variety of the underlying $\sD$-module
in the $K$-theory group of the variety.
\end{abstract}

\subjclass[2010]{14F40, 14F10, 32C38}

\keywords{Flat connection, good lattice, good model, de Rham cohomology, characteristic variety}
\maketitle

\section{Introduction} \label{intro}
Let $U$ be a smooth quasi-projective geometrically irreducible variety of dimension $n$ defined over a characteristic $0$ field $k$. An open embedding $j: U\hto X$ is said to be a {\it good compactification} if $X$ is smooth projective and $D=X\setminus U$ is a strict normal crossings divisor. Here strict normal crossings divisor means that the irreducible components of $D_{\bar k}
$, where $\bar k\supset k$ is an algebraic closure and the lower index $_{\bar k}$ indicates the base change $\otimes_k \bar k$, are smooth and intersect transversally.

\begin{definition} \label{dfn:goodmodel}
Let $(\sE,\nabla)$ be vector bundle on $U$ with an integrable connection relative to $k$.
A tuple $$\big(X, (E_0, E_1,\ldots, E_n)\big)$$ is called a \emph{logarithmic model} of $\big(U, (\sE,\nabla)\big)$ if the following conditions are fulfilled.
\begin{itemize}
\item[0)] $j: U\hto X$ is a good compactification;
\item[1)] $E_0\subset E_1\subset \ldots \subset E_n $ is a tower of locally free lattices of $ j_*\sE$;
\item[2)] $\nabla(\Omega^i_X(\log D)\otimes_{\sO_X} E_i)\subset \Omega^{i+1}_X(\log D)\otimes_{\sO_X} E_{i+1}$;
\item[3)]For any effective divisor $\Delta$ with support in $D$, the embeddings of $k$-linear complexes
\ga{}{\Omega^\bullet_X(\log D) \otimes_{\sO_X} E_\bullet \subset
\Omega^\bullet_X(\log D)\otimes_{\sO_X} E_\bullet (\Delta) \subset
j_*(\Omega^\bullet_U\otimes_{\sO_U} \sE) \notag}
are quasi-isomorphisms.
\end{itemize}
\end{definition}

There is another natural definition.
\begin{definition} \label{dfn:verygoodmodel}
Let $(\sE,\nabla)$ be vector bundle on $U$ with an integrable connection relative to $k$.
A tuple $$\big(X, (E_0, E_1,\ldots, E_n)\big)$$ is called a \emph{good model} of $\big(U, (\sE,\nabla)\big)$ if the following conditions are fulfilled.
\begin{itemize}
\item[0)] $j: U\hto X$ is a good compactification;
\item[0')] $j$ resolves the turning points of $(\sE,\nabla)$;
\item[1')] $E_0$ is the Deligne-Malgrange lattice of $\sE$ (see Definition~\ref{defn:goodlattice} and Theorem~\ref{thm:mochizuki}) and~$E_{i+1}$ is defined inductively as the $\sO_X$-coherent subsheaf of $j_*\sE$ spanned by~$ E_i$ and $\Theta_X(\log D) \cdot E_i$.
\end{itemize}
\end{definition}
Here, $\Theta_X(\log D)$ is the sheaf of vector fields stabilizing~$D$. That good models exist is due to Kedlaya and Mochizuki. Our first main result is the following theorem.

\begin{thm} \label{thm:main}
Good models are logarithmic models.
\end{thm}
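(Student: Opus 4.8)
The plan is to verify conditions 1), 2), 3) of Definition~\ref{dfn:goodmodel} for a good model $\big(X,(E_0,\dots,E_n)\big)$. Condition 0) is built in, and hypothesis 0') is what makes the local analysis possible. Condition 2) is a purely formal consequence of the inductive definition of the $E_i$; conditions 1) and 3) both reduce, by the same dévissage, to a local computation governed by the good formal structure of $(\sE,\nabla)$, and the analytic heart of the matter is a Poincaré-type vanishing for the purely irregular elementary models.

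For condition 2): because $\Theta_X(\log D)$ is locally free and $\sO_X$-dual to $\Omega^1_X(\log D)$, one has $\Omega^1_X(\log D)\otimes_{\sO_X}j_*\sE=\mathcal{H}om_{\sO_X}\big(\Theta_X(\log D),j_*\sE\big)$, and for a coherent subsheaf $F\subset j_*\sE$ an element of this space lies in $\Omega^1_X(\log D)\otimes_{\sO_X}F$ exactly when all its values on local sections of $\Theta_X(\log D)$ lie in $F$. Applying this to $F=E_{i+1}$ and to the element $\nabla s$ (which lies in $\Omega^1_X(\log D)\otimes_{\sO_X}j_*\sE$ because $j_*\sE$ is already an $\sO_X(*D)$-module), for $s$ a local section of $E_i$, and using $E_{i+1}=E_i+\Theta_X(\log D)\cdot E_i$, we obtain $\nabla E_i\subset\Omega^1_X(\log D)\otimes E_{i+1}$. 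Condition 2) then follows from Leibniz: $\nabla(\omega\otimes s)=d\omega\otimes s\pm\omega\wedge\nabla s$ lies in $\Omega^{i+1}_X(\log D)\otimes E_i+\Omega^i_X(\log D)\wedge\big(\Omega^1_X(\log D)\otimes E_{i+1}\big)\subset\Omega^{i+1}_X(\log D)\otimes E_{i+1}$.

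Conditions 1) and 3) are local on $X$, so we may pass to an analytic, or formal, neighbourhood of a point $x\in D$; local freeness can then be tested after faithfully flat completion, and the two inclusions of complexes in 3) are quasi-isomorphisms iff the cohomology sheaves of their mapping cones vanish near $x$. Formation of $E_\bullet$ from $E_0$ commutes with such localization and with pullback along a Kummer cover $\pi$ ramified along $D$ (for which $\Theta_{X'}(\log D')\xrightarrow{\sim}\pi^*\Theta_X(\log D)$), while local freeness descends along $\pi$ and the log de Rham complexes are pulled back by $\pi$ and recovered by $G$-invariants; so by 0') and the theorem of Kedlaya--Mochizuki we may assume $(\sE,\nabla)$ is formally decomposed, $\sE=\bigoplus_\varphi\sE_\varphi$ with $\sE_\varphi=\sL_\varphi\otimes R_\varphi$, $\varphi$ a good monomial and $R_\varphi$ regular. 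The Deligne--Malgrange lattice splits along this decomposition (Theorem~\ref{thm:mochizuki}) and $\nabla$ preserves it, so $E_\bullet=\bigoplus_\varphi E_\bullet^\varphi$ and it suffices to treat one $\sE_\varphi$. If $\varphi=0$ then $E_0^0$ is a logarithmic lattice, so $\Theta_X(\log D)\cdot E_0^0\subset E_0^0$ and the tower is constant, equal to this Deligne-type lattice. If $\varphi\neq0$ with pole divisor $\boldsymbol m=\operatorname{div}_\infty\varphi\neq0$, then writing $\nabla_\xi=\nabla^{R_\varphi}_\xi+\xi(\varphi)\cdot\operatorname{id}$ and using that $\xi(\varphi)$ is a unit times the monomial $x^{-\boldsymbol m}$ for suitable logarithmic vector fields $\xi$, together with $\sO_X+\sO_X\,x^{-\boldsymbol m}=\sO_X(\boldsymbol m)$, one computes $E_i^\varphi=\sL_\varphi(i\boldsymbol m)\otimes\big(\text{Deligne lattice of }R_\varphi\big)$. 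In all cases $E_i^\varphi$ is locally free, which gives 1).

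For condition 3), the same reductions bring us to the elementary models. For the regular factor this is Deligne's comparison theorem: for any effective $\Delta$ supported on $D$, the residue eigenvalues of $E_0^0(\Delta)$ have real part $<1$, hence are never positive integers, so $\Omega^\bullet_X(\log D)\otimes E_\bullet^0(\Delta)\hookrightarrow j_*(\Omega^\bullet_U\otimes R_0)$ is a quasi-isomorphism for every such $\Delta$, and the inclusions among these complexes are quasi-isomorphisms by sandwiching. For the irregular factor one must show that, locally near $D$, all three of $\Omega^\bullet_X(\log D)\otimes E_\bullet^\varphi$, its $\Delta$-twist, and $j_*(\Omega^\bullet_U\otimes\sE_\varphi)$ have vanishing cohomology sheaves; since away from $D$ they agree, all inclusions are then quasi-isomorphisms. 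This is the Poincaré-type lemma: in cohomological degree $i$ the de Rham differential on $\Omega^i_X(\log D)\otimes E_i^\varphi=\Omega^i_X(\log D)\otimes\sL_\varphi(i\boldsymbol m)\otimes(\cdots)$ is, modulo terms of strictly lower pole order, exterior multiplication by $d\varphi$, and the associated Koszul-type complex is acyclic because $d\varphi$ becomes ``invertible'' once one completes (the factorially divergent primitives live in the completed ring). The main obstacle will be proving this irregular local acyclicity while keeping precise track of the shifting lattices $E_\bullet$ through the reduction to the formal --- and, if needed, ramified --- model; the remaining steps are bookkeeping with the Deligne--Malgrange lattice and standard descent.
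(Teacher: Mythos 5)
Your outline of conditions 1) and 2), the reduction to the formal unramified elementary factors $L_\varphi\otimes R_\varphi$, the computation $E_i^\varphi=L_\varphi(i\cdot I_\varphi)\otimes R_\varphi$, and the treatment of the regular factor via Deligne's residue condition all match the paper. But there is a genuine gap in your treatment of condition 3) for the irregular factors: the claim that $\Omega^\bullet_X(\log D)\otimes E^\varphi_\bullet$, its $\Delta$-twist, and $j_*(\Omega^\bullet_U\otimes\sE_\varphi)$ ``have vanishing cohomology sheaves locally near $D$'' is false in the Zariski (and analytic) topology; it holds only after formal completion. Already for $n=1$, $\varphi=1/x$, $R_\varphi$ trivial, the complex $E_0\to\Omega^1_X(\log D)\otimes E_1$ is $\sO\to\sO x^{-2}dx$, $f\mto(f'-f/x^2)\,dx$, and solving $x^2f'-f=g$ for $g=x^2$ forces $f=-\sum_{n\geq 2}(n-1)!\,x^n$, which lies in $k[[x]]$ but in no algebraic or convergent local ring; so the cokernel over $\sO_{X,0}$ is nonzero --- your own parenthetical remark that ``the factorially divergent primitives live in the completed ring'' is exactly the statement that the cohomology sheaves do \emph{not} vanish before completion. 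What is true is that the \emph{inclusions} are quasi-isomorphisms, and this cannot be deduced from the formal statement by faithful flatness, because the de~Rham differentials are not $\sO_X$-linear: completion does not commute with taking cohomology of such complexes, which is precisely why the earlier step of your reduction (``quasi-isomorphism iff the mapping cones are acyclic, checkable after completion'') also breaks down.

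The missing idea, which is the architectural core of the paper's proof, is to interpolate filtrations whose \emph{graded} differentials are $\sO_X$-linear, so that acyclicity of the graded pieces can legitimately be tested on $\hat X_x$ and then assembled. Concretely, the paper factors the comparison through $V^0\sE=\bigcup_iE_i$: the passage from $\DR_{\log D}E_\bullet(\Delta)$ to $\DR_{\log D}V^0\sE(\Delta)$ uses the lattice filtration $F_q$ whose graded differential is wedging with $d\varphi$ (Proposition~\ref{prop:DRlogEV0} --- this is your Koszul argument, but applied only to the $\sO_X$-linear quotients, where multiplication by $-m_i$ is invertible with no divergence issue); the passage from $\DR_{\log D}V^0\sE(\Delta)$ to $\DR(j_*\sE)$ is a \emph{filtered} quasi-isomorphism between the stupid filtration and Deligne's pole-order filtration $P^\bullet$ (Theorem~\ref{thm:P}), again with $\sO_X$-linear graded pieces. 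A second point your sketch elides: for the ramified case the invariant-taking must be performed on an honest scheme before completing (for the same non-linearity reason), which is why the paper algebraizes the formal Galois cover via Kawamata's theorem (Lemma~\ref{lem:alg}) rather than working with a purely formal Kummer cover.
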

Our purpose is to prove that the lattices $E_i$ as described in 1') verify 1) and~3). It is performed in Section~\ref{sec:Ei} by constructing specific filtrations on the (log)-de Rham complexes, the graded of which are $\sO_X$-linear.

The Deligne-Malgrange lattice $E_0$ and its log derivatives $E_i$ ($i\geq1$) can also be used to compute the characteristic class of $j_*\sE$ in Grothendieck's $K$-group $K_0(\sO_X)$. Recall that, in \cite[\S6.1]{Lau83}, Laumon defines a group homomorphism
\[
\Car:K_0(\sD_X)\to K_0(\sO_{T^*X})
\]
 as follows. Let $F_\bullet\sD_X$ be the filtration of $\sD_X$ by the order of differential operators. By an \emph{$F\sD_X$-filtration of a $\sD_X$-module $M$}, we mean an increasing filtration $F_\bullet M$ such that $F_k\sD_X\cdot F_\ell M\subset F_{k+\ell}M$ for all $k,\ell\in\Z$. A \emph{coherent $F\sD_X$-filtration} is an $F\sD_X$-filtration such that $\gr^FM$ is coherent over $\gr^F\sD_X$. For any coherent $\sD_X$-module $M$, let $F_\bullet M$ be any coherent $F\sD_X$-filtration. Then,
denoting by $[\cdot]$ the class in $K_0$, one has
$$\Car M=[\gr^FM].$$
Composing with the restriction by the zero section $i:X\hto T^*X$, we obtain a group homomorphism
\[
Li^*\Car:K_0(\sD_X)\to K_0(\sO_X).
\]
Computing $Li^*\Car M$ amounts to computing the class of
$$\gr^F M\otimes^L_{\sO_{T^*X}}\sO_X \in K_0(\sO_X).$$ Our second main result confirms an expectation of Michael Groechenig.

\begin{thm}\label{thm:main2}
Let $(\sE,\nabla)$ be vector bundle on $U$ with an integrable connection relative to~$k$. Let
$\big(X, (E_0, E_1,\ldots, E_n)\big)$ be a good model of $\big(U, (\sE,\nabla)\big)$.
Then
\[
Li^*\Car(j_*\sE)=\sum_{i=0}^n(-1)^{n-i}[\omega_X^{-1}\otimes\Omega^i_X(\log D)\otimes E_i]\quad \text{in }K_0(\sO_X).
\]
\end{thm}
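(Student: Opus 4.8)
Write $M=j_*\sE$. The plan is threefold: first, for an arbitrary coherent $F\sD_X$-filtration on $M$, identify $Li^*\Car M$ with an $\omega_X^{-1}$-twisted Euler class of the de Rham complex of $M$; second, invoke Theorem~\ref{thm:main} to recognise $\Omega^\bullet_X(\log D)\otimes E_\bullet$ as a model of that de Rham complex; third — and this will be the crux — compare the termwise $K_0$-classes of the two models.

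For the setup: since $D$ is a strict normal crossings divisor it is locally principal, so $j\colon U\hto X$ is an affine open immersion; hence $M$ coincides with the $\sD_X$-module pushforward $j_+\sE$, it is holonomic, and its de Rham complex $\DR(M)=(\Omega^\bullet_X\otimes_{\sO_X}M,\nabla)$ equals $j_*(\Omega^\bullet_U\otimes_{\sO_U}\sE)$. Pick any coherent $F\sD_X$-filtration $F_\bullet M$; by Laumon's construction $Li^*\Car M=[\gr^FM\otimes^L_{\sO_{T^*X}}\sO_X]$. I would then run the standard filtered-de-Rham computation: let $F_p\DR(M)$ be the subcomplex of $\DR(M)$ with $j$-th term $\Omega^j_X\otimes F_{p+j}M$ (genuinely a subcomplex, since $\nabla$ raises the index by one), so that each $\gr^F_p\DR(M)$ is a bounded complex of coherent $\sO_X$-modules with $\sO_X$-linear differential, namely the principal symbol of $\nabla$. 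Via the isomorphisms $\Omega^j_X\cong\bigwedge^{n-j}\Theta_X\otimes\omega_X$, the sum $\bigoplus_p\gr^F_p\DR(M)$ becomes $\omega_X$ tensored with the Koszul complex $\bigwedge^\bullet\Theta_X\otimes_{\sO_X}\gr^FM$ computing the derived restriction of $\gr^FM$ to the zero section; since $\gr^FM$ is finitely generated over $\gr^F\sD_X=\mathrm{Sym}_{\sO_X}\Theta_X$, its $\mathrm{Tor}$-sheaves against $\sO_X$ are $\sO_X$-coherent, hence supported in finitely many of the weights coming from $F_\bullet M$. Consequently $\gr^F_p\DR(M)$ is acyclic for $|p|\gg0$, the inclusion $F_p\DR(M)\hto\DR(M)$ is a quasi-isomorphism for $p\gg0$, and telescoping gives, in $K_0(\sO_X)$,
\[
Li^*\Car M=(-1)^n\,[\omega_X^{-1}]\cdot\sum_{j=0}^n(-1)^j[\Omega^j_X\otimes F_{p+j}M]\qquad(p\gg0),
\]
the right-hand side being independent of $p\gg0$ (the sign $(-1)^n$ arises from the reindexing $j\leftrightarrow n-j$ in the Koszul complex). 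This is essentially the computation behind \cite[\S6.1]{Lau83}; the only delicate point is keeping track of $\sO_X$-coherence against the unbounded order filtration.

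It then remains to prove that, for $p\gg0$, $\sum_{j}(-1)^j[\Omega^j_X\otimes F_{p+j}M]=\sum_{i=0}^n(-1)^i[\Omega^i_X(\log D)\otimes E_i]$ in $K_0(\sO_X)$; the theorem follows by substituting into the displayed formula. By Theorem~\ref{thm:main}, $(X,(E_0,\dots,E_n))$ is a logarithmic model, so by condition~3) of Definition~\ref{dfn:goodmodel} the complex $\Omega^\bullet_X(\log D)\otimes E_\bullet$ is a subcomplex of $j_*(\Omega^\bullet_U\otimes\sE)=\DR(M)$ whose inclusion is a quasi-isomorphism, and likewise for $F_p\DR(M)$ with $p\gg0$. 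The hard part is now that quasi-isomorphism alone does \emph{not} equate the termwise $K_0$-classes of the two complexes, because the de Rham differential is not $\sO_X$-linear: one must upgrade the quasi-isomorphisms to \emph{filtered} ones whose associated graded complexes are $\sO_X$-linear. This is exactly the structure built in Section~\ref{sec:Ei} for the proof of Theorem~\ref{thm:main}: the logarithmic complex carries a finite filtration with $\sO_X$-linear graded, and one arranges a compatible finite filtration on $\DR(M)$ (hence on $F_p\DR(M)$, $p\gg0$) making the comparison filtered. At the graded level all differentials are $\sO_X$-linear, so termwise $K_0$-classes there are quasi-isomorphism invariants — each is the alternating sum of cohomology sheaves — whence the graded classes of the two complexes coincide; summing over the finite filtration recovers the termwise classes and yields the desired equality, and with it
\[
Li^*\Car(j_*\sE)=(-1)^n[\omega_X^{-1}]\cdot\sum_{i=0}^n(-1)^i[\Omega^i_X(\log D)\otimes E_i]=\sum_{i=0}^n(-1)^{n-i}[\omega_X^{-1}\otimes\Omega^i_X(\log D)\otimes E_i].
\]
It is in producing this simultaneous $\sO_X$-linear-graded refinement that the defining hypotheses of a good model — the Deligne–Malgrange choice of $E_0$ and the resolution of turning points — are used, as in Theorem~\ref{thm:main}.
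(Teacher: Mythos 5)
Your reduction of $Li^*\Car(j_*\sE)$ to the alternating sum $(-1)^n\sum_j(-1)^j[\omega_X^{-1}\otimes\Omega^j_X\otimes F_{p+j}M]$ for a coherent $F\sD_X$-filtration of $M=j_*\sE$ is correct and is exactly the content of Appendix \ref{app:A} (formulas \eqref{eq:LiCarM}--\eqref{eq:carM}). The gap lies in the step you yourself flag as the crux: producing a \emph{filtered} quasi-isomorphism, with \emph{coherent} $\sO_X$-linear graded pieces, between $\Omega^\bullet_X\otimes F_{p+\bullet}M$ and $\Omega^\bullet_X(\log D)\otimes E_\bullet$. You propose to reuse ``the structure built in Section \ref{sec:Ei}'', but the filtrations there cannot serve this purpose: the terms of the filtration $P^\bullet$ on $\DR(j_*\sE)$ are generated over $F_\bullet\sD_X$ by $(V^0\sE)(D+\Delta)$, and $V^0\sE=\bigcup_iE_i$ is \emph{not} $\sO_X$-coherent unless $(\sE,\nabla)$ is regular singular (Facts \ref{lem:V0Ei}); likewise the graded pieces of the stupid filtration on $\DR_{\log D}V^0\sE$ are the non-coherent sheaves $\Omega^p_X(\log D)\otimes V^0\sE$. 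For such graded complexes the ``alternating sum of cohomology sheaves'' is not an element of $K_0(\sO_X)$, so the filtered quasi-isomorphisms of Section \ref{sec:Ei} yield acyclicity statements but no identity of classes. In particular no coherent $F\sD_X$-filtration of $j_*\sE$ has even been produced at this stage of your argument.

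What actually closes the gap in the paper is a different and substantially harder mechanism. One must (a) exhibit the coherent filtration explicitly as $F_i\sE=\sum_{j+k\leq i}F_j\sD_X\cdot E_k(D)$, i.e.\ as the filtration transported from $\sD_X\otimes_{\sD_X(\log D)}V^{-1}\sE$ via the isomorphism $\sD_X\otimes_{\sD_X(\log D)}V^{-1}\sE\simeq\sE$ of Proposition \ref{prop:isoDDlog}, itself nontrivial and resting on the Euler-operator criterion of Proposition \ref{prop:generalV}; and (b) prove that $[\gr^F\sE]=[\gr^F\sD_X\otimes_{\gr^F\sD_X(\log D)}\gr(V^{-1}\sE)]$ in $K_0(\sO_{T^*X})$, which requires controlling the higher derived terms of $\wsD_X\otimes^L_{\wsD_X(\log D)}\wVsE$ (Proposition \ref{prop:isotilde} combined with Lemma \ref{lem:tensorReesnonstrict}). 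That vanishing is a genuine flatness statement --- $\sD_X$ is not flat over $\sD_X(\log D)$ --- proved by showing that every subsequence of $(x_1,\dots,x_\ell)$ is a regular sequence on the Rees module $\widetilde{V^0\sE}$, which in turn uses the explicit unramified local form $L_\varphi\otimes R_\varphi$ of the Deligne--Malgrange lattice together with the Galois-descent reduction. None of this is supplied, or even signalled, by the quasi-isomorphisms of Theorem \ref{thm:main}; your appeal to ``the defining hypotheses of a good model, as in Theorem \ref{thm:main}'' therefore does not identify the mechanism that makes the $K_0$-identity true, and the proof as written does not go through.
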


The proof rests on the two Appendices by the second named author. Appendix \ref{app:A} is
classical, see e.g.\ \cite{MHM}. Appendix \ref{app:B} is inspired by \cite{Wei17} and gives a criterion for exactness of the tensor product of the differential operators over the ones with logarithmic poles.
\subsubsection*{Acknowledgements} Michael Groechenig asked us whether there is a generalization of Deligne's theory of pairs of good lattices in higher dimension, and once the notion was defined and the good models were constructed, predicted Theorem~\ref{thm:main2}. It is a great pleasure to acknowledge his influence on our note which would not exist without his insight. We thank him profoundly. We thank the referee for remarks and comments which enabled us to improve our manuscript.

\section{The theorem of Kedlaya-Mochizuki}
\label{sec:mochizuki}

Sabbah's conjecture~\cite[Conj.\,2.5.1]{Sab00} over $k=\C$, stipulating the existence of a good {\it formal} structure of $(\sE, \nabla)$ at each closed point of $(X_0\setminus U)$ for a given good compactification $U\hto X_0$, after blow up and ramification, was solved by Mochizuki and Kedlaya, and for the latter, over any characteristic $0$ field. Mochizuki's theorem is summarized in \cite{Moc09} and contains more structure than Sabbah's initial conjecture. It yields a uniquely defined {\it lattice} with certain properties. See the definitions in {\it loc.\ cit.}\ 2.2.2, Remark~2.3, Conjecture~2.11, which is Sabbah's conjecture {\it enhanced} with the existence of good Deligne-Malgrange lattices after blow-up of a given good compactification, and ramification at the formal completion of each closed point at infinity, and see Theorem~2.12 in which Conjecture~2.11 is solved.

We use Kedlaya's more algebraic approach \cite[\S5.3]{Ked11}, that we now recall.
Let $j: U\hto X$ be a good compactification
and $x$ be a finite union of closed points in $ D$. We denote by $X_x$, resp. $D_x$ the spectrum of the semi-local ring $\sO_{X,x}$, resp. $\sO_{D,x}$ of $X$, resp. $D$ at $x$, by $\hat X_x$, resp.\ $\hat D_x$ the formal spectrum of the completion $\hat \sO_{X,x}$, resp.\ $\hat \sO_{D,x}$ with respect to the ideal of
$x$. We set
$$ U_x=X_x\setminus D_x, \ \ \hat U_x=\hat X_x\setminus \hat D_x.$$
and
$$ (\sE_x, \nabla_x) = \sO_{X_x}\otimes_{\sO_X} (j_*\sE , \nabla), \ \ (\hat \sE_x, \hat \nabla_x)=
\sO_{\hat X_x}\otimes_{\sO_X }(j_*\sE, \nabla).
$$
Likewise, for any sheaf $\sF$ of $\sO_X$-modules on $X$, we define $\sF_x$ and $\hat \sF_x$. We also denote by $\hat{\sF_x}$ an $ \sO_{\hat{X_x}}$-module, which is not necessarily defined over $\sO_{X_x}$, and similarly we use the symbol $\hat{(-_x)}$ for an
$\sO_{\hat{X_x}}$-morphism which does not necessarily descend to $\sO_{X,x}$.

If $K\supset k$ is any field extension, we denote by a lower index $_K$ the base change $\otimes_k K$.
Let $\bar k\supset k$ be the choice of an algebraic closure. Let $\tau: \bar k/\Z\to \bar k$ be the admissible section of the projection $\bar k \to \bar k/\Z$ (see Kedlaya's definition \cite[Def.\,5.3.2]{Ked11}), which for $k=\C$ is the one used by Deligne and is characterized by the property
that the real part of the image lies in $[0 \ 1)$.

\begin{definition} \label{defn:goodlattice}
Let $j: U \hto X$ be a good compactification of a smooth quasi-projective geometrically irreducible
variety defined over $k$, let $(\sE,\nabla)$ be a vector bundle with an integrable connection on $U$.
\begin{itemize}
\item[1)]
An {\it unramified good Deligne-Malgrange lattice} $E\subset j_* \sE $ is a lattice such that for every closed point $x\in D = X \setminus U$,
$$\hat E_x= \bigoplus_{\varphi\in \Phi} (L_\varphi \otimes R_{\varphi}), $$
where $R_{\varphi}$ is Deligne's extension of a regular singular connection associated to $\tau$, $\Phi\subset\sO(\hat U_x)$ is a finite set satisfying the properties in \cite[Def.\,3.4.6 (a) \&\ (b)]{Ked11}, and $L_\varphi$ is a purely irregular lattice of a connection of rank $1$, i.e.,
$L_\varphi$ is isomorphic to $\sO_{\hat X_x}$ with $\nabla(1)= d\varphi$. 
\item[2)] A lattice $ E\subset j_* \sE $ is {\it a good Deligne-Malgrange lattice} if, for every closed point $x\in D$,
there exists a finite Galois cover
$\widehat{h_x}: \hat{X'_{x'}} \to \hat X_x $, étale over $\hat U_x$, where $x'$ is a finite union of closed points, with
$ \hat{X'_{x'}} $ formally smooth, of group $G$, a $G$-invariant lattice $$\hat{E'_{x'}} \subset \widehat{ h_x}^* \hat \sE_x$$ such that
for all closed points $y$ in $x'$, the lattice $\widehat{(E'_{x'})}_y$ is an unramified good Deligne-Malgrange lattice and such that
$$\hat E_x = \hat{({E'_{x'}})}{}^{G}.$$
\end{itemize}
\end{definition}

\begin{remark}\mbox{}
\begin{enumerate}
\item
Recall \cite[Th.\,5.3.4]{Ked11} that if a good Deligne-Malgrange lattice $E$ exists on a good compactification $X$ of $U$, it is locally free, and the isomorphism classes of $E$ and~$\hat E_x$ are unique for any closed point $x\in D$. See also the proof of Lemma \ref{lem:Ei}.
\item
The Galois cover $\hat{h_x}$ in 2) could simply be a base change $K\supset k$ necessary to make the geometric irreducible components of $D$ rational over $K$.
\end{enumerate}
\end{remark}

\begin{definition} \label{defn:unram}
The formal integrable connection on $ \hat U_x$ is said to be {\it unramified } if it admits a good Deligne-Malgrange lattice where $\hat{h_x}$ is the base change by a field extension.
A connection $(\sE,\nabla)$ is said to be unramified if $(\hat \sE_x, \hat \nabla_x)$ is unramified for all closed points $x\in D$.
\end{definition}

\begin{thm}[\cite{Moc09}, Theorem~2.12 and Conjecture~2.11 for the definition used in Theorem~2.12, \cite{Moc11}, Section~2.7 and Theorem~16.2.1, \cite{Ked11}, Theorem~8.1.3, Hypotheses~8.1.1, Theorem 8.2.3]
\label{thm:mochizuki} Let $j: U\hto X_0$ be a good compactification of a smooth
variety of finite type defined over a field $k$ of characteristic $0$.
Let $(\sE,\nabla)$ be a vector bundle with a flat connection on $U$. Then there exists a proper birational map $X\to X_0 $, which is an isomorphism on $U$, such that $j: U\hto X$ is a good compactification and the locally free $j_*\sO_U$-sheaf $j_* \sE$ admits a
good Deligne-Malgrange lattice $E$.
\end{thm}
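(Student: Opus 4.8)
The statement quoted is the Kedlaya--Mochizuki elimination of turning points, and a self-contained proof is far beyond the scope of a short note; so the plan is to reduce it to the precise results already available in the literature and to indicate the shape of the argument. First one observes that the construction of the modification $X\to X_0$ and of the lattice $E$ is local on $X_0$ along $D_0=X_0\setminus U$, and in fact can be read off at the formal completion $\hat\sO_{X_0,x}$ at each closed point $x\in D_0$. The decisive input is the \emph{good formal structure theorem}: after a finite sequence of blow-ups with smooth centers contained in the boundary (chosen so that the total transform of $D_0$ stays a strict normal crossings divisor, hence so that $U\hto X$ remains a good compactification) and, on the formal completions, a ramified cover along the components of the new boundary, the formal connection $(\hat\sE_x,\hat\nabla_x)$ acquires a good formal decomposition $\bigoplus_\varphi (L_\varphi\otimes R_\varphi)$ with $L_\varphi$ purely irregular of rank one, $R_\varphi$ regular singular, and $\{\varphi\}$ a good set in the sense of \cite[Def.\,3.4.6]{Ked11}. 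This is Sabbah's conjecture \cite[Conj.\,2.5.1]{Sab00}, proved analytically over $\C$ by Mochizuki \cite{Moc09,Moc11} and algebraically over any characteristic-zero field by Kedlaya \cite{Ked11}.

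Granting the good formal structure, the lattice $E$ is produced as follows. At each closed point $x$ of the new boundary $D$ one forms the canonical candidate $\hat E_x$: after the ramified cover $\hat h_x$ one takes the direct sum $\bigoplus_\varphi L_\varphi\otimes R_\varphi$ with $R_\varphi$ the $\tau$-Deligne extension, and then the $G$-invariants; this is precisely the good Deligne--Malgrange lattice of Definition~\ref{defn:goodlattice}. Because these formal lattices are canonically attached to $(\hat\sE_x,\hat\nabla_x)$, they agree on overlaps, so they glue to an $\sO_X$-coherent subsheaf $E\subset j_*\sE$; Kedlaya's algebraization statement \cite[Th.\,5.3.4]{Ked11} (together with the discussion of \cite[\S5.3]{Ked11}) then shows that such an $E$ exists over $\sO_X$, is locally free, and that its isomorphism class and those of the $\hat E_x$ are unique. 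The Galois descent along the ramified covers is built into Definition~\ref{defn:goodlattice}(2) itself, so no further work is needed on that point.

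The main obstacle — and the reason the proof has to be imported wholesale rather than reconstructed — is the elimination of turning points in dimension $n\geq 2$. In the curve case the Levelt--Turrittin theorem gives the formal decomposition outright; in higher dimension the decomposition type of $(\hat\sE_x,\hat\nabla_x)$ jumps along proper closed subsets of $D$, and one must show that these turning loci can be removed by finitely many admissible blow-ups. Controlling this requires a valuation-theoretic analysis of how the irregularity (the Newton polygon along the boundary) varies, exhibiting a numerical invariant that strictly decreases under a well-chosen blow-up; this is the technical heart of \cite{Moc11} and \cite{Ked11}, and here we simply invoke their conclusion as quoted.
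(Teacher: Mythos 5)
Your reduction is in substance the same as the paper's: Theorem \ref{thm:mochizuki} is imported from Mochizuki and Kedlaya, and the paper likewise offers no independent proof of the elimination of turning points or of the existence, uniqueness and local freeness of the formal lattices. There is, however, one concrete step that the paper does supply and that your write-up omits, and without it the statement as formulated is not yet established. A good compactification in the sense of this paper requires $X$ to be smooth \emph{projective}, whereas the modification $X\to X_0$ produced by Kedlaya is only proper over $X_0$ (he does not insist on projectivity when $U$ is quasi-projective), and Mochizuki's analytic argument is set over $\C$. The paper fixes this by taking $X_0$ projective, choosing a further proper modification $X'\to X$ such that $X'$ is projective and $U\hto X'$ is still a good compactification, and then invoking the covariance of Deligne--Malgrange lattices under proper modifications (\cite[Rem.\,5.3.7, 2nd part]{Ked11}, with the corrected formula $f_*\sE'_0\cap\sE=\sE_0$) to transport the good Deligne--Malgrange lattice to $X'$. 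Your argument stops at the raw output of the Kedlaya--Mochizuki theorem, so you should add this final descent/covariance step. Everything else in your proposal --- good formal structure after blow-up and ramification, $\tau$-Deligne extensions, Galois invariants, and algebraization plus local freeness and uniqueness via \cite[Th.\,5.3.4]{Ked11} (which is the correct reference for the ``gluing'' you gesture at; formal completions at points do not literally form a cover one glues over) --- agrees with what the paper relies on.
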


Mochizuki's proof is analytic, and holds for meromorphic connections on
complex projective varieties as well, while Kedlaya's proof is algebraic and rational over the field of definition. The latter does not insist on the projectivity of $X$ when $U$ is assumed to be quasi-projective. However, they both construct $X$ starting from a given $X_0$ as above. Taking $X_0$ to be projective, then their $X$, which is proper over $X_0$, admits a proper modification $X'\to X$ such that $U\hto{} X'$ is a good compactification and $X'$ is projective. Then one applies the covariance of Deligne-Malgrange lattices, see e.g.\ \cite[Rem.\,5.3.7, 2nd part]{Ked11}, where the formula should read $f_*\sE'_0\cap\sE=\sE_0$.
We remark that if $(\sE, \nabla)$ is regular singular, then $E$ is Deligne's lattice \cite[Prop.\,5.4]{Del70} and that, over the complex numbers, good Deligne-Malgrange lattices coincide with the canonical lattices constructed in \cite{Mal96}.

\section{Proof of Theorem \ref{thm:main}} \label{sec:Ei}
The purpose of this section is to prove Theorem~\ref{thm:main}. We fix an effective divisor $\Delta$ with support in $D$.

Let $\Theta_X(\log D)$ be the sheaf of $D$-logarithmic vector fields on $X$. Define inductively the sequence $E_i$ of $\sO_X$-coherent subsheaves of $j_*\sE$ by the following rule:
\begin{itemize}
\item[1)]$E_0=E$;
\item[2)] For any natural number $i\geq0$, $E_{i+1}$ is the $\sO_X$-coherent subsheaf of $j_*\sE$ generated over $\sO_X$ by $E_i$ and $\nabla_\xi E_i$ for any local section $\xi$ of $\Theta_X(\log D)$.\end{itemize}
We also denote it by $E_i+\Theta_X(\log D)E_i$. Here $\Theta_X(\log D)E_i$ is understood as the $\sO_X$-coherent subsheaf of $j_*\sE$ spanned by the $\nabla_\xi E_i$.

By definition one has, for any divisor $\Delta$ with support in $D$,
\begin{itemize}
\item[1)] $E_i(\Delta)\subset j_*\sE$ and $\sE=j^*(E_i(\Delta))$ for each natural number $i\ge 0$;
\item[2)] The connection $j_*\nabla: j_*\sE\to \Omega^1_X(\log D)\otimes_{\sO_X} j_*\sE$ restricts for each natural number $i\ge 0$ to $\nabla: E_i(\Delta)\to \Omega^1_X(\log D)\otimes_{\sO_X} E_{i+1}(\Delta)$ defining the complex
$$\DR_{\log D}E_\bullet(\Delta):= \Omega_X^\bullet(\log D)\otimes_{\sO_X} E_\bullet(\Delta).$$
\end{itemize}

If $(\sE, \nabla)$ is regular singular, then $E_i=E_0$ and is locally free for all $i\ge 0$ by \cite{Del70}, {\it loc.\,cit.}

For a closed point $x\in D$, we do on $\hat X_x$ the analogous construction:
\begin{itemize}
\item[1)]$\widehat{ E_{0,x} }= (\widehat{E_0})_x$;
\item[2)] For any natural number $i\geq0$, $\widehat{E_{i+1,x}}$ is the $\sO_{\hat X_x}$-coherent subsheaf of $\hat \sE_x$ generated over $\sO_{\hat X_x}$ by $\widehat{E_{i,x}}$ and $\nabla_\xi \widehat{E_{i,x}}$ for any local section $\xi$ of $\Theta_{\hat X_x}(\log \hat D_x)$.
\end{itemize}

\begin{lemma}\label{lem:Ei}
For any closed point $x\in X$, one has $\widehat{ E_{i,x}}= (\widehat{E_i})_x$ and the $\sO_X$-coherent sheaves $E_i$ are all locally free.
\end{lemma}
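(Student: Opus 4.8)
The plan is to prove the two assertions in turn. First I would establish $\widehat{E_{i,x}}=(\widehat{E_i})_x$ by induction on $i$, the case $i=0$ being the definition $\widehat{E_{0,x}}=(\widehat{E_0})_x$. Completion of $\sO_X$-modules along the ideal of a closed point $x$ is exact on coherent sheaves, commutes with images and finite sums, and carries the locally free sheaf $\Theta_X(\log D)$ to $\Theta_{\hat X_x}(\log\hat D_x)$ (both are free of rank $n$, with basis the logarithmic vector fields attached to local equations of $D$, which complete to local equations of $\hat D_x$). Locally near $x$, $E_{i+1}$ is generated over $\sO_X$ by $E_i$ together with the finitely many $\nabla_{\xi_a}e_b$, where $(\xi_a)$ is a frame of $\Theta_X(\log D)$ and $(e_b)$ generate $E_i$; completing, using the inductive hypothesis, and noting (by the Leibniz rule) that $\Theta_{\hat X_x}(\log\hat D_x)\cdot\widehat{E_{i,x}}$ is generated over $\sO_{\hat X_x}$ by the $\nabla_{\xi_a}e_b$, one identifies $(\widehat{E_{i+1}})_x$ with $\widehat{E_{i+1,x}}$. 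Outside $D$ one has $E_i=\sE$, so there is nothing more to check there.

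For local freeness, set $r=\operatorname{rk}\sE$; I would reduce to showing that $\widehat{E_{i,x}}$ is free over $\sO_{\hat X_x}$ for every closed $x\in D$. Granting this, $\widehat{E_{i,x}}$ has rank $r$, being a lattice of $\hat\sE_x$, so $(E_i)_x\otimes\kappa(x)\cong\widehat{E_{i,x}}\otimes\kappa(x)$ has dimension $r$; lifting a basis gives, by Nakayama, a surjection $\sO_{X,x}^{\,r}\twoheadrightarrow(E_i)_x$ whose completion is a surjective endomorphism of $\sO_{\hat X_x}^{\,r}$, hence an isomorphism, so by faithful flatness of $\sO_{\hat X_x}$ over $\sO_{X,x}$ the map $\sO_{X,x}^{\,r}\to(E_i)_x$ is an isomorphism. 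Thus $E_i$ is free of rank $r$ at every closed point; as the non-locally-free locus of a coherent sheaf is closed and a nonempty closed subset of a variety contains a closed point, $E_i$ is locally free.

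It remains to prove that $\widehat{E_{i,x}}$ is free. I would use the Galois cover $\hat{h_x}\colon\hat{X'_{x'}}\to\hat X_x$ of Definition~\ref{defn:goodlattice}: being finite Galois, étale over $\hat U_x$, and tame (as $k$ has characteristic $0$), it has the local structure of a Kummer cover along $\hat D_x$, so that $\sO_{\hat{X'_{x'}}}$ is finite free over $\sO_{\hat X_x}$ and $\Theta_{\hat{X'_{x'}}}(\log\hat{D'_{x'}})=\sO_{\hat{X'_{x'}}}\cdot\hat{h_x}^*\Theta_{\hat X_x}(\log\hat D_x)$. Let $\widehat{E'_{i,x'}}$ be the analogous tower on $\hat{X'_{x'}}$ built from the unramified good Deligne--Malgrange lattice $\widehat{E'_{0,x'}}$. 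I claim $\widehat{E_{i,x}}=(\widehat{E'_{i,x'}})^G$ inside $\hat\sE_x=(\hat{\sE'_{x'}})^G$, and I would prove it by induction on $i$: $\subseteq$ is immediate since $\hat{h_x}$ pulls logarithmic vector fields to logarithmic vector fields; for $\supseteq$, write a $G$-invariant section of $\widehat{E'_{i+1,x'}}$ as $a+\sum_j b_j\nabla_{\hat{h_x}^*\xi_j}c_j$ with $(\xi_j)$ a frame of $\Theta_{\hat X_x}(\log\hat D_x)$, apply the averaging projector $|G|^{-1}\sum_{g\in G}g$, and move each $\nabla_{\hat{h_x}^*\xi_j}$ out of the average by the Leibniz rule --- legitimate because $\hat{h_x}^*\xi_j$ is $G$-invariant --- which exhibits the section in $\widehat{E_{i,x}}+\Theta_{\hat X_x}(\log\hat D_x)\widehat{E_{i,x}}=\widehat{E_{i+1,x}}$. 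Now in the unramified situation the decomposition $\widehat{E'_{0,x'}}=\bigoplus_\varphi(L_\varphi\otimes R_\varphi)$ is by connections, hence preserved by the tower construction, which decomposes accordingly; on each summand a direct induction --- using that $R_\varphi$ is stable under logarithmic derivations \cite{Del70} and that the goodness conditions of \cite[Def.\,3.4.6]{Ked11} force the fractional ideal generated by $1$ and the $\xi(\varphi)$, $\xi$ logarithmic, to be $\sO(\delta_\varphi)$ with $\delta_\varphi$ the polar divisor of $\varphi$ --- gives $(L_\varphi\otimes R_\varphi)(i\delta_\varphi)$ for the $i$-th term, which is free. Hence $\widehat{E'_{i,x'}}$ is free over $\sO_{\hat{X'_{x'}}}$, therefore free of rank $r\,|G|$ over $\sO_{\hat X_x}$; in characteristic $0$ its invariant part $\widehat{E_{i,x}}$ is the image of the averaging idempotent, so a direct summand of a free module over the regular local ring $\sO_{\hat X_x}$, hence free.

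The delicate point is the identity $\widehat{E_{i,x}}=(\widehat{E'_{i,x'}})^G$ in the ramified case: the pullback $\hat{h_x}^*\widehat{E_{0,x}}$ is in general strictly smaller than $\widehat{E'_{0,x'}}$, so one cannot simply transport the tower construction along the cover, and the inclusion $\supseteq$ really requires the averaging-plus-Leibniz manipulation, which succeeds only because pulled-back logarithmic vector fields are $G$-invariant. All the rest is the routine behaviour of formal completion and faithful flatness, together with the explicit local normal form of good formal connections.
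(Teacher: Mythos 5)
Your proof is correct and follows essentially the same route as the paper: completion commutes with the inductive construction by flat base change, local freeness is checked formally and descends by faithful flatness, and the formal computation reduces via Definition~\ref{defn:goodlattice}~2) to the unramified case, where the explicit formula $\widehat{E_{i,x}}=\bigoplus_\varphi L_\varphi(i\cdot I_\varphi)\otimes R_\varphi$ gives freeness. The one point you elaborate beyond the paper's terse proof --- the identity $\widehat{E_{i,x}}=(\widehat{E'_{i,x'}})^G$ via averaging over $G$ and the Leibniz rule applied to the $G$-invariant pulled-back logarithmic vector fields --- is exactly the argument the paper supplies only later (in the proofs of Propositions~\ref{prop:DRlogEV0} and~\ref{prop:beta}), so your making it explicit here is a welcome but not divergent addition.
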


\begin{proof}
As $\sO_X\to \sO_{\hat X_x}$ is flat, the second part of Lemma~\ref{lem:Ei} follows from the first part and $\widehat{E_{i,x}}$ being locally free.
We argue by induction in $i\ge 0$.
We first assume $i=0$. Then the first part $\widehat{ E_{0,x}}= (\widehat{E_0})_x$ is by definition.
By Definition~\ref{defn:goodlattice} 2), $(\widehat{E_0})_x$ is locally free if and only if $(\widehat{E'_0})_x$ is. So we are reduced to the unramified case, in which case this is part of Definition~\ref{defn:goodlattice} 1).
We now assume $i +1\ge 1$ and the statement for $i$.
By Lemma~\ref{lem:basic}, Lemma~\ref{lem:Ei} is equivalent to $(\widehat{ E_i} )_x + \widehat{ (\Theta_X(\log D)E_i)}_x$ being locally free. On the other hand, by definition
$\widehat{ (\Theta_X(\log D)E_i)}_x =\widehat{ \Theta_X(\log D)}_x (\widehat{E_i})_x$. Thus one has
$\widehat {E_{i+1, x}}= (\widehat{E_{i+1}})_x$.
It remains to prove local freeness.
Again by Definition~\ref{defn:goodlattice} 2) it follows from local freeness in the case of the existence of an unramified good Deligne-Malgrange lattice. Then if $I_\varphi$ denotes the effective pole divisor of $\varphi$,
$$ \widehat{E_{i,x}}=\bigoplus_{\varphi\in\Phi} (L_\varphi (i\cdot I_\varphi) \otimes R_{\varphi}).$$
This finishes the proof.
\end{proof}

\begin{lemma}\label{lem:basic}
Let $R\subset R'$ be a flat extension of commutative rings. Let $A,B,C$ be $R$-modules with $A,B\subset C$. Let $A'=A\otimes_R R'$ and similarly for $B, C$ be the base changed modules. Then
$A',B'\subset C'$,
$A'\cap B'=(A\cap B)'\subset C'$ and
$A'+B'=(A+B)'\subset C'$.
\end{lemma}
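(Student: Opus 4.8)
The plan is to realize all the modules as kernels of linear maps and then invoke flatness. First I would handle the inclusions: the maps $A \hookrightarrow C$ and $B \hookrightarrow C$ are injective, and tensoring with the flat $R$-algebra $R'$ preserves injectivity, so $A' \hookrightarrow C'$ and $B' \hookrightarrow C'$, which justifies writing $A', B' \subset C'$. From now on all four base-changed modules $A', B', A'\cap B', A+B$ are viewed as submodules of $C'$ via these injections, and one must check the two submodules of $C'$ produced by the two natural constructions agree.

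For the intersection, I would use the left-exact sequence
\[
0 \to A\cap B \to A\oplus B \xrightarrow{\ (a,b)\mapsto a-b\ } C,
\]
whose exactness is just the definition of $A\cap B$ inside $C$. Tensoring with $R'$ is exact on the left here because $R'$ is flat, so
\[
0 \to (A\cap B)' \to A'\oplus B' \xrightarrow{\ (a',b')\mapsto a'-b'\ } C'
\]
is exact; but the kernel of the second map, computed inside $C'$, is by definition $A'\cap B'$. Comparing, $(A\cap B)' = A'\cap B'$ as submodules of $C'$. (One should note the identification $(A\oplus B)' = A'\oplus B'$ and the compatibility of the difference map with base change, both immediate.)

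For the sum, I would use the right-exact sequence $A\oplus B \xrightarrow{(a,b)\mapsto a+b} C \to C/(A+B) \to 0$, or more directly observe that $A+B$ is the image of the map $A\oplus B \to C$; since base change is always right exact and commutes with taking images (it is a cokernel of $A\cap B \to A\oplus B$, hence the previous computation already pins it down), we get that $(A+B)'$ equals the image of $A'\oplus B' \to C'$, which is exactly $A'+B'$. Equivalently, one tensors $0 \to A+B \to C \to C/(A+B)\to 0$ with $R'$ and uses flatness to keep it exact, identifying $(A+B)'$ with a submodule of $C'$, and then checks this submodule is generated by the images of $A'$ and $B'$, i.e.\ is $A'+B'$.

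I do not expect a genuine obstacle: the only point requiring a little care is making all the identifications inside the fixed ambient module $C'$ consistent — i.e.\ checking that the injection $(A\cap B)' \hookrightarrow C'$ coming from the left-exact sequence is the same as the composite $(A\cap B)' \to A' \to C'$, and similarly for the sum — but this is a routine diagram chase using functoriality of $-\otimes_R R'$.
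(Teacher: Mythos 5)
Your proof is correct: realizing $A\cap B$ as the kernel of $A\oplus B\to C$ and $A+B$ as its image, and then using flatness to preserve the left-exactness (resp.\ right-exactness plus injectivity into $C'$), is the standard argument. The paper states this lemma without proof, treating it as a routine fact of commutative algebra, so there is nothing further to compare; your write-up, including the care about making the identifications inside $C'$, is exactly what a complete proof would look like.
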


\begin{notations}
We define $\sD_X(\log D) \subset \sD_X$ to be the sheaf of subalgebras spanned by $\sO_X$ and $\Theta_X(\log D)$ and set $$V^0\sE= \textstyle\bigcup_{i\in \N} E_i {}\subset (V^0\sE)(\Delta)= \textstyle\bigcup_{i\in \N} E_i(\Delta)\subset j_*\sE.$$
\end{notations}

\begin{facts} \label{lem:V0Ei} By definition, one has
$$ \sD_X(\log D)\cdot E_0 (\Delta) =(V^0\sE)(\Delta) , \ \ \ \sD_X(\log D) \cdot V^0\sE (\Delta) =V^0\sE (\Delta),$$ defining
$$\DR_{\log D}V^0 \sE (\Delta)=\Omega^\bullet_X(\log D)\otimes V^0\sE (\Delta)$$
and the embedding of complexes
$$ \DR_{\log D} E_\bullet (\Delta) \To{\alpha} \DR_{\log D}V^0 \sE (\Delta)\To{\beta} \DR(j_*\sE),$$
where $\DR(j_*\sE)=\Omega^\bullet_X\otimes_{\sO_X} j_*\sE =\DR_{\log D}(j_*\sE)=\Omega^\bullet_X(\log D)\otimes j_*\sE $.
The $\sD_X(\log D)$-module $V^0\sE(\Delta)$ is coherent.
Moreover, $(\sE, \nabla)$ is regular singular if and only if $V^0\sE(\Delta)=E_0(\Delta)$, if and only if
$V^0\sE(\Delta)$ is a $\sO_X$-coherent subsheaf of $j_*\sE$.
\end{facts}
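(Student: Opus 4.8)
The plan is to unwind the definitions and reduce the only nontrivial assertion --- the equivalence of regular singularity with the finiteness of the filtration $E_\bullet$ --- to the local structure theory recalled in Section~\ref{sec:mochizuki}. The identity $\sD_X(\log D)\cdot E_0(\Delta)=(V^0\sE)(\Delta)$ is immediate: $\sD_X(\log D)$ is generated by $\sO_X$ and $\Theta_X(\log D)$, so $\sD_X(\log D)\cdot E_0$ is the increasing union of the subsheaves obtained by applying logarithmic vector fields repeatedly, which is exactly $\bigcup_i E_i=V^0\sE$; tensoring by $\sO_X(\Delta)$ (a flat operation commuting with the union) gives the version with $(\Delta)$, and then $\sD_X(\log D)\cdot V^0\sE(\Delta)=V^0\sE(\Delta)$ follows since $V^0\sE(\Delta)$ is already stable under $\sO_X$ and $\Theta_X(\log D)$ by construction. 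The connection $j_*\nabla$ sends $E_i(\Delta)$ into $\Omega^1_X(\log D)\otimes E_{i+1}(\Delta)$ by definition of $E_{i+1}$, hence sends $V^0\sE(\Delta)$ into $\Omega^1_X(\log D)\otimes V^0\sE(\Delta)$, which produces the complex $\DR_{\log D}V^0\sE(\Delta)$ and, level by level, the inclusions $\alpha$ and $\beta$ of complexes. Coherence of $V^0\sE(\Delta)$ over $\sD_X(\log D)$ is the statement that it is generated by the coherent $\sO_X$-module $E_0(\Delta)$, which is exactly the first identity.

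For the regular-singular characterization I would argue as follows. If $V^0\sE(\Delta)=E_0(\Delta)$ then $V^0\sE(\Delta)$ is visibly $\sO_X$-coherent; conversely, if $V^0\sE(\Delta)=\bigcup_i E_i(\Delta)$ is $\sO_X$-coherent, then by the Noetherian property the ascending chain $E_i(\Delta)$ stabilizes, say $E_N(\Delta)=E_{N+1}(\Delta)=\cdots$, and since $E_0(\Delta)\subset\cdots\subset E_N(\Delta)$ is then $\Theta_X(\log D)$-stable we get $E_N(\Delta)=E_0(\Delta)$ too --- wait, that last implication needs care, so more precisely: once the chain is stationary at $E_N$, the lattice $E_N(\Delta)$ is itself a locally free $\Theta_X(\log D)$-stable lattice, which by the uniqueness and minimality properties of the Deligne--Malgrange lattice (Theorem~\ref{thm:mochizuki} and the remark following Definition~\ref{defn:goodlattice}) forces the connection to have at worst regular singularities along $D$. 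For the remaining direction, if $(\sE,\nabla)$ is regular singular then by \cite[Prop.\,5.4]{Del70} the Deligne lattice $E_0$ is already $\Theta_X(\log D)$-stable, so $E_1=E_0$ and inductively $E_i=E_0$ for all $i$, whence $V^0\sE(\Delta)=E_0(\Delta)$, which is $\sO_X$-coherent. To make the ``conversely'' direction fully rigorous it is cleanest to pass to the formal completion $\hat X_x$ at each closed point $x\in D$ and invoke Lemma~\ref{lem:Ei}: there, in the unramified case, $\widehat{E_{i,x}}=\bigoplus_{\varphi\in\Phi}L_\varphi(i\cdot I_\varphi)\otimes R_\varphi$, so the chain is stationary precisely when every $\varphi\in\Phi$ has empty pole divisor $I_\varphi$, i.e.\ when all the $\varphi$ are in $\sO_{\hat X_x}$ --- equivalently when $\hat\sE_x$ is a successive extension of regular connections, i.e.\ regular singular at $x$; the ramified case reduces to the unramified one by Galois descent as in Definition~\ref{defn:goodlattice}~2), since regular singularity is insensitive to finite pullback.

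The main obstacle is precisely this last point: one must be sure that ``$V^0\sE(\Delta)$ is $\sO_X$-coherent'' is equivalent to ``$E_\bullet$ is eventually stationary'' and that the stationary value then detects regular singularity correctly at every point of $D$ simultaneously. The $\sO_X$-coherent $\Leftrightarrow$ stationary equivalence is a soft Noetherian argument (a nested increasing union of coherent subsheaves of a coherent sheaf is coherent iff it stabilizes Zariski-locally, and by quasi-compactness of $X$ one can make this uniform). The genuinely substantive input is the formal computation $\widehat{E_{i,x}}=\bigoplus_\varphi L_\varphi(i\cdot I_\varphi)\otimes R_\varphi$ from the proof of Lemma~\ref{lem:Ei}, together with the fact that the pole divisor $I_\varphi$ is nonzero exactly when $\varphi\notin\sO_{\hat X_x}$, i.e.\ exactly at a genuine turning-point datum; everything else is bookkeeping with flat base change (Lemma~\ref{lem:basic}) and the definitions.
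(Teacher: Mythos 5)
Your proposal is correct and matches the paper's (essentially definitional) justification: the paper states these Facts with no separate proof, relying on exactly the ingredients you invoke, namely the remark that $E_i=E_0$ in the regular singular case via \cite[Prop.\,5.4]{Del70} and the formula $\widehat{E_{i,x}}=\bigoplus_{\varphi}L_\varphi(i\cdot I_\varphi)\otimes R_\varphi$ from the proof of Lemma~\ref{lem:Ei} for the converse. The only cosmetic point is that your appeal to ``uniqueness and minimality of the Deligne--Malgrange lattice'' is not quite the right reference for why a stationary, $\Theta_X(\log D)$-stable lattice forces regularity --- the relevant fact is Deligne's criterion that the existence of a coherent logarithmic lattice characterizes regular singularity --- but your subsequent formal-local argument via the pole divisors $I_\varphi$ supplies a correct proof of exactly that point.
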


The rest of the section is devoted to prove
\begin{thm} \label{thm:qis}
Both $\alpha$ and $\beta$ are quasi-isomorphisms.
\end{thm}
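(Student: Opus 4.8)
\emph{Overview and reduction.}
The plan is to reduce Theorem~\ref{thm:qis} to a statement over the formal completion at a closed point, then to an explicit elementary model, and there to build the ``specific filtrations'' of the logarithmic de Rham complexes whose associated gradeds are $\sO_X$-linear and can be analysed by hand. Both $\alpha$ and $\beta$ are morphisms of complexes of $\sO_X$-modules which are isomorphisms over $U$, so by faithful flatness of $\sO_X\to\sO_{\hat X_x}$ (Lemma~\ref{lem:basic}) together with Lemma~\ref{lem:Ei} it suffices to prove they are quasi-isomorphisms after completion at each closed point $x\in D$. There, Definition~\ref{defn:goodlattice}(2), exactness of the $G$-invariants functor in characteristic $0$, and the identity $\Theta_{\hat X'_{x'}}(\log\hat D'_{x'})=\sO_{\hat X'_{x'}}\otimes_{\sO_{\hat X_x}}\Theta_{\hat X_x}(\log\hat D_x)$ for the cover $\widehat{h_x}$ (a ramification $t_j\mapsto t_j'^{\,e_j}$ carries $t_j\partial_{t_j}$ to $e_j^{-1}t_j'\partial_{t_j'}$) reduce to the unramified case; and the decomposition $\widehat{E_{i,x}}=\bigoplus_{\varphi\in\Phi}\bigl(L_\varphi(i\cdot I_\varphi)\otimes R_\varphi\bigr)$ from the proof of Lemma~\ref{lem:Ei}, together with additivity of $\DR_{\log D}$ and of mapping cones, reduces to $\Phi=\{\varphi\}$, i.e. to $\hat E_x=L_\varphi\otimes R_\varphi$ with $L_\varphi$ elementary of rank one and $R_\varphi$ the Deligne extension of a regular singular connection.

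\emph{The morphism $\alpha$.}
Equip $\DR_{\log D}V^0\sE(\Delta)$ with the increasing filtration $F_p$ having degree-$q$ term $\Omega^q_X(\log D)\otimes_{\sO_X}E_{p+q}(\Delta)$; condition~2 of Definition~\ref{dfn:goodmodel} makes this a filtration by subcomplexes, one has $F_0=\DR_{\log D}E_\bullet(\Delta)$, $\bigcup_pF_p=\DR_{\log D}V^0\sE(\Delta)$ and $\alpha$ is the inclusion, so since cohomology commutes with filtered colimits it is enough to show $\gr^F_p$ is acyclic for every $p\ge1$. Now $\gr^F_p$ is $\sO_X$-linear: modulo $F_{p-1}$ the term $d\omega\otimes v$ of the differential dies, so the induced differential is, up to sign, the $\sO_X$-linear symbol $\bar\nabla\colon\gr^E_i\to\Omega^1_X(\log D)\otimes\gr^E_{i+1}$ with $\gr^E_i=E_i(\Delta)/E_{i-1}(\Delta)$; since $E_\bullet(\Delta)$ is a good filtration of the coherent $\sD_X(\log D)$-module $V^0\sE(\Delta)$ (generated by $E_0(\Delta)$), the total complex $\bigoplus_p\gr^F_p$ is the Koszul complex of the graded $\gr^F\sD_X(\log D)=\mathrm{Sym}_{\sO_X}\Theta_X(\log D)$-module $\bigoplus_i\gr^E_i$, i.e. computes its derived restriction to the zero section of the logarithmic cotangent bundle, and $\gr^F_p$ is its graded-degree-$p$ part. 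In the elementary model one writes $I_\varphi=\sum_ja_jD_j$ and splits the coordinates at $x$ into the indices with $a_j\ge1$, along which $\bar\nabla$ applied to $t_j\partial_{t_j}$ is, on the graded, multiplication by the unit $-a_j$ up to terms of lower pole order (here characteristic $0$ is used), and the remaining directions, along which $E_\bullet$ does not grow and one recovers Deligne's regular-singular computation \cite{Del70}; a Koszul computation combining the two, using the explicit shape $\widehat{E_{i,x}}=L_\varphi(i\cdot I_\varphi)\otimes R_\varphi$, then yields acyclicity of $\gr^F_p$ for $p\ge1$, whence $\alpha$ is a quasi-isomorphism.

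\emph{The morphism $\beta$.}
Since $\bigcup_N V^0\sE(\Delta+ND)=j_*\sE$ and $j_*\sE$ is meromorphic along $D$, one has $\Omega^\bullet_X(\log D)\otimes j_*\sE=\Omega^\bullet_X\otimes j_*\sE=\DR(j_*\sE)$; hence $\DR(j_*\sE)=\varinjlim_N\DR_{\log D}\bigl(V^0\sE(\Delta+ND)\bigr)$ and $\beta$ is the canonical map into this colimit, so it suffices to show that the inclusion $\DR_{\log D}V^0\sE(\Delta)\hookrightarrow\DR_{\log D}V^0\sE(\Delta+D)$ has acyclic quotient complex. Filtering that quotient by the $E$-index as above produces $\sO_X$-linear Koszul complexes with graded $\bigoplus_i\bigl(\gr^E_i(D)/\gr^E_i\bigr)$, which in the directions where $\varphi$ has a pole are handled by the computation of the previous paragraph and in the remaining directions reduce to Deligne's regular-singular comparison — concretely, that $\Omega^\bullet_X(\log D)\otimes\sO_X(mD_j)\hookrightarrow\Omega^\bullet_X(\log D)\otimes\sO_X((m{+}1)D_j)$ is a quasi-isomorphism because the residue along $D_j$ lies in $[0,1)$ shifted down by an integer and is therefore never a positive integer. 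Iterating over the components of $D$ and passing to the colimit gives that $\beta$ is a quasi-isomorphism.

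\emph{Main obstacle.}
The crux is the acyclicity of the $\sO_X$-linear complexes $\gr^F_p$ for $p\ge1$ in the elementary model — equivalently, a precise description of the principal symbol of $\nabla$ on the graded lattices, i.e. of the logarithmic characteristic variety of $V^0\sE(\Delta)$ — and the parallel statement for the quotient complex entering $\beta$. This is exactly where the full force of the Kedlaya--Mochizuki normal form (the shape of $\Phi$, the pole divisors $I_\varphi$, and their compatibility with $\Theta_X(\log D)$) is needed, and the bookkeeping of the Koszul differentials in mixed ``irregular plus regular plus transverse'' coordinates is the only genuinely technical part of the argument.
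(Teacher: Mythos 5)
Your treatment of $\alpha$ coincides with the paper's: the same filtration $F_q$ with terms $\Omega^p_X(\log D)\otimes E_{q+p}(\Delta)$, the same reduction (completion, $G$-invariants, one $\varphi$ at a time), and the same key computation that on the graded pieces $x_i\partial_{x_i}$ acts as multiplication by the unit $-m_i$ whenever $m_i\neq0$, so the graded Koszul complex is acyclic for $q\geq1$; this is exactly Proposition~\ref{prop:DRlogEV0}. For $\beta$ you take a genuinely different route. The paper first algebraizes the formal Galois cover (Lemma~\ref{lem:alg}, via Kawamata) to reduce $\beta_x$ to the unramified case over the local ring (Proposition~\ref{prop:beta}), and then compares the stupid filtration on $\DR_{\log D}V^0$ with Deligne's filtration $P^\bullet$ by the order of the pole on $\DR(j_*\sE)$, proving a filtered quasi-isomorphism (Theorem~\ref{thm:P}) by writing each $L_\varphi\otimes R_\varphi$ with $|I_\varphi|\subsetneq D$ as an external product of a full-pole-divisor factor and a regular factor and quoting \cite[Prop.\,II.3.13 and p.\,81]{Del70}. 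You instead exhaust $\DR(j_*\sE)=\varinjlim_N\DR_{\log D}V^0\sE(\Delta+ND)$ and show each step is a quasi-isomorphism by analysing the quotient; both routes ultimately rest on the same two local inputs (multiplication by $-m_i$ in the irregular directions, and invertibility of $\mathrm{res}_{D_j}\nabla-k$ for $k\geq1$, guaranteed by admissibility of $\tau$, in the remaining ones), and your colimit argument trades the external-product bookkeeping for having to control quotients of $V^0\sE$. Two points need to be made explicit for your version to close. First, you must add one component $D_j$ at a time: $V^0\sE(\Delta+D)/V^0\sE(\Delta)$ is only annihilated by $\sO_X(-D)$, so its induced differential is not $\sO_X$-linear, whereas for a single $D_j$ the quotient is killed by $x_j$, the operator $\nabla_{x_j\partial_{x_j}}$ becomes $\sO_X$-linear on it, and the quotient complex is the cone of this commuting, invertible operator on the sub-Koszul complex omitting $dx_j/x_j$. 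Second, in the reduction to the unramified case, $\hat{E'_{x'}}$ is not $\hat{h_x}^*\hat E_x$ but only satisfies $(\hat{E'_{x'}})^G=\hat E_x$, so the comparison must be phrased as taking $G$-invariants of the entire upstairs complex, using exactness of $(-)^G$ in characteristic $0$ and the projection formula for the free module $\Omega^p(\log D')=\hat{h_x}^*\Omega^p(\log D)$ — this is the content the paper packages into Proposition~\ref{prop:beta}. With these two points spelled out, your plan goes through and gives a valid alternative to the paper's proof of the $\beta$ half.
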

Clearly this immediately implies Theorem~\ref{thm:main}.

We first treat $\alpha$. To this aim we regard the left-hand side of $\alpha$ as the zeroth term of the increasing filtration of $\DR_{\log D}V^0\sE$ defined by
\[
F_q\DR_{\log D}V^0\sE (\Delta)=
\begin{cases}
\{0\ra E_q (\Delta)\ra\Omega_X^1(\log D)\otimes E_{q+1} (\Delta)\ra\cdots\}&\text{if }q\geq0,\\
\phantom{\{}0&\text{if }q\leq-1.
\end{cases}
\]
So Theorem~\ref{thm:qis} for $\alpha$ is equivalent to the following proposition.

\begin{proposition}\label{prop:DRlogEV0}
For every $q\geq1$, the graded complex $\gr_q^F\DR_{\log D}V^0\sE (\Delta)$ has $\sO_X$-linear differentials and is quasi-isomorphic to zero.
\end{proposition}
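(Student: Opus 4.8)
The plan is to work locally and reduce everything to the unramified model at a closed point $x\in D$, where by Lemma~\ref{lem:Ei} we have the explicit decomposition $\widehat{E_{i,x}}=\bigoplus_{\varphi\in\Phi}\bigl(L_\varphi(i\cdot I_\varphi)\otimes R_\varphi\bigr)$. Since $\sO_X\to\sO_{\hat X_x}$ is faithfully flat and both the filtration $F_\bullet$ and the formation of the graded pieces commute with this base change (by Lemma~\ref{lem:basic} applied to the $E_i$ and their tensor products with $\Omega^p_X(\log D)$), it suffices to prove the two assertions after completing at every closed point $x\in D$: first, that the induced differential on $\gr^F_q$ is $\sO_{\hat X_x}$-linear, and second, that the complex $\gr^F_q\DR_{\log D}\widehat{V^0\sE}(\Delta)$ is acyclic. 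The $\sO_X$-linearity away from $D$ is immediate since there $V^0\sE(\Delta)=\sE$ and the filtration stabilizes, so only the behaviour along $D$ matters.

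For $\sO_X$-linearity of the graded differential: the differential of $\DR_{\log D}V^0\sE(\Delta)$ in degree $p$ sends $\Omega^p_X(\log D)\otimes E_{q+p}(\Delta)$ into $\Omega^{p+1}_X(\log D)\otimes E_{q+p+1}(\Delta)$, and the Leibniz rule shows that the part involving $d$ on the form component lands in $\Omega^{p+1}_X(\log D)\otimes E_{q+p}(\Delta)$, i.e.\ in $F_{q-1}$ in the appropriate shifted sense; hence on $\gr^F_q$ only the $\sO_X$-linear ``connection part'' $\nabla_\xi$ survives, and $\nabla_\xi$ maps $E_{q+p}$ to $E_{q+p+1}$ by the defining property 2) of the tower, so it descends to an $\sO_X$-linear map $E_{q+p}/E_{q+p-1}\to E_{q+p+1}/E_{q+p}$. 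This gives a well-defined $\sO_X$-linear complex
\[
\gr^F_q\DR_{\log D}V^0\sE(\Delta):\quad 0\to \gr_{q} E_\bullet(\Delta)\otimes\Omega^0 \to \gr_{q+1}E_\bullet(\Delta)\otimes\Omega^1(\log D)\to\cdots,
\]
where $\gr_m E_\bullet(\Delta)=E_m(\Delta)/E_{m-1}(\Delta)$, with differential induced by the symbol of $\nabla$ along the logarithmic directions.

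For acyclicity, I would identify this graded complex, in the unramified local model, with a Koszul-type complex built from the log-symbol map and exhibit a contracting homotopy. Using the decomposition over $\Phi$, each summand contributes a complex governed by $d\varphi$: the graded of $L_\varphi(i\cdot I_\varphi)\otimes R_\varphi$ at step $q$ is, up to $\sO_X$-linear twist, the cokernel of ``wedging with $d\varphi$'' in one degree lower, and the induced differential is precisely $\wedge\,d\varphi$ (the regular part $R_\varphi$ contributes no jumps, so it just tensors along for the ride, and $\varphi=0$ summands are killed entirely since then $E_i$ does not grow, making $\gr_q=0$ for $q\ge1$). Thus the graded complex is, summand by summand, the (shifted) Koszul complex of the section $d\varphi\in\Omega^1_X(\log D)\otimes\sO_X(I_\varphi)$, localized so that $d\varphi$ becomes a regular, in fact invertible-after-the-twist, element; contraction with any local vector field pairing nontrivially with $d\varphi$ gives the homotopy. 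The main obstacle I anticipate is precisely making this last identification clean: one must check that $E_{q+p+1}/E_{q+p}$ is exactly ``$\Omega^1(\log D)(I_\varphi)\otimes E_{q+p}/E_{q+p-1}$ modulo the image of smaller jumps'' — i.e.\ that the jumps in the tower are governed purely by $d\varphi$ and do not accidentally stabilize early or grow by more than expected — and to handle uniformly the interplay of the several $\varphi$'s and the effective divisor $\Delta$ (which only shifts everything by a fixed line bundle and hence does not affect exactness). Once the local Koszul picture is in place, acyclicity for $q\ge1$ follows because $d\varphi$ has a genuine pole (is nonzero in the relevant graded piece), so the Koszul differential is injective in the lowest degree and the homotopy kills the rest.
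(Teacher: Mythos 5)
Your proposal follows essentially the same route as the paper: reduce by faithful flatness and Galois descent to the formal unramified model, note that the $\varphi=0$ summands have constant filtration, and identify each $\varphi\neq0$ summand of the graded complex with a $\wedge\,d\varphi$ (Koszul-type) complex on $\sO_{I_\varphi}(\hat{\Delta}+(q+\bullet)I_\varphi)\otimes R_\varphi$, which is acyclic because some $x_i\partial_{x_i}$ acts as multiplication by $-m_i$, i.e.\ invertibly. The ``main obstacle'' you flag --- that the jumps of the tower are governed exactly by $d\varphi$ --- is precisely what the explicit formula $\widehat{E_{i,x}}=\bigoplus_\varphi\bigl(L_\varphi(i\cdot I_\varphi)\otimes R_\varphi\bigr)$ from the proof of Lemma~\ref{lem:Ei} supplies, so your argument is complete once you invoke it.
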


\begin{proof}
The $\sO_X$-linearity is trivial. Faithful flatness of $\sO_X\to \sO_{\hat X_x}$ again implies that the proposition is true if and only if the analogous proposition on $\hat X_x$ is true for all closed points $x\in D$.
Again by Definition~\ref{defn:goodlattice} one reduces the problem to the case of the existence of an unramified good Deligne-Malgrange lattice: setting $\hat{\Delta'_x}=\hat{h_x}^*\hat\Delta_x$ and defining $\hat{E'_{i,x'}}$ from $\hat{E'_{0,x'}}$ as in 2) before Lemma \ref{lem:Ei}, we have $\hat{E_i(\Delta)}_x=(\hat{E'_{i,x'}}(\hat{\Delta'_x}))^G$ (same argument as in the proof of Proposition \ref{prop:beta} below). Let us consider the decomposition of Definition~\ref{defn:goodlattice}~1). For the component of $\hat E_x$ corresponding to $\varphi=0$, the filtration $F_q$ is constant and the assertion is obvious. For a component $L_\varphi\otimes R_{\varphi}$ with $\varphi\neq0$, the statement is equivalent to the $\sO_X$-linear complex $$(\Omega^\bullet_{\hat X_x}(\log D)\otimes_{\sO_{\hat X_x}} \sO_{I_\varphi} (\hat{\Delta'_x}+(q+\bullet)I_\varphi) \otimes_{\sO_{\hat X_x}} R_{\varphi}, \wedge d\varphi)$$ being quasi-isomorphic to zero. Moreover, it is enough to show this assertion after a finite extension of the ground field, so we can assume that Assumption \ref{assumption} holds. We set $\varphi=u(x)x^{-m}$ with $u\in\sO_{\hat x}^\times$ (i.e., $u(0)\neq0$) and $m\in\N^\ell\setminus\{0\}$. If $m_i\neq0$, then for any $k\geq0$,
\[
x_i\partial_{x_i}:\sO_{I_\varphi} (\hat{\Delta'_x}+(q+k)I_\varphi) \otimes_{\sO_{\hat X_x}} R_{\varphi}\to\sO_{I_\varphi} (\hat{\Delta'_x}+(q+k+1)I_\varphi) \otimes_{\sO_{\hat X_x}} R_{\varphi}
\]
is an isomorphism, being nothing but the multiplication by $-m_i$. The assertion follows.
\end{proof}

We now treat $\beta$. The assertion is local so we fix a closed point $x\in D$. Theorem~\ref{thm:mochizuki} yields for a closed point $x\in D$
natural numbers $m$ depending on $i, x$ and the choice of
$\widehat{ h_x}$ such that $\widehat{ h_x}$ is a Kummer cover along $\hat D_{i,x}$ which ramifies with ramification indices $m$.

\begin{definition} \label{defn:mix}
Fixing $(i,x)$, the minimal $m$ is called the ramification index of $(\sE,\nabla)$ at~$x$ along $D_i$
and is denoted by $m_{i,x}$.
\end{definition}

\begin{lemma} \label{lem:alg}
Fixing a closed point $x\in D$, the Galois cover $\widehat{h_x}$ from Definition~\ref{defn:goodlattice} \textup{2)} is algebraizable in the following sense. There exists a smooth projective variety $Z$ defined over~$k$, a finite union $z\in Z$ of closed points together with a flat finite morphism
$g: Z\to X$, finite étale Galois of group $H$ outside of a strict normal crossings divisor $\sD$ containing $D$, such that $Z\setminus g^{-1}(\sD)\hto Z$ is a good compactification, and there is a factorization
$$ \hat g_x: \hat Z_z \To{\lambda_x} \hat{X'_{x'} }\To{\widehat{h_x}} \hat X_x,$$
where $g_x$ is étale on $X_x \setminus D_x$. In particular, $G$ is a quotient of $H$.
\end{lemma}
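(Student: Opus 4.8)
The plan is to recognize $\widehat{h_x}$ as the completion, at a point above $x$, of a global finite cover of $X$ obtained by the Bloch--Gieseker covering construction (as packaged in Kawamata's covering lemma). The elementary input is the one recorded just before Definition~\ref{defn:mix}: $\widehat{h_x}$ is a Kummer cover, so after a harmless finite extension of $k$ it is the standard tame cover of $\hat X_x$ ramified with index $m_{i,x}$ along each branch $\hat D_{i,x}$ of $\hat D_x$. By Abhyankar's lemma (characteristic $0$), any finite cover of $\hat X_x$ whose ramification index along $\hat D_{i,x}$ is a multiple of $m_{i,x}$ and which realizes the same residue extension factors through $\widehat{h_x}$. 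Thus the only real content is to globalize such a Kummer cover while keeping the total ramification divisor strict normal crossings and disjoint from $x$ away from $D$.

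First I would replace $k$ by a finite extension so that the geometric irreducible components $D_1,\dots,D_r$ of $D$ are defined over $k$ and $k$ contains the roots of unity of the orders occurring below; by the Remark after Definition~\ref{defn:goodlattice} this is harmless. Fix integers $n_i$ with $m_{i,x}\mid n_i$. I would then choose a very ample $A$ on $X$ so positive that each $\sO_X(-D_i)\otimes A^{\otimes n_i}$ is very ample, and, using Bertini, pick general members $H_i$ of these linear systems which avoid $x$ and for which $D+\sum_j H_j$ is strict normal crossings; set $\sD:=(D+\sum_i H_i)_{\mathrm{red}}\supset D$. Since $\sO_X(D_i+H_i)\cong A^{\otimes n_i}$, a section cutting out $D_i+H_i$ defines a degree-$n_i$ cyclic cover of $X$ branched exactly along $D_i+H_i$; let $Z$ be a geometrically irreducible component of the normalization of the fibre product over $X$ of these $r$ cyclic covers, and $g:Z\to X$ the induced finite morphism. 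Because the branch divisors $D_i+H_i$ cross normally, Abhyankar's lemma gives that $Z$ is smooth and $g^{-1}(\sD)$ is strict normal crossings, so $Z\setminus g^{-1}(\sD)\hto Z$ is a good compactification; $g$ is Galois with (abelian) group $H\subseteq\prod_i\Z/n_i\Z$, étale over $X\setminus\sD$, and flat by miracle flatness, being a finite morphism between smooth varieties of the same dimension.

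Finally I would take $z:=g^{-1}(x)_{\mathrm{red}}$. As the $H_i$ avoid $x$, the divisors $\sD$ and $D$ agree near $x$, so $g_x$ is étale over $X_x\setminus D_x$ and $\hat g_x:\hat Z_z\to\hat X_x$ is a Kummer cover ramified with index $n_i$ along $\hat D_{i,x}$ (together with the residue extension coming from $k$). Since $m_{i,x}\mid n_i$, the first paragraph produces the required factorization $\hat g_x=\widehat{h_x}\circ\lambda_x$, with $\lambda_x$ again a Kummer cover. Moreover $\hat g_x$ is Galois of group $H$---it is the completion of the quotient map $Z\to Z/H=X$ along the fibre over $x$---and $\widehat{h_x}$ is an intermediate Galois subcover, hence corresponds to a (normal, as $H$ is abelian) subgroup $N\trianglelefteq H$ with $G=\mathrm{Gal}(\hat{X'_{x'}}/\hat X_x)=H/N$; thus $G$ is a quotient of $H$.

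The step I expect to be the real work is not a computation but the content hidden in the two invocations of Abhyankar's lemma: that the normalized iterated cyclic cover $Z$ is genuinely smooth with $g^{-1}(\sD)$ strict normal crossings, and the accompanying bookkeeping---ensuring that $\hat g_x$ is an honest Kummer cover so that the factorization is through the \emph{given} $\hat{X'_{x'}}$ of Definition~\ref{defn:goodlattice} rather than merely through \emph{some} cover with the same ramification data, and that the residue-field extensions line up so that $G$ really comes out as a literal quotient of $H$.
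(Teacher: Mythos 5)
Your proposal is correct and follows essentially the same route as the paper: the paper simply cites Kawamata's covering lemma [Kaw81, Th.\,17] (together with [Mat02, Lem.\,5.2.4] for the Galois property) and observes that the auxiliary branch divisor, being generic in a very ample linear system, can be chosen to avoid $x$, whereas you unpack that same Bloch--Gieseker/Kawamata construction explicitly. The bookkeeping you flag at the end (Abhyankar's lemma forcing the factorization through the \emph{given} Kummer cover $\hat{X'_{x'}}$, and the residue-field extensions matching up) is precisely the content the paper leaves implicit in asserting the factorization of $\hat g_x$.
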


\begin{proof}
We apply \cite[Th.\,17]{Kaw81} (together with \cite[Lem.\,5.2.4]{Mat02} for the Galois property) to construct $g$. We just have to check the last property. To say that $g_x$ is étale on $X_x \setminus D_x$ is to say that $x$ does not lie on the closure of $\sD \setminus D$. By the proof of \cite[Th.\,17]{Kaw81}, this divisor is generic in a very ample linear system, thus in particular can be chosen to avoid any $0$-dimensional subscheme. This finishes the proof.
\end{proof}

We set $j': Z\setminus g^{-1}(\sD)\to Z$ for the closed embedding and $\Delta''=g^*(\Delta)$.
We denote by $\beta'$ the corresponding embedding of de Rham complexes
$$ \DR_{\log g^{-1}(\sD)} [(V^0g^*\sE) (\Delta'')] \To{\beta'} \DR( j'_*j^{\prime*}g^*\sE).$$
We use the notations
$$\beta_x=\beta\otimes_{\sO_X} \sO_{X_x}, \ \ \beta'_z=\beta'\otimes_{\sO_Z} \sO_{Z_z},$$
and similarly $(\DR_{\log D} (V^0\sE) (\Delta))_x$ etc.

\begin{proposition}\label{prop:beta}
If $\beta'_z$ is a quasi-isomorphism, then so is $\beta_x$.
\end{proposition}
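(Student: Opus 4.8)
The plan is to descend the quasi-isomorphism $\beta'_z$ down the Galois cover $g\colon Z\to X$ via a $G$-invariants (equivalently, transfer/trace) argument, exploiting that we are in characteristic $0$ so that averaging over the group is available. First I would pass to the formal completions: since $\sO_{X,x}\to\sO_{\hat X_x}$ is faithfully flat, $\beta_x$ is a quasi-isomorphism if and only if $\hat\beta_x$ is, and likewise $\beta'_z$ is a quasi-isomorphism iff $\hat\beta'_z$ is. By Lemma~\ref{lem:alg} the formal cover $\hat g_x\colon\hat Z_z\to\hat X_x$ factors through $\widehat{h_x}$, and $\hat g_x$ is finite, generically étale, with $\hat Z_z$ formally smooth and $g^{-1}(\sD)$ a strict normal crossings divisor containing the pullback of $D$. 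The key structural input is that for each $q$ (or for $V^0\sE$ itself) one has, on completions, an identification
\[
\big(\DR_{\log D}(V^0\sE)(\Delta)\big){}_x \;=\;\Big(\hat g_{x*}\,\DR_{\log g^{-1}(\sD)}\big((V^0g^*\sE)(\Delta'')\big){}_z\Big)^{H},
\]
and similarly $\DR(j_*\sE)_x=(\hat g_{x*}\DR(j'_*j^{\prime*}g^*\sE)_z)^{H}$, compatibly with $\beta$ and $\beta'$. The first identity rests on the fact, used already in Lemma~\ref{lem:Ei} and Proposition~\ref{prop:DRlogEV0}, that the good Deligne–Malgrange lattice satisfies $\hat E_x=(\hat{E'_{x'}})^{G}$ and that the log-derived lattices $E_i$ and the log differential forms are all constructed by operations (generation under $\Theta(\log)$, tensoring with $\Omega^\bullet(\log)$, twisting by divisors supported on the boundary) that commute with taking $g$-pushforward and $H$-invariants — this is where one uses $g^*\Theta_X(\log D)\hookrightarrow\Theta_Z(\log g^{-1}(\sD))$ and the projection formula for the finite flat morphism $g$.

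Granting this, the proof is then purely formal: $\hat g_{x*}$ is exact (finite morphism), and taking $H$-invariants is exact on $\C$-vector spaces (or $k$-vector spaces) since $|H|$ is invertible — more precisely $H$-invariants is a direct summand of the identity functor via the averaging idempotent $e=\frac1{|H|}\sum_{h\in H}h$. Therefore applying $(\hat g_{x*}(-))^{H}$ to the quasi-isomorphism $\hat\beta'_z$ yields a quasi-isomorphism, and by the two displayed identifications this is exactly $\hat\beta_x$; hence $\beta_x$ is a quasi-isomorphism. One must be a little careful that $H$ acts on $\hat Z_z$ possibly permuting the points lying over $x$, so "$H$-invariants of the pushforward" is the right formulation rather than a naive stalkwise invariants; equivalently one first restricts to $\hat Z_z$ with its full $H$-action and $\hat g_x$ is the quotient map, and the factorization through $G$ (the decomposition group) in Lemma~\ref{lem:alg} lets one match this with the $G$-invariants description coming from Definition~\ref{defn:goodlattice}~2).

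The main obstacle I expect is establishing the two compatibility isomorphisms, i.e. that forming $V^0\sE(\Delta)$ (the $\sD_X(\log D)$-module generated by the Deligne–Malgrange lattice), then the associated log-de Rham complex, commutes with $g_*$ and with $H$-invariants. The lattice-level statement $\hat{E_i(\Delta)}_x=(\hat{E'_{i,x'}}(\hat{\Delta'_x}))^{G}$ is asserted in the text ("same argument as in the proof of Proposition~\ref{prop:beta}"), so what genuinely needs proof here is precisely this: by induction on $i$, using that $G$ acts $\sO$-linearly and compatibly with $\nabla$, that $(\,\cdot\,)^{G}$ is exact, and that $G$-invariant generation under the log-vector fields of $\hat Z_z$ (which lie over those of $\hat X_x$) recovers the log-vector fields of $\hat X_x$ after invariants — i.e. $(\Theta_{\hat Z_z}(\log)^{G})$ maps onto $\Theta_{\hat X_x}(\log)$ because $g$ is tamely ramified along the boundary (a Kummer/Kawamata cover), so $g^*\Omega^1(\log)\xrightarrow{\sim}\Omega^1(\log)$ up to the twist already encoded in passing to $E_i$. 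Once that lattice-level and forms-level bookkeeping is in place, the cohomological descent is immediate, so I would organize the write-up as: (1) reduce to completions; (2) prove the $g$-pushforward / $H$-invariants compatibility for $\DR_{\log}(V^0\sE)(\Delta)$ and for $\DR(j_*\sE)$; (3) conclude by exactness of $\hat g_{x*}$ and of $(-)^{H}$ in characteristic $0$.
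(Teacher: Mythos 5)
Your proposal is correct and follows essentially the same route as the paper's proof: descend along the Kawamata cover $g$ of Lemma~\ref{lem:alg}, identify the log de Rham complex of $(V^0\sE)(\Delta)$ at $x$ with the $H$-invariants of the corresponding complex upstairs (the key step being the inductive identification $F_i^H=E_{i,x}$, proved exactly via the averaging idempotent $\frac1{|H|}\sum_h h$ and the equality $g_x^*\Theta_{X_x}(\log D_x)=\Theta_{Z_z}(\log g_x^{-1}D_x)$ off the turning locus), and conclude by exactness of $(-)^H$ in characteristic~$0$. The only cosmetic difference is that the paper works with the semi-local rings $\sO_{X,x}$, $\sO_{Z,z}$ and invokes the formal completion only to pin down $F_0^H=E_{0,x}$ via uniqueness of the Deligne--Malgrange lattice, rather than completing throughout.
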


\begin{proof}
The morphism $ \beta_x:(\DR_{\log D}V^0\sE)_x\to (\DR j_*\sE)_x$ factors as
\[
(\DR_{\log D}V^0\sE)_x\to (\DR_{\log D}\sE)_x\to (\DR j_*\sE)_x,
\]
and, since the second morphism is clearly an isomorphism, we are reduced to showing that $(\DR_{\log D}(V^0\sE) (\Delta))_x\to(\DR_{\log D}j_*\sE)_x$ is a quasi-isomorphism.
By Lemma~\ref{lem:alg}, and uniqueness in Theorem~\ref{thm:mochizuki},
the sheaf $F_0^K$ of $K=\ker(H\to G)$-invariants of the Deligne-Malgrange lattice $F_0$ of $(j'_*j^{\prime*}g^*\sE)_z$
has formal germ $\wh{(F_0)}_z{}^K=\wh{(F_0^K)}_{x'}$ equal to the Deligne-Malgrange lattice of $\widehat{h_x} ^*\hat\sE_x$ on $\hat{X'_{x'}}$. This implies that $\wh{(F_0^H)}_x=\wh{(F_0)}_z{}^H=(\wh{(F_0)}_z{}^K)^G$ is equal to $\wh{(E_0)}_x$. We deduce from this that the two lattices~$F_0^H$ and $E_{0,x}$ of $(j_*\sE)_x$ coincide.

We now prove inductively on $i$ that $F_i^H=E_{i,x}$. We assume it is true for some $i\ge 0$. Recall that $g_x$ is étale away from $D_x$. As
$g_x^*\Theta_{X_x}(\log D_x)= \Theta_{Z_z}(\log g_x^{-1}D_x)$, one
has $$(\Theta_{Z_z}(\log g_x^{-1}D_x) F_i)^H= \Theta_{X_x}(\log D_x) F_i^H= \Theta_{X_x}(\log D_x)E_{i,x}. $$
On the other hand,
$$E_{i+1,x}= F_i^H+ (\Theta_{Z_z}(\log g_x^{-1}D_x) F_i)^H= \bigl(F_i+ (\Theta_{Z_z}(\log g_x^{-1}D_x) F_i)\bigr)^H =F_{i+1}^H$$
as
$E_{i,x}$ and $\Theta_{X_x}(\log D_x)E_{i,x}$ are sheaves of $\Q$-vector spaces, thus any local section $${(v+w)}\in \bigl(F_i+ (\Theta_{Z_z}(\log g_x^{-1}D_x) F_i)\bigr)^H$$ with $v\in F_i$, $w\in
(\Theta_{Z_z}(\log g_x^{-1}D_x) F_i)$ can be written as
$$ \frac{1}{|H|} \sum_{h\in H} h\cdot v + \frac{1}{|H|} \sum_{h\in H} h\cdot w \in F_i^H+ (\Theta_{Z_z}(\log g_x^{-1}D_x) F_i)^H.$$
We conclude that
$$ ((V^0 g^*\sE)_z)^H= (V^0\sE)_x,$$
and similarly $((V^0 g^*\sE)(\Delta'')_z)^H= V^0\sE(\Delta)_x$,
from which again by the compatibility of the differential forms we conclude
$$( \DR_{\log \sD} (V^0g^*\sE)(\Delta''))_z{}^H = (\DR_{\log \sD} (V^0\sE) (\Delta))_x {}=\DR_{\log D_x} (V^0\sE)(\Delta)_x .$$
On the other hand, one trivially has
$$ \DR(g^*j_*\sE)_z{}^H=\DR(j_*\sE)_x.$$
This finishes the proof.
\end{proof}

We may now assume that $\sE_x$ has an unramified good Deligne-Malgrange lattice.
The sheaf $\sD_X$ of differential operators on $X$ is canonically endowed with the filtration $F_\bullet\sD_X$ by the order of differential operators. In particular, $F_p\sD_X=0$ for $p\leq-1$, $F_0\sD_X=\sO_X$ and $F_1\sD_X$ is generated by $\sO_X$ and $\Theta_X$.
We first mimic the definition of the \emph{filtration by the order of the poles} (shifted by~$\Delta$) introduced by Deligne \cite[Chap.\,6]{Del70}, and its relation with the stupid filtration of the logarithmic de Rham complex. We set, for $p\in\Z$,

\[
P^p j_*\sE=(F_{-p}\sD_X)\cdot ( V^0\sE)(D+\Delta)\subset j_*\sE,
\]
so that, in particular,
\[
P^pj_*\sE=0\;(p\geq1),\quad P^0j_*\sE=(V^0\sE)(D+\Delta),\quad P^{-1}j_*\sE=P^0 j_*\sE+\Theta_X\cdot P^0j_*\sE.
\]
The de Rham complex is then filtered as usual, for $p\in\Z$,
\[
P^p\DR( j_*\sE)=\{0\ra P^pj_*\sE\ra\Omega^1_X\otimes P^{p-1} j_* \sE\ra\cdots\ra\Omega_X^{\dim X}\otimes P^{p-\dim X} j_*\sE\ra0\}.
\]
By definition, the differentials on the graded complex $P^p \DR j_* \sE /P^{p+1} \DR j_*\sE$ are $\sO_X$\nobreakdash-lin\-ear.
On the other hand, one has the stupid filtration $\sigma^{\geq p}$ on the
logarithmic de Rham complex of $V^0:=(V^0\sE)(\Delta)$:
\[
\sigma^{\geq p}\DR_{\log D}V^0=\{0\ra\cdots\ra0\ra\Omega_X^p(\log D)\otimes V^0\ra\cdots\ra\Omega_X^{\dim X}(\log D)\otimes V^0\ra0\},
\]
for which the graded complexes are just sheaves in various degrees. We use the same notations for the localization at a closed point $_x$.
Theorem~\ref{thm:qis} for $\beta$ is then a consequence of the following more precise theorem.

\begin{thm} \label{thm:P}
If $\sE_x$ has an unramified good Deligne-Malgrange lattice, the inclusion of filtered complexes
\[
(\DR_{\log D}V^0_x,\sigma^{\geq\bullet}) \hto(\DR\sE_x,P^\bullet)
\]
is a filtered quasi-isomorphism.
\end{thm}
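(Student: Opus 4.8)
The plan is to reduce everything to the explicit model of an unramified good Deligne-Malgrange lattice, namely $\hat\sE_x=\bigoplus_{\varphi\in\Phi}(L_\varphi\otimes R_\varphi)$, and to verify the filtered quasi-isomorphism summand by summand after passing to the formal completion $\sO_{\hat X_x}$, which is legitimate by faithful flatness. Since both filtrations are exhaustive and bounded (the stupid filtration $\sigma^{\geq\bullet}$ is visibly so, and $P^\bullet$ is bounded because $F_{-p}\sD_X=0$ for $p\geq1$ while $P^p\to 0$ as $p\to-\infty$ follows from coherence of $V^0\sE(\Delta)$ over $\sD_X(\log D)$ together with the finiteness of the irregularity), it suffices to check that the inclusion induces an isomorphism on the associated graded complexes. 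First I would describe the graded pieces: $\gr_\sigma^p\DR_{\log D}V^0_x$ is simply the sheaf $\Omega^p_X(\log D)\otimes V^0_x$ sitting in degree $p$ with zero differential, whereas $\gr_P^p\DR\sE_x$ is the $\sO_X$-linear complex $\{\Omega^\bullet_X\otimes\gr_P^{p-\bullet}j_*\sE_x\}$.

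The heart of the argument is then the identification, for each $\varphi$, of the order-of-poles filtration $P^\bullet$ on the $\varphi$-summand with (a shift of) the filtration by $E_\bullet(D+\Delta)$, paralleling Deligne's computation in the regular-singular case. For $\varphi=0$ the summand is $R_0(D+\Delta)$ twisted by Deligne's regular extension, $V^0$ is $E_0(\Delta)$ locally, and the statement is exactly Deligne's classical comparison \cite[Chap.\,6]{Del70} between the pole filtration and the stupid filtration of the logarithmic de Rham complex; I would cite that and move on. For $\varphi\neq0$ with pole divisor $I_\varphi$, using $\widehat{E_{i,x}}=\bigoplus_\varphi L_\varphi(i\cdot I_\varphi)\otimes R_\varphi$ from the proof of Lemma~\ref{lem:Ei}, the key computation is that applying a vector field $\xi\in\Theta_X$ (not just $\Theta_X(\log D)$!) to $L_\varphi$ produces a pole of order bounded by $I_\varphi$ plus the pole gained from $\xi$ acting non-logarithmically; one then checks that the graded of $P^\bullet$ on this summand is governed, as in Proposition~\ref{prop:DRlogEV0}, by the Koszul-type complex $(\Omega^\bullet_{\hat X_x}\otimes\sO_{I_\varphi}(\cdots)\otimes R_\varphi,\wedge d\varphi+\cdots)$. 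Writing $\varphi=u\cdot x^{-m}$ as in the earlier proof and using that $x_i\partial_{x_i}$ acts by multiplication by $-m_i$ (a unit) on the appropriate graded piece whenever $m_i\neq0$, one gets contractibility of the relevant graded pieces, which forces the comparison map on associated graded to be an isomorphism in the $\varphi\neq 0$ part — more precisely, it identifies the nonzero graded pieces on the two sides in matching degrees.

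Concretely, the steps in order would be: (1) reduce to $\hat X_x$ by faithful flatness and to the unramified case by Definition~\ref{defn:goodlattice}, exactly as in Proposition~\ref{prop:beta}; (2) reduce to a single summand $L_\varphi\otimes R_\varphi$; (3) for $\varphi=0$, invoke Deligne; (4) for $\varphi\neq0$, compute $\gr_P^p$ explicitly using the formula for $\widehat{E_{i,x}}$ and the action of $\Theta_X$ on $L_\varphi$, reducing to the contractible Koszul complex on $\sO_{I_\varphi}$ twisted by $R_\varphi$ with differential built from $d\varphi$; (5) conclude that the inclusion is a quasi-isomorphism on each $\gr^p$, hence a filtered quasi-isomorphism. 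The main obstacle is step (4): one must pin down precisely how the non-logarithmic part of a vector field interacts with both the irregular factor $L_\varphi$ and the pole order along $D$, i.e.\ show that $P^p$ restricted to the $\varphi$-summand is exactly $\sO_{\hat X_x}$-spanned by $(F_{-p}\sD_X)\cdot(L_\varphi(I_\varphi+\Delta)\otimes R_\varphi)$ and that the induced differential on the graded is the expected contraction with $d\varphi$ (up to lower-order terms that do not affect exactness). Handling the mixed terms where $\xi$ is partly logarithmic and partly transverse, and checking that the filtration indices bookkeep correctly so that $\gr_\sigma^p$ on the left maps isomorphically onto the surviving cohomology of $\gr_P^p$ on the right, is the delicate point; once the model is written out, though, it reduces to the same elementary fact ($x_i\partial_{x_i}=-m_i$ on $\gr$) already used in Proposition~\ref{prop:DRlogEV0}.
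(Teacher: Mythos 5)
Your steps (1)--(3) match the paper's proof: reduction to the formal, unramified case via $\sO_X$-linearity of the graded differentials and Definition~\ref{defn:goodlattice}, then the regular summand handled by Deligne (the paper cites \cite[Prop.\,II.3.13]{Del70}, using admissibility of $\tau$ for the residue condition). The gap is in step (4), which you yourself flag as the ``main obstacle'': the proposed reduction of $\gr_P^\bullet\DR\sE_x$ to the Koszul-type complex with differential $\wedge d\varphi$ on $\sO_{I_\varphi}$ is the mechanism of Proposition~\ref{prop:DRlogEV0}, i.e.\ of the $\alpha$-comparison, and it does not transplant to the $\beta$-comparison. The reason is structural: on the $\varphi$-summand one has $V^0=\bigcup_i L_\varphi(i\cdot I_\varphi)\otimes R_\varphi=L_\varphi(*I_\varphi)\otimes R_\varphi$, so $P^0=(V^0)(D+\Delta)$ is already localized along $|I_\varphi|$, and applying a transverse vector field $\partial_{x_i}$ with $x_i$ in the support of $I_\varphi$ does not increase the filtration at all --- the pole it creates (both from differentiating $x_i^{-N}$ and from $\partial_{x_i}\varphi$) is absorbed by the localization. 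Hence the growth of $P^\bullet$ takes place only along the components of $D$ \emph{not} in $|I_\varphi|$, where the connection is regular, and the graded differential there is governed by residues, not by $d\varphi$. Your plan of ``contractibility via $x_i\partial_{x_i}=-m_i$'' addresses the wrong directions.

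The paper resolves exactly this point by a case distinction you do not make. If $|I_\varphi|=D$, then $V^0=\sE$ and $P^p\sE=\sE$ for all $p\leq0$, so both filtrations are essentially trivial and the claim reduces to $\Omega^p_X(\log D)\otimes\sE=\Omega^p_X\otimes\sE$. If $|I_\varphi|\subsetneq D$, one reduces to rank one and writes the connection as an external product of a factor whose pole divisor is the full boundary (handled by the previous trivial case) and a regular factor (handled by Deligne), combining the two as in \cite[p.\,81]{Del70}. Without this dichotomy and the external-product (Künneth) step, your step (4) as written does not identify $\gr_P^p\DR\sE_x$ correctly, so the argument is incomplete at its central point; the rest of the outline is sound.
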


\begin{proof}
The differentials of the graded complexes $\gr^p_\sigma\DR_{\log D}V^0_x$ and $\gr^p_P\DR\sE_x$ ($p\in\Z$) being $\sO_X$-linear, again by Definition~\ref{defn:goodlattice} we are reduced to the formal case.
In the regular case, the statement follows from the formal version of \hbox{\cite[Prop.\,II.3.13]{Del70}} as $\tau$ is admissible thus the condition on the residues is fulfilled.
In the case where $\varphi\neq0$, if $|I_\varphi|=D,$ we have $V^0=\sE$ and $P^p\sE=\sE$ for $p\leq0$. The assertion amounts to proving that the natural morphism
\[
(\DR_{\log D}\sE,\sigma^{\geq\bullet})\hto(\DR\sE,\sigma^{\geq\bullet})
\]
is a filtered quasi-isomorphism, which is obvious since $\Omega_X^p(\log D)\otimes\sE=\Omega_X^p\otimes\sE$ for every $p\geq0$. On the other hand, if $|I_\varphi| \subsetneq D$, $\sE$ is a successive extension of rank-one connections and it is enough to prove the assertion for such connections. Each such connection can be written as an external product of two terms, one term satisfying the assumption above, the other one being regular. For such a rank-one term, the assertion follows by using the regular case and the case $D_\varphi=D$, both proved above, by arguing as in \cite[p.\,81]{Del70}.
\end{proof}

\section{Remarks } \label{sec:rmks}

\subsection{Dimension one and Deligne's theorem}

If $n=1$, in which case $X$ is necessarily the normal compactification of $U$, this concept has been developed by Deligne \cite[\S6]{Del70}. He shows over $k=\C$ the existence of pairs $E_0\subset E_1$ with 1), 2), 3). He proves that although those pairs are not unique, ${\rm dim}_{\C} (E_1/E_0)_x$ is independent of the choice for all closed points $x$ on $D$ and defines the irregularity divisor $\sum_{x\in D} {\rm dim}_{\C} (E_1/E_0)_x \cdot x$ of $(\sE,\nabla)$ \cite[Lem.\,6.21]{Del70}. It has been then deemed `pairs of good lattices' in \cite[\S3]{BE04} in order to define local Fourier transforms of connections, then in \cite[\S3.1]{BBDE04} to compute the periods of the local Fourier transforms. While Deligne in {\it loc.\ cit.}\ constructed the lattices using the existence of a cyclic vector, the construction in \cite{BE04}, \cite{BBDE04} {\it loc.\ cit.}\ and \cite[\S5]{BBE02} uses the Turrittin-Levelt decomposition.

\subsection{Non-negative shifts}
The proof of Theorem~\ref{thm:main} yields that for any natural number~$a$, the
embeddings of complexes
$$\Omega^\bullet_{X}\otimes_{\sO_X} E_{\bullet} \hto \Omega^\bullet_{X}\otimes_{\sO_X} E_{a+\bullet} \hto j_*(\Omega^\bullet_U\otimes_{\sO_U}\sE)$$
are quasi-isomorphisms.

\subsection{Boundedness}
We remark the following.
\begin{claim}
The ramification indices $m_{i,x}$ of Definition~\ref{defn:mix} are bounded. Equivalently there is a natural number $M$ such that $m_{i,x}$ divides $M$ for all $i,x$.
\end{claim}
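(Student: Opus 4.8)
The plan is to argue that the ramification indices are controlled by a single birational model. By Theorem~\ref{thm:mochizuki}, there is one good compactification $j\colon U\hto X$ on which a good Deligne-Malgrange lattice $E$ exists; fix it. For a closed point $x\in D$, Definition~\ref{defn:goodlattice}~2) provides a finite Galois cover $\widehat{h_x}\colon\hat X'_{x'}\to\hat X_x$, étale over $\hat U_x$, of group $G=G_x$, and Theorem~\ref{thm:mochizuki} (together with the discussion following it, via Kummer covers) shows this cover can be taken to be Kummer along each component $\hat D_{i,x}$ with ramification index $m_{i,x}$. The first step is to observe that, by Lemma~\ref{lem:alg}, the formal cover $\widehat{h_x}$ is a formal factor of a \emph{global} finite cover $g\colon Z\to X$, finite étale Galois over the complement of a strict normal crossings divisor $\sD\supset D$, whose ramification indices along the components of $\sD$ are fixed integers $e_1,\dots,e_r$. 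Since $\widehat{h_x}$ is dominated by $\hat g_x$, each $m_{i,x}$ divides the ramification index of $g$ along the component of $\sD$ through $x$ specializing to $D_i$, hence divides $\mathrm{lcm}(e_1,\dots,e_r)$.

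The subtlety is that Lemma~\ref{lem:alg} is stated \emph{for a fixed $x$}, and a priori the cover $g$ and the divisor $\sD$ depend on $x$. So the second step is to upgrade Lemma~\ref{lem:alg} to a statement with finitely many covers, or to extract boundedness directly. The cleanest route is: for each $x\in D$ choose $g_x\colon Z_x\to X$ as in Lemma~\ref{lem:alg}, with ramification indices dividing some $M_x\in\N$. The points $x\in D$ for which $\widehat{h_x}$ is \emph{nontrivial}, i.e.\ for which $(\sE,\nabla)$ is ramified at $x$, form a closed subset $T\subsetneq D$ of positive codimension in $D$ (at an unramified point only a field extension is needed, so $m_{i,x}=1$ there). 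By generic constructibility of the good formal structure — the ramification data $m_{i,x}$ are constant along strata of a stratification of $D$ — the set of values $\{m_{i,x}\}$ is finite: on each stratum $S$ the generic ramification index $m_{i,S}$ computes $m_{i,x}$ for every $x\in S$, because the formal decomposition of $(\sE,\nabla)$ along $D_i$ extends over $S$ after the corresponding Kummer base change. Since $D$ has finitely many strata and on each only finitely many $m_{i,S}$ occur ($i$ running over components of $D$ through that stratum), $\{m_{i,x}\mid i,x\}$ is finite, and $M:=\mathrm{lcm}$ of these values works.

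In more detail, the key input is the existence of a \emph{global} good formal structure after one blow-up, which is exactly Theorem~\ref{thm:mochizuki}: the lattice $E$ lives on all of $X$, and its formal germ $\hat E_x$ is controlled by the same Turrittin--Levelt data as one moves $x$ within a stratum. Concretely, over a Zariski-open $D_i^\circ\subset D_i$ away from the other components and from a proper closed subset, the restriction $(\sE,\nabla)$ to the formal neighborhood of $D_i^\circ$ becomes unramified after a single Kummer cover $t_i\mapsto t_i^{m_i}$ with $m_i$ independent of the point — this is the one-variable irregularity being locally constant on the generic part of each component. Deeper strata are handled by descending induction on the dimension of the stratum, intersecting with finitely many components at a time; at each stage only finitely many new ramification indices enter, all dividing products of the previously found ones after the relevant base changes, by the compatibility of Deligne-Malgrange lattices with restriction recorded after Theorem~\ref{thm:mochizuki}.

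\textbf{Main obstacle.} The real work is in the second paragraph: making precise that the ramification indices $m_{i,x}$ are \emph{constructible} — constant on strata of a good stratification of $D$ — rather than just individually finite. This requires knowing that the good formal structure of Theorem~\ref{thm:mochizuki}, which is stated pointwise, in fact spreads out over Zariski-locally closed strata of $D$; equivalently, that the Kummer covers trivializing the turning behavior can be chosen uniformly over each stratum. This is standard in the Kedlaya--Mochizuki theory (the good formal structure exists along the divisor, not just at points), but it is the step that genuinely uses more than the black-box Theorem~\ref{thm:mochizuki} as quoted, and it is where one must be careful that "blow up once to resolve turning points" is compatible with a global stratification.
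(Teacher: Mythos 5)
Your overall architecture is the right one --- cover $D$ by finitely many regions on each of which a single Kummer cover kills the ramification, then take the lcm --- and your first paragraph correctly observes that Lemma~\ref{lem:alg} alone cannot conclude, since the global cover $g$ it produces depends on $x$. But the load-bearing step of your argument, the assertion that the $m_{i,x}$ are constructible (constant on the strata of a stratification of $D$, with the Kummer covers chosen uniformly over each stratum), is exactly the point you flag as the ``main obstacle,'' and it is left as an unproved claim. Theorem~\ref{thm:mochizuki} as quoted is a pointwise statement at closed points, and nothing in the paper's setup gives you for free that the good formal structure ``spreads out'' over Zariski-locally closed strata; your sketch (generic part of each component, descending induction on deeper strata) is plausible but is precisely the content that needs either a proof or a precise citation. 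As it stands the proposal reduces the Claim to a stronger, unproved statement.

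The paper closes this gap by a different and more economical route: it first descends everything to a subfield $k_0\subset k$ of finite type over $\Q$ (the ramification indices are insensitive to the base change back to $k$, even though points and components of $D$ may split), then chooses a complex embedding $k_0\hookrightarrow\C$ and reduces to the analytic setting. There it quotes Mochizuki's explicit statement (\cite[p.\,2827, bullet point]{Moc09}) that every point $z$ of $D$ has an \emph{analytic} neighbourhood $X_z$ carrying a single Kummer cover $\varphi\colon X'_z\to X_z$ with $\varphi^*(\sE,\nabla)$ unramified; hence $m_{z',a}$ divides the ramification index of $\varphi$ along $D_a\cap X_z$ for every $z'\in D\cap X_z$. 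Compactness of $D$ in the analytic topology then yields finitely many such neighbourhoods and the bound. In other words, the uniformity you need is obtained not as Zariski constructibility but as an analytic-local statement already available in the literature, at the price of the reduction to $\C$ --- a step entirely absent from your proposal. To repair your argument you would either have to perform this reduction and cite Mochizuki's analytic result, or supply a genuinely algebraic proof of constructibility (e.g.\ via good formal structures at non-closed points in Kedlaya's framework), which is substantially more work than the rest of your outline suggests.
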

\begin{proof}
As $ \big(U, (\sE,\nabla)\big)$ and $X$ are defined over a field $k_0$ of finite type over $k$, Definition~\ref{defn:mix} over $k_0$ yields ramification indices $m_{\iota,u}(0)$ say for all closed points
$u \in D_\iota$ over $k_0$. Then $D_\iota \otimes_{k_0}k $ and $u\otimes_{k_0} k$ might further split,
with $x, D_i$ being one component. Then $\widehat{h_x}$ over $k$ is the pull-back of $\widehat{h_u}$ over $k_0$ localized at $x$ and $D_i$. Thus the $m_{i,x}$ over $k$ are the same as $m_{u,\iota}$ over $k_0$ for $k$. We now choose a complex embedding $ k_0\hto \C$. Again the $D_\iota \otimes_{k_0} \C $ and $u\otimes_{k_0} \C$ might further split and with the same argument, we just have to show boundedness for the $m_{a,z}$ where $z, D_a$ is one component of $(u, D_\iota)\otimes_{k_0} \C$. By \cite[p.\,2827, bullet point]{Moc09}, for each complex point $z$ (denoted by $P$ in {\it loc.\ cit.}) there exists an analytic neighbourhood~$X_z$ (denoted by $X_P$ in {\it loc.\ cit.}) and a Kummer cover $ \varphi: X'_z\to X_z$ such that $\varphi^*(\sE,\nabla)$ is unramified.
Thus for all
points $z ' \in D\cap X_z$, $m_{z',a}$ divides $m_{\varphi, a}$, the ramification index of~$\varphi$ along $D_a\cap X_z$.
As $D$ is compact in the analytic topology, it is covered by finitely many such analytic open sets $X_z$. This finishes the proof.
\end{proof}

\section{The logarithmic characteristic variety}
In this section, we give the proof of Theorem \ref{thm:main2} by reducing it to Theorem \ref{th:comdiag} below. The sheaf $\sD_X(\log D)\subset\sD_X$ is endowed with the induced filtration $F_\bullet\sD_X(\log D)$ by the order of differential operators. Definitions and results similar to those recalled in Sections \ref{subsec:right} and \ref{subsec:left} hold when $D$ is a normal crossings divisor in $X$ and upon replacing $\sD_X$ with $\sD_X(\log D)$ and correspondingly $T^*X$ with $T^*X(\log D)$, $\omega_X$ with $\omega_X(\log D)=\omega_X(D)$. For a coherent $\sD_X(\log D)$-module~$M$, one obtains
$$\Car_{\log}M \ {\rm in} \ K_0(\sO_{T^*X(\log D)}) .$$
The relation between the approach of Section \ref{subsec:computing} and the logarithmic approach will be obtained by factoring the log zero-section embedding $i_{\log}:X\hto T^*X(\log D)$ as
\[
X\Hto{i}T^*X\To{p}T^*X(\log D).
\]
We now take $(X, (E_0,\ldots, E_n))$ to be a good model of $(U, (\sE,\nabla))$, see Definition \ref{dfn:verygoodmodel}.
In particular, $j_*\sE$ has no turning point along $D$. By definition, $E_i=F_i\sD_X(\log D)\cdot E_0$ ($i\geq0$).

It is convenient to set $E_{-1}=0$. On the other hand, we set $V^0\sE=\bigcup_iE_i$ and $V^{-1}\sE=(V^0\sE)(D)$. The formation of $E_0$, $E_i$, $V^0\sE$ and $V^{-1}\sE$ is compatible with the restriction to the formal neighbourhood~$\hat X_x$ of a closed point $x\in X$.

Let us endow the $\sD_X(\log D)$-module $V^{-1}\sE:=(V^0\sE)(D)$ with the filtration $0=E_{-1}\subset E_0(D)\subset E_1(D)\subset\cdots$, so that its log de~Rham complex $\DR_{\log D}(V^{-1}\sE)$ is filtered by the formula
\[
F_q\DR_{\log D}(V^{-1}\sE)=\Bigl\{0\to E_q(D)\to\Omega^1_X(\log D)\otimes E_{q+1}(D)\to\cdots\Bigr\}
\]
\emph{for every $q\in\Z$}. By Proposition \ref{prop:DRlogEV0}, $\gr_q^F\DR_{\log D}(V^{-1}\sE)$ is acyclic for $q\geq1$ hence, by mimicking in the logarithmic case the argument leading to \eqref{eq:carM}, we find
\begin{equation}\label{eq:Licarlog}
\begin{split}
Li_{\log}^*\Car_{\log}(V^{-1}\sE)&=\sum_{i=0}^n(-1)^{n-i}\Bigl[\omega_X(\log D)^{-1}\otimes\Omega_X^i(\log D)\otimes E_i(D)\Bigr]\\
&=\sum_{i=0}^n(-1)^{n-i}\Bigl[\omega_X^{-1}\otimes\Omega_X^i(\log D)\otimes E_i\Bigr].
\end{split}
\end{equation}
Theorem \ref{thm:main2} immediately follows from the comparison between the characteristic class and the pullback of the log-characteristic class in the Grothendieck group $K_0(\sO_{T^*X})$.

\begin{thm}\label{th:comdiag}
We have the equality in $K_0(\sO_{T^*X})$:
\[
\Car(j_*\sE)=Lp^*\Car_{\log}(V^{-1}\sE).
\]
\end{thm}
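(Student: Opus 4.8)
The plan is to reduce the statement to a local computation in the formal neighbourhood of a closed point $x\in D$, where we may assume (by Proposition~\ref{prop:beta} and the uniqueness in Theorem~\ref{thm:mochizuki}) that $\sE_x$ admits an unramified good Deligne-Malgrange lattice, hence by Lemma~\ref{lem:Ei} that $\hat\sE_x=\bigoplus_{\varphi\in\Phi}(L_\varphi\otimes R_\varphi)$ with $\widehat{E_{i,x}}=\bigoplus_\varphi L_\varphi(i\cdot I_\varphi)\otimes R_\varphi$. Since both sides of the asserted equality are classes in $K_0(\sO_{T^*X})$ and $\sD_X$ is coherent, it suffices to exhibit a coherent $F\sD_X$-filtration on $j_*\sE$ whose graded module, restricted to $T^*X$, matches the pushforward under $p\colon T^*X\to T^*X(\log D)$ of the graded of the $V^{-1}$-filtration on $(V^0\sE)(D)$. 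The natural candidate is exactly the filtration by the order of poles $P^\bullet j_*\sE$ introduced just before Theorem~\ref{thm:P}, namely $P^{-p}j_*\sE=(F_p\sD_X)\cdot(V^0\sE)(D+\Delta)$; the content of Theorem~\ref{thm:P} is precisely that, formally, the stupid filtration of $\DR_{\log D}V^0$ is filtered-quasi-isomorphic to $(\DR\sE_x,P^\bullet)$, which is the de Rham incarnation of the relation between $\gr_P j_*\sE$ on $T^*X$ and $\gr_{V} (V^{-1}\sE)$ on $T^*X(\log D)$.

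First I would set up the two Rees-type constructions explicitly. On the $\sD_X$ side, the coherent $F\sD_X$-filtration $P^\bullet$ gives $\gr_P j_*\sE$, a coherent graded $\gr^F\sD_X=\Sym_{\sO_X}\Theta_X$-module, i.e. a coherent sheaf on $T^*X$; by definition $\Car(j_*\sE)=[\gr_P j_*\sE]$ in $K_0(\sO_{T^*X})$, and this is independent of the choice of coherent filtration. On the $\sD_X(\log D)$ side, the filtration $F_\bullet V^{-1}\sE$ with $F_qV^{-1}\sE=E_q(D)$ is a coherent $F\sD_X(\log D)$-filtration (since $E_{q+1}=F_1\sD_X(\log D)\cdot E_q$), so $\gr^F V^{-1}\sE$ is a coherent $\Sym_{\sO_X}\Theta_X(\log D)$-module, a coherent sheaf on $T^*X(\log D)$, and $\Car_{\log}(V^{-1}\sE)=[\gr^F V^{-1}\sE]$. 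The map $p\colon T^*X\to T^*X(\log D)$ is induced by the inclusion $\Theta_X(\log D)\hookrightarrow\Theta_X$, giving $\Sym\Theta_X(\log D)\to\Sym\Theta_X$, and $Lp^*$ on $K_0$ is tensoring over $\Sym\Theta_X(\log D)$ by $\Sym\Theta_X$. So the claim becomes: in $K_0(\sO_{T^*X})$,
\[
[\gr_P j_*\sE]=\bigl[\gr^F V^{-1}\sE\otim_{\Sym\Theta_X(\log D)}^L\Sym\Theta_X\bigr].
\]
I would prove this by comparing the two Rees modules directly over the formal completion, using the explicit decomposition: for the regular component ($\varphi=0$ or $|I_\varphi|\subsetneq D$ with its regular factor) the pole filtration and the $V$-filtration agree up to shift and $\gr$ is supported on the zero section, so the $Lp^*$ is computed by a Koszul resolution of $\Sym\Theta_X$ over $\Sym\Theta_X(\log D)$ restricted to the zero section; for a purely irregular rank-one factor $L_\varphi$ with $\varphi=u x^{-m}$, multiplication by the symbols of $x_i\partial_{x_i}$ acts invertibly on $\gr$ (this is the computation already used in the proof of Proposition~\ref{prop:DRlogEV0}), so $\gr_P$ is again explicitly identified.

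The main obstacle I anticipate is the exactness needed to pass from an equality of $K$-classes computed via de Rham complexes to the derived-tensor statement: one must know that the ``naive'' pushforward $\gr^FV^{-1}\sE\otimes_{\Sym\Theta_X(\log D)}\Sym\Theta_X$ has no higher Tor, i.e. that the Koszul complex resolving $\Sym\Theta_X$ over $\Sym\Theta_X(\log D)$ stays exact after tensoring with $\gr^FV^{-1}\sE$. This is exactly the kind of statement the paper advertises Appendix~\ref{app:B} for — ``a criterion for exactness of the tensor product of the differential operators over the ones with logarithmic poles.'' So the proof strategy is: (i) reduce to the formal unramified case by Proposition~\ref{prop:beta} and Lemma~\ref{lem:Ei}; (ii) in that case invoke the acyclicity statements of Proposition~\ref{prop:DRlogEV0} and Theorem~\ref{thm:P} together with the Appendix~\ref{app:B} exactness criterion to identify $Lp^*\gr^FV^{-1}\sE$ with $\gr_Pj_*\sE$ as complexes of $\sO_{T^*X}$-modules with the right Euler characteristic; (iii) take classes in $K_0(\sO_{T^*X})$. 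The bookkeeping of the various shifts by $\Delta$ and by $D$, and checking that the chosen filtrations are genuinely coherent $F\sD$-filtrations (not merely $F\sD(\log D)$-filtrations), are the routine points to verify but should cause no real trouble given Facts~\ref{lem:V0Ei}.
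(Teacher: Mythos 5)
Your overall strategy (reduce to the formal unramified case, compare a coherent filtration on $j_*\sE$ with the one induced from $E_\bullet(D)$ on $V^{-1}\sE$, and invoke Appendix~\ref{app:B} for exactness over $\sD_X(\log D)$) has the right shape, but two of your concrete choices break down. First, the pole-order filtration $P^\bullet j_*\sE$ is \emph{not} a coherent $F\sD_X$-filtration in the irregular case: its zeroth term $P^0j_*\sE=(V^0\sE)(D+\Delta)$ is $\sO_X$-coherent only when $(\sE,\nabla)$ is regular singular (Facts~\ref{lem:V0Ei}), so $\gr_Pj_*\sE$ is not a coherent $\sO_{T^*X}$-module and $[\gr_Pj_*\sE]$ does not compute $\Car(j_*\sE)$; the coherence check you defer as ``routine'' is exactly the point that fails. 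The paper instead uses the convolution filtration $F_i\sE=\sum_{j+k\leq i}F_j\sD_X\cdot E_k(D)$, whose terms are $\sO_X$-coherent and which is the image, under the isomorphism $\sD_X\otimes_{\sD_X(\log D)}V^{-1}\sE\To{\sim}\sE$ of Proposition~\ref{prop:isoDDlog}, of the natural filtration on the tensor product. That isomorphism is the essential input missing from your proposal: it guarantees both that this filtration exhausts $j_*\sE$ and that its graded object is $\gr^F(\sD_X\otimes_{\sD_X(\log D)}V^{-1}\sE)$, the object to be compared with $Lp^*\Car_{\log}(V^{-1}\sE)$. Second, the de~Rham-theoretic statements you lean on (Proposition~\ref{prop:DRlogEV0} and Theorem~\ref{thm:P}) only see the de~Rham/Spencer complexes of the graded modules, i.e.\ their derived restriction $Li^*$ to the zero section; they cannot produce an equality in $K_0(\sO_{T^*X})$ on the whole cotangent bundle, which is what Theorem~\ref{th:comdiag} asserts (and which is only afterwards pulled back by $Li^*$ to obtain Theorem~\ref{thm:main2}).

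Finally, the exactness you anticipate --- that $\gr^FV^{-1}\sE\otimes_{\gr^F\sD_X(\log D)}\gr^F\sD_X$ has no higher Tor --- is not what the appendix proves, and you should not expect to prove it directly. The actual argument (Proposition~\ref{prop:isotilde}) establishes Tor-vanishing at the level of Rees modules, $\wsD_X\otimes^L_{\wsD_X(\log D)}\wVsE\simeq\wsD_X\otimes_{\wsD_X(\log D)}\wVsE$, by checking that every subsequence of $(x_1,\dots,x_\ell)$ is a regular sequence for $\wVsE$ (reduced to the unramified case via $G$-invariants and then to each graded piece $(V^0R_\varphi)(D_{pm})$). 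The passage from there to the graded level is Lemma~\ref{lem:tensorReesnonstrict}: the tensor product of Rees modules may fail to be strict, so the graded statement may fail as an isomorphism, but the $z$-torsion contributes $0$ to $K_0(\sO_{T^*X})$, which is precisely why the conclusion is only an equality of classes. Your plan as written conflates these two levels and would get stuck at exactly that point.
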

In order to prove the theorem, one is led to compare $j_*\sE$ with $\sD_X\otimes_{\sD_X(\log D)}V^{-1}\sE$ and to extend this comparison to the graded modules with respect to a coherent $F$-filtration.

\subsection{Base change of the \texorpdfstring{$\sD_X(\log D)$}{D}-module \texorpdfstring{$V^{-1}\sE$}{VE}}

The following proposition is proved in \cite[\S\S5.3.2--5.3.3]{Moc15} in a more general context over the field $\C$.

\begin{proposition}\label{prop:isoDDlog}
The natural morphism $\sD_X\otimes_{\sD_X(\log D)}V^{-1}\sE\to\sE$ is an isomorphism.
\end{proposition}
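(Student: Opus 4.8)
The plan is to work locally on $X$ and, since $\sD_X$ is flat over $\sD_X(\log D)$ only away from $D$ in general, to reduce the statement to the formal neighbourhood $\hat X_x$ of an arbitrary closed point $x\in D$ (over $U$ both sides are visibly $\sE$, since $\sD_X(\log D)=\sD_X$ and $V^{-1}\sE=\sE$ there). By faithful flatness of $\sO_X\to\sO_{\hat X_x}$ it suffices to prove that $\hat\sD_x\otimes_{\hat\sD_x(\log \hat D_x)}\widehat{(V^{-1}\sE)}_x\to\hat\sE_x$ is an isomorphism. Using Definition~\ref{defn:goodlattice}~2) and taking $G$-invariants after a Kummer cover $\hat h_x$ — exactly as in Proposition~\ref{prop:beta}, where it was shown that $(V^0 g^*\sE)^G=V^0\sE$ and that the cover is étale away from $D$, so that $\hat h_x^*\hat\sD_x(\log\hat D_x)=\hat\sD_{x'}(\log\hat D_{x'})$ — one reduces to the case of an unramified good Deligne–Malgrange lattice. (One must check that forming $\sD\otimes_{\sD(\log)}(-)$ commutes with the $G$-invariants; this follows since $|G|$ is invertible, the cover is a base change on the sheaf of operators, and the two functors are compatible.)

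In the unramified case I would use the decomposition $\hat E_x=\bigoplus_{\varphi\in\Phi}(L_\varphi\otimes R_\varphi)$ of Definition~\ref{defn:goodlattice}~1), under which $\widehat{E_i}_x=\bigoplus_\varphi(L_\varphi(i\cdot I_\varphi)\otimes R_\varphi)$ by Lemma~\ref{lem:Ei}, hence $\widehat{(V^{-1}\sE)}_x=\bigoplus_\varphi\bigl(\bigcup_i L_\varphi((i+1)I_\varphi)\bigr)\otimes R_\varphi$. The morphism $\sD_X\otimes_{\sD_X(\log D)}V^{-1}\sE\to\sE$ respects this direct sum, so I can treat each $\varphi$ separately, and since $R_\varphi$ is a Deligne lattice of a regular connection it is harmless to twist by it and assume $\sE=L_\varphi$ is the rank-one connection with $\nabla(1)=d\varphi$. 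Writing $\varphi=u(x)x^{-m}$ with $m\in\N^\ell$, the claim is then that on the rank-one connection $\sO_U e$ with $\nabla e=(d\varphi)e$, the module $\sD_X\otimes_{\sD_X(\log D)}(\bigcup_i x^{-(i+1)|m|}\sO_{\hat X_x}e)$ maps isomorphically to $\sO_Ue=j_*\sO_U\cdot e$. This is a completely explicit computation: on the source, for each index $i$ with $m_i\neq0$, the operator $\partial_{x_i}$ acts on $x_i^{-k}e$ as roughly $(\text{unit})\cdot x_i^{-k-m_i}e$ modulo lower-pole terms (this is the same ``multiplication by $-m_i$ up to units'' phenomenon that drives Proposition~\ref{prop:DRlogEV0}), so $\partial_{x_i}$ and $x_i\partial_{x_i}$ generate, from $V^{-1}\sE$, arbitrarily high poles along $\{x_i=0\}$; running over all $i$ with $m_i\neq0$ and using that along the components of $D$ not in $|I_\varphi|$ the connection is already logarithmic, one sees the map is surjective, and an injectivity/rank count (both sides are, after inverting $x$, free of rank one, and the source has no $x$-torsion) finishes it.

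The main obstacle I expect is the bookkeeping in the unramified rank-one step: one has to show that $\sD_X\cdot V^{-1}\sE$, computed over $\sD_X(\log D)$ so that one is not allowed to freely commute $\partial_{x_i}$ past functions, actually fills up all of $j_*\sE$ and no more — i.e.\ that the natural surjection $\sD_X\otimes_{\sD_X(\log D)}V^{-1}\sE\twoheadrightarrow \sD_X\cdot V^{-1}\sE=\sE$ has no kernel. The cleanest way to control this is to produce an explicit $\sO_{\hat X_x}$-module presentation: order the variables so that $m_1,\dots,m_r\neq 0$ and $m_{r+1},\dots,m_\ell=0$, observe that $\hat\sD_x=\hat\sD_x(\log\hat D_x)[\partial_{x_1},\dots,\partial_{x_r}]$ as a left $\hat\sD_x(\log\hat D_x)$-module (so $\hat\sD_x\otimes_{\hat\sD_x(\log\hat D_x)}\widehat{(V^{-1}\sE)}_x=\bigoplus_{\alpha\in\N^r}\partial^\alpha\widehat{(V^{-1}\sE)}_x$ as an $\sO_{\hat X_x}$-module), and then check that the evaluation map sends this free-ish filtration isomorphically onto the pole filtration of $\hat\sE_x=j_*\sO_U\cdot e$; the needed input is precisely that each $\partial_{x_i}$ ($i\le r$) increases the pole order along $D_i$ by exactly $m_i$ on the graded pieces, with invertible leading term, which is the content of the Newton-polygon/good-lattice hypothesis and is already packaged in the computations preceding Lemma~\ref{lem:alg}. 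Once this graded isomorphism is in place, flatness of $\sO_{\hat X_x}$ over $\sO_{X_x}$ and faithfully flat descent along $\hat h_x$ and along $\sO_X\to\sO_{\hat X_x}$ globalize the statement, and Proposition~\ref{prop:isoDDlog} — and thence, via the filtered refinement alluded to after the statement, Theorem~\ref{th:comdiag} and Theorem~\ref{thm:main2} — follows.
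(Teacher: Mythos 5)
Your reduction to the formal, unramified, single-$\varphi$ case by taking $G$-invariants after a Kummer cover matches the paper's opening moves, but the core of your argument rests on a false structural claim: you assert that $\hat\sD_x=\hat\sD_x(\log\hat D_x)[\partial_{x_1},\dots,\partial_{x_r}]$, so that the tensor product decomposes as $\bigoplus_{\alpha\in\N^r}\partial^\alpha\otimes V^{-1}\hat\sE_x$ as an $\sO_{\hat X_x}$-module. This fails on two counts. First, generating $\sD_X$ over $\sD_X(\log D)$ requires adjoining $\partial_{x_i}$ for \emph{every} component of $D$, not only those in $|I_\varphi|$. Second, and more seriously, $\sD_X$ is not free --- not even flat --- as a right $\sD_X(\log D)$-module (this non-flatness is the entire subject of Appendix~\ref{app:B}): already in one variable one has the relation $\partial^N\cdot x-\partial^{N-1}\cdot(x\partial+1)=0$, a nontrivial $\sD_X(\log D)$-linear relation among the would-be basis elements $\partial^N$ and $\partial^{N-1}$, so the sum $\sum_\alpha\partial^\alpha\,\sD_X(\log D)$ is not direct and the ``free-ish filtration'' you want to compare with the pole filtration of $\hat\sE_x$ does not exist. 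Controlling the relations that do appear in the tensor product is exactly the difficulty the paper confronts with Proposition~\ref{prop:generalV}, whose proof filters by the order in $\partial_{x_1}$ and identifies the graded comparison maps with the operators $\prod_{j=1}^k(\Euler_1+j)$ acting on $\sN/\sN x_1$; their bijectivity is a genuine hypothesis, not a formality.

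A second, related problem is that your surjectivity sketch puts the work in the wrong place. Along the components supporting the pole of $\varphi$ (your $i\le r$), the module $V^{-1}\hat\sE_x=\bigcup_i\bigl(L_\varphi((i+1)I_\varphi)\otimes R_\varphi\bigr)$ \emph{already} contains arbitrarily high poles --- this is the paper's assertion that $V^{-1}\hat\sE_x=(V^{-1}\hat\sE_x)(*D_2)$ --- so nothing needs to be generated there, and that is what makes the passage from $\sD_X(\log D)$ to $\sD_X(\log D_1)$ harmless. The real content lies along the components where $\varphi$ has \emph{no} pole: there $V^{-1}\hat\sE_x$ has only a simple pole, and producing the sections $x_i^{-k}R_\varphi$ of $\hat\sE_x$ surjectively and without creating a kernel requires that $\Euler_i-k$ act invertibly for $k\ge1$, i.e., that the residue eigenvalues of the Deligne extension avoid the positive integers --- the admissibility of $\tau$ enters precisely here, and your proposal never invokes it, dismissing these directions because ``the connection is already logarithmic'' there. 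To repair the argument you would essentially have to reconstruct the paper's two-step factorization through $\sD_X(\log D_1)$ together with the Euler-operator criterion of Proposition~\ref{prop:generalV}.
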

\begin{proof}
The assertion is local formal, and is compatible with base change after a finite extension of the ground field, so we can assume that Assumption \ref{assumption} holds. We can also assume that the unramified good Deligne-Malgrange lattice $E_0$ comes from (see Definition \ref{defn:goodlattice}~2)) has only one component $L_\varphi\otimes R_\varphi$. We keep the notation $D$ for $\hat D_x$. We use the following notation:

\begin{itemize}
\item
Let $\hat{h_x}:\hat{X'_{x'}}\to\hat X_x$ be a finite morphism ramified along $D$ with Galois group $G$, such that $L_\varphi\otimes R_\varphi=\hat{\sE'_{x'}}:=\hat{h_x}^*\hat\sE_x$. Let $(x_1,\dots,x_n)$ and $(x'_1,\dots,x'_n)$ be local coordinates adapted to $\hat{h_x}$ and $D$, so that $D=(x_1\cdots x_\ell)$ and $\hat{h_x}^*(x_i)=x_i^{\prime\rho_i}$ ($i=1,\dots,\ell$). Denoting by $E'_0$ the Deligne-Malgrange lattice of $\hat{\sE'_{x'}}$, we have $E_0=(E'_0)^G$. It follows that $E_p=(E'_p)^G$ and thus $V^0\hat\sE_x=(V^0\hat{\sE'_{x'}})^G$. We then have $V^{-1}\hat\sE_x=(V^0\hat{\sE'_{x'}}(\hat{h_x}^*D))^G$. Moreover, with this identification, the action of $x_i\partial_{x_i}$ is induced by that of $(1/\rho_i)x'_i\partial_{x'_i}$ \hbox{($i=1,\dots,\ell$)}.

\item
We identify $D$ with the support of its pull-back by $\hat{h_x}$ and we decompose it as $D=D_1\cup D_2$, where $D_2$ supports the pole divisor of $\varphi$.
\item
We set $x^{\prime\rho}=x_1^{-\prime\rho_1}\cdots x_\ell^{-\prime\rho_\ell}$ and assume that $D_1=(x_1\cdots x_k)$ with $k\leq\ell$.
\item
We set $\hat \sD_x= \sO_{\hat X_x}\otimes_{\sO_X} \sD_X$.
\end{itemize}

We will prove the following two assertions.
\begin{enumerate}
\renewcommand{\theenumi}{\alph{enumi}}
\item\label{a}
$V^{-1}\hat\sE_x=(V^{-1}\hat\sE_x)(*D_2)$, so that $\hat\sE_x=(V^{-1}\hat\sE_x)(*D_1)$,
\item\label{b}
$\hat\sD_x\otimes_{\hat\sD_x(\log D_1)}V^{-1}\hat\sE_x\simeq (V^{-1}\hat\sE_x)(*D_1)$.
\end{enumerate}

Let us first check that these assertions imply the proposition. We first claim that
\begin{equation}\label{eq:Eiso}
\hat\sD_x(\log D_1)\otimes_{\hat\sD_x(\log D)}V^{-1}\hat\sE_x\to V^{-1}\hat\sE_x \quad\text{is an isomorphism.}
\end{equation}
Indeed, the composed natural morphism
\begin{equation}\label{eq:E''}
V^{-1}\hat\sE_x\to\hat\sD_x(\log D_1)\otimes_{\hat\sD_x(\log D)}V^{-1}\hat\sE_x\to V^{-1}\hat\sE_x
\end{equation}
is an isomorphism, hence the first morphism is injective. Let us check it is onto. Set $D_2=\{x_{k+1}\cdots x_\ell=0\}$. For any $P\in \hat\sD_x(\log D_1)$, there exists a sufficiently large integer $N$ such that $P\cdot\nobreak (x_{k+1}\cdots x_\ell)^N\in \hat\sD_x(\log D)$. Since any section $P\otimes m$ of $\hat\sD_x(\log D_1)\otimes_{\hat\sD_x(\log D)}V^{-1}\hat\sE_x$ can also be written as $P\cdot (x_{k+1}\cdots x_\ell)^N\otimes (x_{k+1}\cdots x_\ell)^{-N}m$ because of \eqref{a}, it is equal to
\[
1\otimes P\cdot (x_{k+1}\cdots x_\ell)^N\bigl[(x_{k+1}\cdots x_\ell)^{-N}m\bigr]=1\otimes Pm\in1\otimes V^{-1}\hat\sE_x,
\]
proving the surjectivity. The first morphism in \eqref{eq:E''} is thus bijective, and so is the second one. We conclude
\begin{align*}
\hat\sD_x\otimes_{\hat\sD_x(\log D)}V^{-1}\hat\sE_x&\simeq\hat\sD_x\otimes_{\hat\sD_x(\log D_1)}\bigl(\hat\sD_x(\log D_1)\otimes_{\hat\sD_x(\log D)}V^{-1}\hat\sE_x\bigr)\\
&\stackrel{\sim}\to\hat\sD_x\otimes_{\hat\sD_x(\log D_1)}V^{-1}\hat\sE_x\quad\text{by \eqref{eq:Eiso}}\\
&\simeq (V^{-1}\hat\sE_x)(*D_1)\simeq\hat\sE_x\quad\text{by \eqref{a}}.
\end{align*}

Let us now prove \eqref{a} and \eqref{b}. For \eqref{b}, we apply Proposition \ref{prop:generalV} to $D_1$ and $V^{-1}\hat\sE_x$ up to side-changing. In the left setting, the operators to be considered in loc.\,cit.\ are (up to sign) $\Euler_i-j$ for $j\geq0$. The properties \ref{prop:generalV}\eqref{prop:generalV1} and \eqref{prop:generalV2} read as follows for $V^0\hat{\sE'_{x'}}(\hat{h_x}^*D)$: For any subset $I\subset\{1,\dots,k\}$, the operators
\begin{itemize}
\item
$x'_i$ ($i\notin I$), ,
\item
$\Euler'_i-\rho_ij$ ($i\in I$, $j\geq0$)
\end{itemize}
act in an invertible way on $(x^{\prime-\rho})V^0(L_\varphi\otimes R_\varphi)/(x'_{i})_{i\in I}(x^{\prime-\rho})V^0(L_\varphi\otimes R_\varphi)$. These properties are easily checked, showing thereby, after taking $G$-invariants, that \eqref{b} holds.

Similarly, it is enough to prove \eqref{a} for $(x^{\prime-\rho})V^0(L_\varphi\otimes R_\varphi)$, for which the assertion is also easy.
\end{proof}

\subsection{Proof of Theorem \ref{th:comdiag}}
We use the notation and the terminology explained in the appendix, Section \ref{subsec:Rees}.

Recall that we denote by $F_\bullet \sD_X(\log D)$ (resp.\ $F_\bullet\sD_X$) the increasing filtration by the order of differential operators. We have in both cases $F_{-1}=0$ and $F_0=\sO_X$. Moreover, when regarding $\sD_X$ as a (left or right) $\sD_X(\log D)$-module, the filtration $F_\bullet\sD_X$ is an $F\sD_X(\log D)$-filtration of $\sD_X$.

Recall that $(E_i(D))_i$ is a coherent filtration of $V^{-1}\sE$ with respect to $F_\bullet \sD_X(\log D)$. The $\sD_X$-module $\sD_X\otimes_{\sD_X(\log D)}V^{-1}\sE$ is then endowed with a natural coherent filtration, namely
\begin{equation}\label{eq:FDVE}
F_i(\sD_X\otimes_{\sD_X(\log D)}V^{-1}\sE)=\sum_{j+k\leq i}\image\Bigl[F_j\sD_X\otimes_{\sO_X}E_k(D)\rightarrow\sD_X\otimes_{\sD_X(\log D)}V^{-1}\sE\Bigr].
\end{equation}
The $\sD_X$-linear isomorphism $\sD_X\otimes_{\sD_X(\log D)}V^{-1}\sE\to\sE$ (Proposition \ref{prop:isoDDlog}) enables us to transport this filtration as a coherent filtration $(F_i\sE)$ of the $\sD_X$\nobreakdash-mod\-ule~$\sE$, by defining
\begin{equation}\label{eq:FsE}
F_i\sE:=\sum_{j+k\leq i}F_j\sD_X\cdot E_k(D),
\end{equation}
where the sum is taken in $\sE$. We simply denote by $\gr (V^{-1}\sE)$ and $\gr\sE$ the graded modules with respect to these coherent filtrations $E_\bullet(D)$ and $F_\bullet\sE$ respectively. By definition we have
\begin{equation}\label{eq:grEgrtensor}
\gr\sE\simeq\gr^F(\sD_X\otimes_{\sD_X(\log D)}V^{-1}\sE).
\end{equation}
The Rees module $\wsE:=\bigoplus_iF_i\sE\cdot z^i\subset\sE\otimes_kk[z]$ with respect to the filtration \eqref{eq:FsE} is a left $\wsD_X$-module (see Section~\ref{subsec:Rees}). Then
\[
Li_z^*\wsE=i_z^*\wsE=\wsE/z\wsE=\gr\sE,
\]
where we regard $\gr\sE$ as a graded $\wsD_X$-module on which $z$ acts by zero, that is, a $\gr^F\sD_X$-module.

Similarly, by using the filtration $(E_i(D))_i$ of $V^{-1}\sE$, we define $\wVsE=\bigoplus_iE_i(D)\cdot z^i$, which is a left $\wsD_X(\log D)$-module. With this notation, \eqref{eq:FDVE} reads (see Lemma \ref{lem:tensorRees} up to changing the side):
\begin{equation}\label{eq:FDVERees}
(\sD_X\otimes_{\sD_X(\log D)}V^{-1}\sE)^\sim=\Bigl(\wsD_X\otimes_{\wsD_X(\log D)} \wVsE\Bigr)\Bigm/z\text{-torsion}.
\end{equation}
According to Lemma \ref{lem:tensorReesnonstrict} (up to side-changing), it is enough to prove the following.
\begin{proposition}\label{prop:isotilde}
We have
\[
\wsD_X\otimes^L_{\wsD_X(\log D)}\wVsE\simeq\wsD_X\otimes_{\wsD_X(\log D)}\wVsE.
\]
\end{proposition}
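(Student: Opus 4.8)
The plan is to prove the statement locally on the formal neighbourhood $\hat X_x$ of each closed point $x\in D$, since $\sO_X\to\sO_{\hat X_x}$ is faithfully flat and the Rees construction commutes with this flat base change. As in the proof of Proposition~\ref{prop:isoDDlog}, I would first reduce, after a finite extension of the ground field (so that Assumption~\ref{assumption} holds) and after passing to the Galois cover $\hat h_x$ and taking $G$-invariants, to the case of a single unramified component $L_\varphi\otimes R_\varphi$. Here the vanishing of higher Tor groups can only be helped by the passage to $G$-invariants, since $|G|$ is invertible in our characteristic-zero rings and taking invariants is exact; so it suffices to prove the analogous strict statement upstairs for $V^{-1}\hat{\sE'_{x'}}=(V^0\hat{\sE'_{x'}})(\hat h_x^*D)$.

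Next I would split $D=D_1\cup D_2$ exactly as in Proposition~\ref{prop:isoDDlog}, with $D_2$ supporting the pole divisor of $\varphi$ and $D_1$ the remaining (regular) part, and factor
\[
\wsD_X\otimes^L_{\wsD_X(\log D)}\wVsE\simeq\wsD_X\otimes^L_{\wsD_X(\log D_1)}\Bigl(\wsD_X(\log D_1)\otimes^L_{\wsD_X(\log D)}\wVsE\Bigr).
\]
For the inner tensor product, the point is that $V^{-1}\hat\sE_x$ is already localized along $D_2$ by assertion~\eqref{a} of Proposition~\ref{prop:isoDDlog}, so that $\hat\sD_x(\log D_1)\otimes_{\hat\sD_x(\log D)}V^{-1}\hat\sE_x\simeq V^{-1}\hat\sE_x$, and I would upgrade this to a Rees-level statement with no $z$-torsion and no higher Tor: the relevant logarithmic Euler operators along $D_2$ act invertibly on the relevant graded pieces (multiplication by the nonzero constants $-m_i$, cf.\ the computation in the proof of Proposition~\ref{prop:DRlogEV0}), which is precisely the hypothesis checked by the exactness criterion of Appendix~\ref{app:B} (Proposition~\ref{prop:generalV}). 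For the outer tensor product, $\wsD_X\otimes^L_{\wsD_X(\log D_1)}V^{-1}\hat\sE_x$, one is in the purely regular situation along $D_1$: the Euler operators $\Euler_i-j$ ($i\le k$, $j\ge0$) act invertibly on the corresponding graded quotients of $(x^{\prime-\rho})V^0(L_\varphi\otimes R_\varphi)$, so again Proposition~\ref{prop:generalV} from Appendix~\ref{app:B} applies and gives the vanishing of the higher Tor terms, hence the asserted isomorphism at the level of Rees modules.

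Assembling these two steps yields $\wsD_X\otimes^L_{\wsD_X(\log D)}\wVsE\simeq\wsD_X\otimes_{\wsD_X(\log D)}\wVsE$ over $\hat X_x$, and by faithfully flat descent the same holds over $X$, which is the claim. The main obstacle, I expect, is the bookkeeping needed to make the Appendix~\ref{app:B} criterion literally applicable at the Rees level rather than just for $\sD$-modules: one must verify that the Rees variable $z$ is accounted for correctly in the filtrations, that the graded quotients on which the Euler operators act invertibly are exactly the ones appearing in the Koszul-type resolution of $\wsD_X$ over $\wsD_X(\log D_i)$, and that the $G$-invariance step (reduction from the cover $\hat X'_{x'}$ back down to $\hat X_x$) commutes with the Rees construction and with taking derived tensor products — all of which is routine given exactness of $(-)^G$, but requires care. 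Once the criterion is in place, the actual invertibility computations are the short ones already performed in the proofs of Propositions~\ref{prop:DRlogEV0} and~\ref{prop:isoDDlog}.
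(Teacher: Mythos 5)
Your overall strategy (formal localization, reduction to the unramified case by taking $G$-invariants, appeal to the Appendix~\ref{app:B} criteria) matches the paper's, but you invoke the wrong criterion at the decisive step, and this is a genuine gap. Proposition~\ref{prop:generalV} is not an exactness criterion for the derived tensor product: its hypotheses are the invertibility of the operators $\Euler_j+k$ on the restrictions $i_{D_I}^*\sN$, and its conclusion is only that the \emph{underived} tensor product $\sN\otimes_{\sD_X(\log D)}\sD_X$ is isomorphic to the localization $\sN(*D)$. That is precisely the content of Proposition~\ref{prop:isoDDlog}, which is a separate ingredient of the proof of Theorem~\ref{th:comdiag}; it says nothing about the vanishing of the higher Tor sheaves, which is what Proposition~\ref{prop:isotilde} asserts. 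The tool the paper actually uses here is the first part of Proposition~\ref{prop:generalgr}, whose hypothesis is of a different nature: one must check that every subsequence of $(x_1,\dots,x_\ell)$ is a regular sequence for the Rees module $\wVsE$. Your Euler-operator computations (multiplication by the nonzero constants $-m_i$, etc.) do not address that hypothesis, so your argument never establishes the vanishing of the higher Tor terms.

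The missing verification is not difficult, and it is how the paper concludes: since $\wVsE=\widetilde{V^0\sE}(D)$, one may work with $\widetilde{V^0\sE}$; after the reduction to the unramified case (where, as you say, the $G$-invariance step commutes with everything), $\widetilde{V^0\sE}$ is the graded module $\bigoplus_{p\geq0}(V^0R_\varphi)(D_{pm})\cdot z^p$, and a subsequence of the coordinates is a regular sequence for it if and only if it is one for each graded piece $(V^0R_\varphi)(D_{pm})$, which holds because these are twists of the locally free lattice $V^0R_\varphi$. Note also that your intermediate factorization through $\wsD_X(\log D_1)\otimes^L_{\wsD_X(\log D)}\wVsE$ would first require showing that this complex is concentrated in degree zero before the outer derived tensor product could be treated; the paper sidesteps this bookkeeping entirely by applying the Koszul-type resolution of Proposition~\ref{prop:generalgr} to the full divisor $D$ at once.
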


\begin{proof}
Since the statement is local, it can be proved after restricting to the formal neighbourhood of any point $x\in D$. Moreover, the formation of $V^{-1}$ and that of $\widetilde{V^{-1}}$ commute with tensoring with the ring $\sO_{\hat X_x}$ since $\hat{E_{0,x}}=\hat{E_0}_x$ (see 1) before Lemma \ref{lem:Ei}). Therefore, in the remaining part of the proof, \emph{we will assume that~$X$ is the formal neighbourhood of $x\in D$, but we will not change the notation for the sake of simplicity}. As both sides of the equation are compatible with a finite base field extension, we can suppose that Assumption \ref{assumption} holds. The proof of the proposition relies on the first part of Proposition \ref{prop:generalgr} of the appendix, after changing the side. It is enough to prove that any subsequence $(x_i)_{i\in I}$ of $(x_1,\dots,x_\ell)$ is a regular sequence for $\wVsE$. Moreover, since $\wVsE=\widetilde{V^0\sE}(D)$, $(x_i)_{i\in I}$ is a regular sequence for $\wVsE$ if and only if it is so for $\widetilde{V^0\sE}$. We thus argue with the latter module.

\subsubsection*{Reduction to the unramified case}
As in the proof of Proposition \ref{prop:isoDDlog}, and with similar notation, we find that $\widetilde{V^0\sE}=(\widetilde{V^0\sE'})^G$, and we can assume that $\sE'\simeq L_\varphi\otimes R_\varphi$. If we know that $(x^{\prime \rho_i}_i)_{i\in I}$ is a regular sequence for $\widetilde{V^0\sE'}$, then $(\hat{h_x}^*(x_i))_{i\in I}$ is a regular sequence, and by taking $G$-invariants we conclude that $(x_i)_{i\in I}$ is a regular sequence for $\widetilde{V^0\sE}$, as wanted.

\subsubsection*{The unramified case}
We now assume that $\sE$ is unramified. Let $\varphi(x)=u(x)/x^m$, where $m=(m_1,\dots,m_\ell,0,\dots,0)$, with $m_i\geq1$ for $i=k+1,\dots,\ell$ and $u(x)\in\sO_X$ with $u(0)\neq0$. Set $\sL_\varphi=(\sO_X(*D),d+d\varphi)$ and $D_m=\sum_{i=1}^\ell m_iD_i$. Then $L_{\varphi,p}=\sO_X(D_{pm})$ ($p\geq0$) and $E_p(D)=(V^0\sR)(D_{pm})$, so that, forgetting the connection, we have
\begin{equation}\label{eq:unramif}
\widetilde{V^0\sE}=\Bigl(\bigoplus_{p\geq0}(V^0R_\varphi)(D_{pm})\cdot z^p\Bigr)\subset R_\varphi[z].
\end{equation}
Since $\widetilde{V^0\sE}$ is graded, $(x_i^{\rho_i})_{i\in I}$ is a regular sequence for $\widetilde{V^0\sE}$ if and only if it is so for each graded piece $(V^0R_\varphi)(D_{pm})$. This holds since $(x_i^{\rho_i})_{i\in I}$ is a regular sequence for $V^0R_\varphi$.
\end{proof}

\newpage
\appendix
\begin{center}
\MakeUppercase{Appendix}\\[5pt]
\textsc{by Claude Sabbah}
\end{center}

\section{A reminder on characteristic varieties}\label{app:A}
In this section we reproduce \cite[\S A5]{MHM}. Recall that $n=\dim X$.

\subsection{Right \texorpdfstring{$\sD_X$}{DX}-modules}\label{subsec:right}
Recall that if $N$ is a right $\sD_X$-module, the Spencer complex of~$N$ is the complex
\[
\mathrm{Sp}(N):=\{0\ra N\otimes_{\sO_X}\wedge^n\Theta_X\To\delta\cdots\To\delta N\otimes_{\sO_X}\Theta_X\To\delta\underset\bullet N\ra0\},
\]
where the $_\bullet$ indicates the term in degree zero and where the differential $\delta$ is the $k$-linear map given for $m\in N$ by ($\hat\xi_i$ means omitting $\xi_i$ in the wedge product)
\begin{multline}\label{eq:delta}
m\otimes(\xi_1\wedge\cdots\wedge\xi_k)\Mto{\delta} \sum_{i=1}^{k}(-1)^{i-1}(m\xi_i)\otimes(\xi_1\wedge\cdots\wedge\widehat{\xi_i}\wedge\cdots\wedge\xi_k)\\[-3pt]
+\sum_{i<j}(-1)^{i+j}m\otimes\bigl([\xi_i,\xi_j]\wedge\xi_1\wedge\cdots\wedge\widehat{\xi_i}\wedge\cdots\wedge\widehat{\xi_j}\wedge\cdots\wedge\xi_k\bigr).
\end{multline}
Regarding $\sD_X$ as a right $\sD_X$-module, the Spencer complex $\mathrm{Sp}(\sD_X)$ is a complex in the category of left $\sD_X$-modules, by using the left $\sD_X$-module structure on $\sD_X$, and is a resolution of $\sO_X$ as a left $\sD_X$-module, by locally free $\sD_X$-modules. Moreover, there is a natural isomorphism
\begin{equation}\label{eq:SpencerN}
\mathrm{Sp}(N)\simeq N\otimes_{\sD_X}\mathrm{Sp}(\sD_X)=N\otimes_{\sD_X}^L\mathrm{Sp}(\sD_X).
\end{equation}
One concludes that
\begin{equation}\label{eq:SpencerNO}
N\otimes^L_{\sD_X}\sO_X\simeq N\otimes_{\sD_X}\mathrm{Sp}(\sD_X)\simeq\mathrm{Sp}(N).
\end{equation}
If $N$ is endowed with an $F\sD_X$-filtration, the formula for the differential $\delta$ shows that $\delta(F_pN\otimes\wedge^k\Theta_X)\subset F_{p+1}N\otimes\wedge^{k-1}\Theta_X$, and the Spencer complex is filtered by the formula
\begin{equation}\label{eq:SpencerFN}
F_p\mathrm{Sp}(N)=\{0\ra F_{p-n}N\otimes_{\sO_X}\wedge^n\Theta_X\To\delta\cdots\To\delta F_{p-1}N\otimes_{\sO_X}\Theta_X\To\delta F_pN\ra0\}.
\end{equation}
The graded complex $\gr^F\mathrm{Sp}(N)$ is thus expressed as
\begin{multline}\label{eq:SpencergrFN}
\gr^F\mathrm{Sp}(N)\\[-5pt]
=\{0\ra \gr^FN\otimes_{\sO_X}\wedge^n\Theta_X\To{\gr_1\delta}\cdots\To{\gr_1\delta} \gr^FN\otimes_{\sO_X}\Theta_X\To{\gr_1\delta} \gr^FN\ra0\},
\end{multline}
where $\gr_1\delta$ is defined by the first part of \eqref{eq:delta}, that is,
\begin{equation}
m\otimes(\xi_1\wedge\cdots\wedge\xi_k)\Mto{\gr_1\delta} \sum_{i=1}^{k}(-1)^{i-1}(\xi_i m)\otimes(\xi_1\wedge\cdots\wedge\widehat{\xi_i}\wedge\cdots\wedge\xi_k),
\end{equation}
where $\xi_i$ is now regarded as a linear form on $T^*X$.

We now explain that \eqref{eq:SpencerN} is compatible with taking $\gr^F$. Setting $\sO_{T^*X}:=\mathrm{Sym}\Theta_X$, one similarly regards $\gr^F\mathrm{Sp}(\sD_X)$ as a resolution of $\sO_X$ as an $\sO_{T^*X}$-module: one has
\[
\gr^F\mathrm{Sp}(\sD_X)=\{0\ra \sO_{T^*X}\otimes_{\sO_X}\wedge^n\Theta_X\To{\gr_1\delta}\cdots\To{\gr_1\delta} \sO_{T^*X}\otimes_{\sO_X}\Theta_X\To{\gr_1\delta} \sO_{T^*X}\ra0\}.
\]
Grading \eqref{eq:SpencerN} gives
\begin{equation}
\gr^F\mathrm{Sp}(N)\simeq \gr^FN\otimes_{\sO_{T^*X}}\gr^F\mathrm{Sp}(\sD_X)=\gr^FN\otimes^L_{\sO_{T^*X}}\gr^F\mathrm{Sp}(\sD_X),
\end{equation}
since each term of the complex $\gr^F\mathrm{Sp}(\sD_X)$ is $\sO_{T^*X}$-locally free. The graded analogue of~\eqref{eq:SpencerNO} is now
\begin{equation}\label{eq:SpencerNOgr}
\gr^FN\otimes^L_{\sO_{T^*X}}\sO_X\simeq\Car N\otimes^L_{\sO_{T^*X}}\gr^F\mathrm{Sp}(\sD_X) \simeq\gr^F\mathrm{Sp}(N)
\end{equation}
in the bounded derived category of $\sO_X$-modules. In the previous formulas, one forgets the information given by the grading (e.g.\ $\gr^FN=\bigoplus_pF_pN/F_{p-1}N$), as it is not to be used. In $K_0(\sO_X)$, \eqref{eq:SpencerNOgr} reads
\begin{equation}
Li^*\Car N=[\gr^F\mathrm{Sp}(N)].
\end{equation}

\subsection{Left \texorpdfstring{$\sD_X$}{sD}-modules}\label{subsec:left}
If $M$ is a left $\sD_X$-module, its de~Rham complex is defined as
\begin{equation}
\DR M=\{0\ra \underset\bullet M\To\nabla\Omega^1_X\otimes M\to\cdots\to\Omega^n_X\otimes M\ra0\}.
\end{equation}
Recall the $_\bullet$ indicated the degree zero term of the complex.
If $M$ is endowed with a coherent filtration $F_\bullet M$, we set
\begin{equation}
F_p\DR M=\{0\ra F_pM\To\nabla\Omega^1_X\otimes F_{p+1}M\to\cdots\to\Omega^n_X\otimes F_{p+n}M\ra0\}
\end{equation}
and we have
\begin{equation}
\gr^F\DR M=\{0\ra \gr^FM\To{\gr_1\nabla}\Omega^1_X\otimes \gr^FM\to\cdots\to\Omega^n_X\otimes \gr^FM\ra0\},
\end{equation}
which is a bounded complex in the category of $\sO_X$-modules.

The side-changing functor is defined by $N=\omega_X\otimes_{\sO_X}M$. Then $N$ is a right $\sD_X$\nobreakdash-module, and we have natural identifications (see e.g.\ \cite[Ex.\,A.5.9]{MHM})
\begin{equation}\label{eq:SpDR}
\mathrm{Sp}(N)\simeq\DR(M)[n], \quad\gr^F\mathrm{Sp}(N)\simeq\gr^F\DR(M)[n].
\end{equation}
Now, \eqref{eq:SpencerNOgr} gives
\begin{equation}
\gr^F(\omega_X\otimes_{\sO_X}M)\otimes^L_{\sO_{T^*X}}\sO_X \simeq\gr^F\mathrm{Sp}(\omega_X\otimes_{\sO_X}M)\simeq\gr^F\DR(M)[n]
\end{equation}
It follows that
\begin{equation}
\gr^FM\otimes^L_{\sO_{T^*X}}\sO_X\simeq\omega_X^{-1}\otimes\gr^F\DR(M)[n],
\end{equation}
and therefore, in $K_0(\sO_X)$,
\begin{equation}\label{eq:LiCarM}
Li^*\Car M=(-1)^n[\omega_X^{-1}\otimes\gr^F\DR(M)].
\end{equation}

\subsection{Computing \texorpdfstring{$Li^*\Car M$}{LiCarM}}\label{subsec:computing}
Let $M$ (resp.\ $N$) be a coherent left (resp.\ right) $\sD_X$-module and let $F_\bullet M$ (resp.\ $F_\bullet N$) be a coherent filtration. It is well known that there exists $p_0$ such that
\begin{equation}\label{eq:acyclicity}
\text{$\gr_p^F\DR M$ (resp.\ $\gr_p^F\mathrm{Sp}(N)$) is acyclic for any $p> p_0$.}
\end{equation}
Indeed, one first proves this for $N=\sD_X$, for which one knows that $\gr_p^F\mathrm{Sp}(\sD_X)$ is acyclic for any $p\geq1$ (see e.g.\ \cite[Ex.\,A.5.4(3)]{MHM}) and then for any coherent $(N,F_\bullet N)$ by using a suitable resolution of it by right filtered $\sD_X$-modules of the form $L\otimes_{\sO_X}\sD_X$, where $L$ is $\sO_X$-coherent. One deduces the lemma for $(M,F_\bullet M)$ by using the side-changing formulas \eqref{eq:SpDR}.

One sets $\gr_{\leq p}^F M:=\bigoplus_{q\leq p}\gr_q^FM$ and
\begin{multline}
\gr_{\leq p}^F\DR M\\[-3pt]
:=\{0\ra\gr_{\leq p}^F M\To{\gr_1\nabla}\Omega_X^1\otimes\gr_{\leq p+1}^F M\to\cdots\to\Omega_X^n\otimes\gr_{\leq p+n}^F M\ra0\}.
\end{multline}
The acyclicity property for $p>p_0$ implies that the inclusion of complexes
\[
\gr_{\leq p_0}^F\DR M\hto\gr^F\DR M
\]
is a quasi-isomorphism. Then \eqref{eq:LiCarM} reads
\begin{equation}\label{eq:LiCarMp0}
Li^*\Car M=(-1)^n[\omega_X^{-1}\otimes\gr^F_{\leq p_0}\DR(M)],
\end{equation}
that is
\begin{equation}\label{eq:carM}
\begin{split}
Li^*\Car M&=\sum_{i=1}^n(-1)^{n-i}[\omega_X^{-1}\otimes\Omega_X^i\otimes \gr^F_{\leq p_0+i}M]\\
&=\sum_{i=1}^n(-1)^{n-i}[\omega_X^{-1}\otimes\Omega_X^i\otimes F_{p_0+i}M],
\end{split}
\end{equation}
where the latter equality follows from $[\gr_{\leq p}^F M]=[F_pM]$ in $K_0(\sO_X)$.

However, it is in general difficult to determine the smallest $p_0$. One can nevertheless always assume that $F_{p_0+i}M=F_i\sD_X\cdot F_{p_0}M$ by taking $p_0$ large enough.

\section{ Defect of flatness of the differential operators on the~differential operators with logarithmic poles}\label{app:B}
In this section,
we work with right $\sD$-modules.

\subsection{Rees modules}\label{subsec:Rees}
The sheaf of rings $\sD_X$ is not flat over $\sD_X(\log D)$. In this appendix, we make explicit conditions on a $\sD_X(\log D)$-module $\sN$ so that, nevertheless, tensoring with~$\sN$ is exact. We enlarge the point of view to \emph{filtered} $\sD_X(\log D)$-modules by considering the associated Rees modules, in order to control the graded modules.

Recall that, to any object $M$ of an abelian category of (sheaves of) $k$-vector spaces endowed with an increasing filtration $F_\bullet M$, we associate its Rees module by introducing a new variable $z$ and by defining $R_FM$ as $\bigoplus_pF_pMz^p\subset k[z,z^{-1}]\otimes_k M$. Then, since the filtration in increasing, $R_FM$ is a $k[z]$-module. We apply this construction to $(\sD_X,F_\bullet\sD_X)$ and get
\[
\wsD_X:=\bigoplus_{p \in \N} F_p\sD_X\cdot z^p\subset\sD_X[z]:=\sD_X\otimes_kk[z],
\]
that we regard as a \emph{graded ring}, which is locally free over the graded ring $\wsO_X:=\sO_X[z]$ and generated as an $\wsO_X$-algebra by $\wsO_X$ and $\wTheta_X:=\Theta_X\otimes_kzk[z]$. In particular, $z$ is a central element in $\wsD_X$. Starting with $\sD_X(\log D)$, we define similarly $\wsD_X(\log D)$ and $\wTheta_X(\log D)$. We will work in the category $\Mod_\gr$ of \emph{graded modules} over these graded rings, and morphisms will be similarly graded of degree zero. We always assume that \emph{the grading is bounded from below}, that is, if $\wsN=\bigoplus_p(\wsN)_p$, we have $(\wsN)_p=0$ for $p\ll0$.

Let us note that
\[
k[z,z^{-1}]\otimes_{k[z]}\wsD_X=k[z,z^{-1}]\otimes_k\sD_X
\]
and a similar property for $\wsD_X(\log D)$.

Let $\wsN=\bigoplus_p(\wsN)_p$ be a graded $\wsD_X(\log D)$-module. Let $(\wsN)'_p$ denote the image of $(\wsN)_p$ in $\wsN[z^{-1}]:=k[z,z^{-1}]\otimes_{k[z]}\wsN$. Then $(\wsN)'_pz^{-p}\subset (\wsN)'_{p+1}z^{-p-1}$ and
\begin{equation}\label{eq:sN}
\sN:=\bigcup_p(\wsN)'_pz^{-p}\subset\wsN[z^{-1}]
\end{equation}
is a $\sD_X$-module, that we call the \emph{underlying $\sD_X$-module of $\wsN$}. We have
\[
\wsN[z^{-1}]=k[z,z^{-1}]\otimes_{k}\sN.
\]
We denote by $i_z^*$ the functor $\bullet\otimes_{k[z]}(k[z]/zk[z])$ and by $Li_z^*$ the associated derived functor. We have $i_z^*\wsD_X=\gr^F\sD_X$. We extend $i_z^*$ with the same notation as a functor $\Mod_\gr(\wsD_X)\to\Mod(\gr^F\sD_X)$ and $\catD^\rb_\gr(\wsD_X)\to\catD^\rb(\gr^F\sD_X)$. We also use the same notation when considering $\sD_X(\log D)$ instead of $\sD_X$.

We say that $\wsN$ is \emph{strict} if $z:\wsN\to\wsN$ is injective (equivalently, since $\wsN$ is graded, $\wsN$ is $k[z]$\nobreakdash-flat). A~graded $\wsD_X(\log D)$-module $\wsN$ is strict if and only if there exists an $F\sD_X(\log D)$-fil\-tration $F_\bullet\sN$ of the associated $\sD_X$-module $\sN$ defined by \eqref{eq:sN} such that $\wsN=\bigoplus_pF_p\sN z^p=:R_F\sN$ (the \emph{Rees module} of~$\sN$ with respect to $F_\bullet\sN$). Indeed, strictness is equivalent to the property that, for each~$p$,
\begin{equation}\label{eq:checkstrict}
z:(\wsN)_p\to(\wsN)_{p+1}\text{ is injective}.
\end{equation}
Setting $F_p\sN=(\wsN)'_pz^{-p}$ one has $\wsN=\bigoplus_pF_p\sN z^p$. Moreover, by strictness,
\[
Li_z^*\wsN=i_z^*\wsN=\gr^F\sN.
\]
A~similar characterization of strictness holds for graded $\wsD_X(\log D)$-modules.

\begin{lemma}\label{lem:tensorRees}
Assume $\wsN$ is a strict graded $\wsD_X(\log D)$-module $R_F\sN$. Then the quotient of the graded module $\wsN\otimes_{\wsD_X(\log D)}\wsD_X$ by its $z$-torsion is the Rees module of the filtration $F_\bullet(\sN\otimes_{\sD_X(\log D)}\sD_X)$ defined by
\[
F_p(\sN\otimes_{\sD_X(\log D)}\sD_X)=\image\Bigl[F_p(\sN\otimes_{\sO_X}\sD_X)\to(\sN\otimes_{\sD_X(\log D)}\sD_X)\Bigr],
\]
with
\[
F_p(\sN\otimes_{\sO_X}\sD_X)=\sum_q F_q\sN\otimes_{\sO_X}F_{p-q}\sD_X\subset\sN\otimes_{\sO_X}\sD_X.
\]
Moreover, we have
\begin{equation}\label{eq:tensorRees1}
Li_z^*\Bigl[(\wsN\otimes_{\wsD_X(\log D)}\wsD_X)\bigm/z\textup{-torsion}\Bigr]\simeq\gr^F(\sN\otimes_{\sD_X(\log D)}\sD_X)
\end{equation}
and
\begin{equation}\label{eq:tensorRees2}
Li_z^*(\wsN\otimes^L_{\wsD_X(\log D)}\wsD_X)\simeq\gr^F\sN\otimes^L_{\gr^F\sD_X(\log D)}\gr^F\sD_X.
\end{equation}
\end{lemma}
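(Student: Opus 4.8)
The plan is to prove Lemma~\ref{lem:tensorRees} by a direct analysis of the Rees-module construction applied to the standard (filtered) change-of-rings formula. First I would recall that, since $\wsD_X$ is flat as a \emph{right} $\wsD_X(\log D)$-module after inverting $z$ (indeed $\wsD_X[z^{-1}]=k[z,z^{-1}]\otimes_k\sD_X$ and likewise for $\log D$), the $z$-torsion of $\wsN\otimes_{\wsD_X(\log D)}\wsD_X$ is precisely the kernel of the localization map to $(\wsN\otimes_{\wsD_X(\log D)}\wsD_X)[z^{-1}]=k[z,z^{-1}]\otimes_k(\sN\otimes_{\sD_X(\log D)}\sD_X)$. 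Hence the quotient by $z$-torsion is a strict graded $\wsD_X$-module whose underlying $\sD_X$-module (in the sense of \eqref{eq:sN}) is $\sN\otimes_{\sD_X(\log D)}\sD_X$; by the characterization of strictness recalled before the lemma, it is therefore the Rees module of \emph{some} $F\sD_X$-filtration on $\sN\otimes_{\sD_X(\log D)}\sD_X$, and the only thing left is to identify that filtration.

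Next I would identify the filtration explicitly. Since $\wsN=R_F\sN=\bigoplus_p F_p\sN\,z^p$ and $\wsD_X=\bigoplus_q F_q\sD_X\,z^q$, the graded tensor product $\wsN\otimes_{\wsO_X}\wsD_X$ is $\bigoplus_p\bigl(\sum_{q}F_q\sN\otimes_{\sO_X}F_{p-q}\sD_X\bigr)z^p$, which is exactly $R_F(\sN\otimes_{\sO_X}\sD_X)$ for the filtration $F_p(\sN\otimes_{\sO_X}\sD_X)=\sum_q F_q\sN\otimes_{\sO_X}F_{p-q}\sD_X$ stated in the lemma. Passing to the quotient that imposes the $\sD_X(\log D)$-bilinearity relations (i.e.\ $\xi m\otimes P=m\otimes\xi P$ for $\xi$ a log vector field) is a graded quotient, so degree $p$ of the result is the image of $F_p(\sN\otimes_{\sO_X}\sD_X)z^p$ in $(\sN\otimes_{\sD_X(\log D)}\sD_X)z^p$; modding out the $z$-torsion only removes elements that die after multiplying by a power of $z$, i.e.\ it replaces each graded piece by its image in the $z^{-1}$-localization, which is exactly the stated $F_p(\sN\otimes_{\sD_X(\log D)}\sD_X)=\image[F_p(\sN\otimes_{\sO_X}\sD_X)\to\sN\otimes_{\sD_X(\log D)}\sD_X]$. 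This proves the first assertion, and \eqref{eq:tensorRees1} is then just the identity $Li_z^*R_F(\cdot)=i_z^*R_F(\cdot)=\gr^F(\cdot)$ for a strict Rees module, already recorded in the text.

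Finally, for \eqref{eq:tensorRees2} I would take a graded free resolution $\wsL_\bullet\to\wsN$ by graded $\wsD_X(\log D)$-modules (each $\wsL_i$ a direct sum of shifts of $\wsD_X(\log D)$), so that by definition $\wsN\otimes^L_{\wsD_X(\log D)}\wsD_X\simeq\wsL_\bullet\otimes_{\wsD_X(\log D)}\wsD_X$, a complex of graded free $\wsD_X$-modules. Applying $i_z^*$ term-by-term to this complex of $k[z]$-flat (indeed free) modules computes $Li_z^*$, and $i_z^*(\wsD_X(\log D))=\gr^F\sD_X(\log D)$, $i_z^*\wsD_X=\gr^F\sD_X$, while $i_z^*\wsL_\bullet$ is a complex of graded free $\gr^F\sD_X(\log D)$-modules; strictness of $\wsN$ (i.e.\ $\wsN=R_F\sN$) gives $i_z^*\wsL_\bullet\simeq\gr^F\sN$ in the derived sense, because $\wsL_\bullet\to\wsN$ being a quasi-isomorphism of $k[z]$-flat modules stays a quasi-isomorphism after $\otimes_{k[z]}k[z]/z$. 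Hence $Li_z^*(\wsN\otimes^L_{\wsD_X(\log D)}\wsD_X)\simeq(\gr^F\sN)\otimes^L_{\gr^F\sD_X(\log D)}\gr^F\sD_X$, which is \eqref{eq:tensorRees2}. The one genuinely delicate point — the main obstacle — is bookkeeping the interaction between the $z$-torsion quotient and the non-flatness of $\sD_X$ over $\sD_X(\log D)$: one must be careful that $Li_z^*$ of the \emph{underived} tensor product is \emph{not} $\gr^F\sN\otimes^L\gr^F\sD_X$ in general (that is exactly why both \eqref{eq:tensorRees1} and \eqref{eq:tensorRees2} are stated separately), and the cleanest way to keep this straight is to never leave the world of graded $k[z]$-modules until the very last step, using throughout that for a bounded-below graded $k[z]$-module, $k[z]$-flatness is equivalent to strictness and is checked degreewise via \eqref{eq:checkstrict}.
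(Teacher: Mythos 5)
Your argument is correct and follows essentially the same route as the paper: the first assertion and \eqref{eq:tensorRees1} via the strictness of $\wsN\otimes_{\wsO_X}\wsD_X$ and the surjection onto the $z$-torsion-free quotient, and \eqref{eq:tensorRees2} by computing $Li_z^*$ on a flat graded resolution whose terms and cohomology are $k[z]$-flat. The only (immaterial) difference is that you resolve $\wsN$ by graded free $\wsD_X(\log D)$-modules, whereas the paper resolves $\wsD_X$ as a left graded $\wsD_X(\log D)$-module by flat modules and invokes associativity of the derived tensor product; both reduce to the same strictness check.
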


\begin{proof}
Since $\wsD_X$ is $\wsO_X$-locally free and since $\wsN$ is strict, $\wsN\otimes_{\wsO_X}\wsD_X$ is also strict, hence is the Rees module associated to a filtration $F_\bullet(\sN\otimes_{\sO_X}\sD_X)$. This is nothing but the filtration defined in the lemma.

There is a natural surjective composed morphism
\[
\wsN\otimes_{\wsO_X}\wsD_X\to\wsN\otimes_{\wsD_X(\log D)}\wsD_X\to(\wsN\otimes_{\wsD_X(\log D)}\wsD_X)/z\text{-torsion}.
\]
The last term is isomorphic to the Rees module of a filtration $F_\bullet(\sN\otimes_{\sD_X(\log D)}\sD_X)$ for which \eqref{eq:tensorRees1} holds and, by considering the component of degree $p$ for each $p$, one checks that this is the filtration defined in the lemma.

For \eqref{eq:tensorRees2}, we construct a graded resolution of $\wsD_X$ as a left graded $\wsD_X(\log D)$-module by flat graded $\wsD_X(\log D)$-modules~$L^\bullet$. Since $\wsD_X$ is $k[z]$-flat, one checks inductively that each differential $d_i:L^i\to L^{i-1}$ is strict, i.e., its cokernel is $k[z]$-flat. It follows that $i_z^*L^\bullet$ is a resolution of $i_z^*\wsD_X=\gr^F\sD_X$ by flat $\gr^F\sD_X(\log D)$-modules. We interpret $Li_z^*(\bullet)$ as $(k[z]/zk[z])\otimes^L_{k[z]}\bullet$. The associativity property as given e.g.\ in \cite[p.\,240]{Kashiwara03} leads to
\[
Li_z^*(\wsN\otimes^L_{\wsD_X(\log D)}\wsD_X)\simeq (Li_z^*\wsN)\otimes^L_{\wsD_X(\log D)}\wsD_X\simeq\gr^F\sN\otimes^L_{\wsD_X(\log D)}\wsD_X\simeq\gr^F\sN\otimes_{\wsD_X(\log D)}L^\bullet.
\]
Since $z$ acts by zero on $\gr^F\sN$, we have $\gr^F\sN\otimes_{\wsD_X(\log D)}L^\bullet\simeq\gr^F\sN\otimes_{\wsD_X(\log D)}i_z^*L^\bullet$, and the latter term is nothing but $\gr^F\sN\otimes_{\gr^F\sD_X(\log D)}i_z^*L^\bullet$. The choice of $L^\bullet$ implies that this is a realization of $\gr^F\sN\otimes^L_{\gr^F\sD_X(\log D)}\gr^F\sD_X$.
\end{proof}

As a consequence, we obtain the following criterion for a right graded $\sD_X(\log D)$-module~$\wsN$.

\begin{lemma}\label{lem:tensorReesstrict}
Assume that
\begin{enumerate}
\item\label{lem:tensorReesstrict1}
$\wsN$ is strict,
\item\label{lem:tensorReesstrict2}
$\wsN\otimes^L_{\wsD_X(\log D)}\wsD_X\simeq\wsN\otimes_{\wsD_X(\log D)}\wsD_X$,
\item\label{lem:tensorReesstrict3}
$\wsN\otimes_{\wsD_X(\log D)}\wsD_X$ is strict.
\end{enumerate}
Then we have
\[
\gr^F(\sN\otimes_{\sD_X(\log D)}\sD_X)\simeq\gr^F\sN\otimes_{\gr^F\sD_X(\log D)}\gr^F\sD_X\simeq\gr^F\sN\otimes^L_{\gr^F\sD_X(\log D)}\gr^F\sD_X.
\]
\end{lemma}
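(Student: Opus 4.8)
The plan is to combine the three hypotheses with Lemma~\ref{lem:tensorRees}, which already provides the bridge between the Rees-module world and the graded world. The strategy is essentially a formal manipulation of the isomorphisms \eqref{eq:tensorRees1} and \eqref{eq:tensorRees2}, exploiting strictness to replace $Li_z^*$ by $i_z^*$ wherever it occurs.

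First I would observe that by hypothesis~\eqref{lem:tensorReesstrict1}, $\wsN$ is a Rees module $R_F\sN$, so Lemma~\ref{lem:tensorRees} applies verbatim. By hypothesis~\eqref{lem:tensorReesstrict3}, the graded module $\wsN\otimes_{\wsD_X(\log D)}\wsD_X$ is strict, which means it has no $z$-torsion; therefore it coincides with its quotient by $z$-torsion, and \eqref{eq:tensorRees1} becomes
\[
i_z^*(\wsN\otimes_{\wsD_X(\log D)}\wsD_X)\simeq\gr^F(\sN\otimes_{\sD_X(\log D)}\sD_X),
\]
where moreover, by strictness again, $Li_z^*$ on the left may be replaced by $i_z^*$. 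Next I would use hypothesis~\eqref{lem:tensorReesstrict2} to rewrite the left-hand side of \eqref{eq:tensorRees2}: since $\wsN\otimes^L_{\wsD_X(\log D)}\wsD_X\simeq\wsN\otimes_{\wsD_X(\log D)}\wsD_X$, the left side of \eqref{eq:tensorRees2} is $Li_z^*(\wsN\otimes_{\wsD_X(\log D)}\wsD_X)$, which by strictness of that module (hypothesis~\eqref{lem:tensorReesstrict3}) equals $i_z^*(\wsN\otimes_{\wsD_X(\log D)}\wsD_X)$, i.e.\ $\gr^F(\sN\otimes_{\sD_X(\log D)}\sD_X)$ by the previous display. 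Since the right-hand side of \eqref{eq:tensorRees2} is $\gr^F\sN\otimes^L_{\gr^F\sD_X(\log D)}\gr^F\sD_X$, chaining these identifications yields
\[
\gr^F(\sN\otimes_{\sD_X(\log D)}\sD_X)\simeq\gr^F\sN\otimes^L_{\gr^F\sD_X(\log D)}\gr^F\sD_X.
\]
Finally, the underived tensor product $\gr^F\sN\otimes_{\gr^F\sD_X(\log D)}\gr^F\sD_X$ is the $0$-th cohomology (a quotient) of the derived one, and the zig-zag of isomorphisms above shows that the derived tensor product is concentrated in degree zero and equals $\gr^F(\sN\otimes_{\sD_X(\log D)}\sD_X)$, which surjects onto $\gr^F\sN\otimes_{\gr^F\sD_X(\log D)}\gr^F\sD_X$; comparing, the natural surjection $\gr^F(\sN\otimes_{\sD_X(\log D)}\sD_X)\to\gr^F\sN\otimes_{\gr^F\sD_X(\log D)}\gr^F\sD_X$ must be an isomorphism, giving both stated identifications at once.

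The main obstacle, such as it is, is purely bookkeeping: one must be careful that the functor $i_z^*$ genuinely computes $Li_z^*$ only on strict (i.e.\ $k[z]$-flat) objects, so the three strictness/flatness inputs must be invoked at exactly the right places — on $\wsN$ to write it as a Rees module and apply the lemma, on $\wsN\otimes_{\wsD_X(\log D)}\wsD_X$ to drop the $z$-torsion and to replace $Li_z^*$ by $i_z^*$, and on the derived-equals-underived hypothesis to know that \eqref{eq:tensorRees2} is describing the same object. No genuinely new computation is needed; the content is that Lemma~\ref{lem:tensorRees} has already done the work, and this lemma merely records the clean consequence under the added strictness hypotheses.
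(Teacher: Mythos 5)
Your proof is correct and follows essentially the same chain of identifications as the paper: \eqref{eq:tensorRees2}, then hypothesis \eqref{lem:tensorReesstrict2}, then hypothesis \eqref{lem:tensorReesstrict3} to drop the $z$-torsion and de-derive $Li_z^*$, then \eqref{eq:tensorRees1}, concluding because the derived tensor product is concentrated in degree zero and hence equals its $\sH^0$, the underived one. The only quibble is that the natural comparison map runs $\gr^F\sN\otimes_{\gr^F\sD_X(\log D)}\gr^F\sD_X\to\gr^F(\sN\otimes_{\sD_X(\log D)}\sD_X)$ rather than the direction you wrote, but this does not affect the argument.
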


\begin{proof}
It is enough to prove the isomorphism between the first and the third term, since this would imply that the third term is isomorphic to its $\sH^0$, that is, the second term. We~write
\begin{equation}\label{eq:tensorReesstrict}
\begin{split}
\gr^F\sN\otimes^L_{\gr^F\sD_X(\log D)}&\gr^F\sD_X\\
&\simeq Li_z^*(\wsN\otimes^L_{\wsD_X(\log D)}\wsD_X)\quad\text{(by \eqref{eq:tensorRees2})}\\
&\simeq Li_z^*(\wsN\otimes_{\wsD_X(\log D)}\wsD_X)\quad\text{(by Assumption \eqref{lem:tensorReesstrict2})}\\
&\simeq Li_z^*\Bigl[(\wsN\otimes_{\wsD_X(\log D)}\wsD_X)/z\textup{-torsion}\Bigr]\quad\text{(by Assumption \eqref{lem:tensorReesstrict3})}\\
&\simeq \gr^F(\sN\otimes_{\sD_X(\log D)}\sD_X)\quad\text{(by \eqref{eq:tensorRees1})}.
\end{split}
\end{equation}
\end{proof}

We now relax Condition \ref{lem:tensorReesstrict}\eqref{lem:tensorReesstrict3}, but we assume that $\wsN$ is $\wsD_X(\log D)$-coherent. Let us denote by $\wsT$ the $z$-torsion of $\wsN\otimes_{\wsD_X(\log D)}\wsD_X$. It is a coherent graded $\wsD_X$-module, the sections of which are annihilated by some power of $z$. It has thus a finite filtration such that each graded piece $\gr_\ell\wsT$ is a coherent graded $\wsD_X$-module annihilated by $z$, hence a coherent graded $\gr^F\sD_X$-module. By the first lines of \eqref{eq:tensorReesstrict}, we find
\begin{align*}
\sH^0(\gr^F\sN\otimes^L_{\gr^F\sD_X(\log D)}\gr^F\sD_X)&=\gr^F\sN\otimes_{\gr^F\sD_X(\log D)}\gr^F\sD_X=i_z^*(\wsN\otimes_{\wsD_X(\log D)}\wsD_X)\\
\sH^{-1}(\gr^F\sN\otimes^L_{\gr^F\sD_X(\log D)}\gr^F\sD_X)&=\ker z,
\end{align*}
and $\sH^i(\gr^F\sN\otimes^L_{\gr^F\sD_X(\log D)}\gr^F\sD_X)=0$ for $i\neq0,-1$. We also have an exact sequence
\[
\wsT/z\wsT\to\gr^F\sN\otimes_{\gr^F\sD_X(\log D)}\gr^F\sD_X\to\gr^F(\sN\otimes_{\sD_X(\log D)}\sD_X)\to0,
\]
but one does not expect in general an isomorphism between $\gr^F\sN\otimes_{\gr^F\sD_X(\log D)}\gr^F\sD_X$ and $\gr^F(\sN\otimes_{\sD_X(\log D)}\sD_X)$.

\begin{lemma}\label{lem:tensorReesnonstrict}
Let $\wsN$ be a coherent graded $\wsD_X(\log D)$-module. Assume that Properties~\ref{lem:tensorReesstrict}\eqref{lem:tensorReesstrict1} and \ref{lem:tensorReesstrict}\eqref{lem:tensorReesstrict2} hold. Then we have the equality in $K_0(\sO_{T^*X})$:
\[
[\gr^F\sN\otimes^L_{\gr^F\sD_X(\log D)}\gr^F\sD_X]=[\gr^F(\sN\otimes_{\sD_X(\log D)}\sD_X)].
\]
\end{lemma}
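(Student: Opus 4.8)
The plan is to read off \emph{both} sides of the asserted identity as $K_0$-classes of the single complex $Li_z^*\wP$, where I abbreviate $\wP:=\wsN\otimes_{\wsD_X(\log D)}\wsD_X$, and then to compare them by means of the $z$-torsion short exact sequence. For the left-hand side: Assumption~\ref{lem:tensorReesstrict}\eqref{lem:tensorReesstrict2} gives $\wsN\otimes^L_{\wsD_X(\log D)}\wsD_X\simeq\wP$, so \eqref{eq:tensorRees2} yields $\gr^F\sN\otimes^L_{\gr^F\sD_X(\log D)}\gr^F\sD_X\simeq Li_z^*\wP$; as recalled just before the statement, $Li_z^*\wP$ has $\sO_{T^*X}$-coherent cohomology concentrated in degrees $0$ and $-1$, so its class $[Li_z^*\wP]$ in $K_0(\sO_{T^*X})$ is defined and equals the left-hand side of the lemma. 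For the right-hand side: Lemma~\ref{lem:tensorRees} identifies $\wP/\wsT$ with the Rees module of $F_\bullet(\sN\otimes_{\sD_X(\log D)}\sD_X)$; being $k[z]$-torsion-free it is strict, hence $Li_z^*(\wP/\wsT)=i_z^*(\wP/\wsT)$, which by \eqref{eq:tensorRees1} is $\gr^F(\sN\otimes_{\sD_X(\log D)}\sD_X)$ and is concentrated in degree $0$; so $[Li_z^*(\wP/\wsT)]$ equals the right-hand side of the lemma. It therefore suffices to show $[Li_z^*\wP]=[Li_z^*(\wP/\wsT)]$ in $K_0(\sO_{T^*X})$.

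To this end I would apply the triangulated functor $Li_z^*$ to the short exact sequence $0\to\wsT\to\wP\to\wP/\wsT\to0$ of graded $\wsD_X$-modules, obtaining a distinguished triangle $Li_z^*\wsT\to Li_z^*\wP\to Li_z^*(\wP/\wsT)\xrightarrow{+1}$; its cohomology in degrees $0$ and $-1$ recovers the exact sequence displayed just before the statement. All three complexes have $\sO_{T^*X}$-coherent cohomology (for $\wsT$ one uses the finite filtration with graded pieces $\gr_\ell\wsT$ annihilated by $z$ recalled above), so taking Euler characteristics in $K_0(\sO_{T^*X})$ gives $[Li_z^*\wP]=[Li_z^*\wsT]+[Li_z^*(\wP/\wsT)]$. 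Thus the lemma reduces to the single vanishing $[Li_z^*\wsT]=0$.

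Finally, $[Li_z^*\wsT]=0$ follows by additivity along the filtration of $\wsT$: for a graded $\wsD_X$-module $M$ annihilated by $z$ one has $Li_z^*M\simeq[M\xrightarrow{\,0\,}M]$, whence $[Li_z^*M]=[M]-[M]=0$ in $K_0(\sO_{T^*X})$, and therefore $[Li_z^*\wsT]=\sum_\ell[Li_z^*\gr_\ell\wsT]=0$. I do not expect a genuine obstacle: the proof is pure $K_0$-bookkeeping, resting on the principle that $z$-torsion is invisible in $K_0$ after $Li_z^*$. The only point requiring care is to ensure that every complex in sight lies in the bounded derived category of $\sO_{T^*X}$-coherent sheaves, so that the classes and the additivity in distinguished triangles are legitimate; this is guaranteed by the standing coherence of $\wsN$ over $\wsD_X(\log D)$ together with the coherence of $\wsD_X$ over $\wsD_X(\log D)$.
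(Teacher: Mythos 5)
Your proof is correct and follows essentially the same route as the paper: both arguments isolate the $z$-torsion $\wsT$ of $\wsN\otimes_{\wsD_X(\log D)}\wsD_X$, use \eqref{eq:tensorRees1}--\eqref{eq:tensorRees2} to identify the two sides as $[Li_z^*\wP]$ and $[Li_z^*(\wP/\wsT)]$, and kill the discrepancy via $[Li_z^*\gr_\ell\wsT]=[\gr_\ell\wsT]-[\gr_\ell\wsT]=0$. You merely make explicit the distinguished-triangle bookkeeping that the paper leaves implicit.
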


\begin{proof}
If we do not assume \ref{lem:tensorReesstrict}\eqref{lem:tensorReesstrict3}, we can nevertheless write in $K_0(\sO_{T^*X})$, with the notation as above:
\[
[\gr^F\sN\otimes^L_{\gr^F\sD_X(\log D)}\gr^F\sD_X]=[\gr^F(\sN\otimes_{\sD_X(\log D)}\sD_X)]+\sum_\ell[Li_z^*\gr_\ell\wsT].
\]
On the other hand, we have
\[
[Li_z^*\gr_\ell\wsT]=[\gr_\ell\wsT]-[\gr_\ell\wsT]=0
\]
in $K_0(\sO_{T^*X})$.
\end{proof}

\subsection{Flatness properties}
\begin{assumption}\label{assumption}
We denote by $\hat X$ a formal neighbourhood of a closed point $x\in D$ and we assume that there exists a regular system of parameters $(x_1,\dots,x_n)$ in $\sO_{\hat X,x}$ such that $\hat D=\{x_1\cdots x_\ell=0\}$.

\emph{For the sake of simplicity, we still denote $\hat X$ by $X$ and $\hat D$ by $D$.}
\end{assumption}

It is straightforward to check that the results of Section \ref{subsec:Rees} apply in this setting. The following proposition and its proof are inspired by~\cite{Wei17}.

\begin{proposition}\label{prop:generalgr}
Let $\wsN$ be a right graded $\wsD_X(\log D)$-module. Assume that any subsequence of the sequence $(x_1,\dots,x_\ell)$ is a regular sequence for~$\wsN$. Then
\[
\wsN\otimes^L_{\wsD_X(\log D)}\wsD_X\simeq\wsN\otimes_{\wsD_X(\log D)}\wsD_X.
\]
If moreover every quotient $\wsN/\sum_{i\in I}\wsN x_i$ ($I\subset\{1,\dots,\ell\}$) is strict, then $\wsN\otimes_{\wsD_X(\log D)}\wsD_X$ is strict.
\end{proposition}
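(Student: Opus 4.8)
The strategy is to reduce everything to a Koszul-complex computation. The key structural fact is that, locally, $\wsD_X$ is a free module over $\wsD_X(\log D)$ \emph{on the right} but not \emph{on the left}, or rather that the failure of flatness is measured by a single Koszul-type relation: writing $\partial_i=x_i^{-1}\Euler_i$ for $i\le\ell$ (with $\Euler_i=x_i\partial_{x_i}$ the logarithmic Euler field) and $\partial_i$ the ordinary vector fields for $i>\ell$, one has $\sD_X=\sD_X(\log D)[\partial_1,\dots,\partial_\ell]$ with the only new relations being $x_i\partial_i=\partial_ix_i-1=\Euler_i$, i.e.\ $x_i\cdot\partial_i\in\sD_X(\log D)$. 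Passing to Rees modules, $\wsD_X=\wsD_X(\log D)[\wpartial_1,\dots,\wpartial_\ell]$ with $x_i\wpartial_i = \wEuler_i$ (up to the $z$-grading bookkeeping built into $\wTheta_X$). I~would first make this presentation precise, and in particular exhibit an explicit finite free resolution of $\wsD_X$ as a \emph{left} graded $\wsD_X(\log D)$-module: it is the Koszul-type complex built from the commuting-up-to-lower-order family $(x_1\wpartial_1,\dots,x_\ell\wpartial_\ell)$ acting on the polynomial ring $\wsD_X(\log D)[\wpartial_1,\dots,\wpartial_\ell]$, whose homology in positive degree vanishes because the $x_i$ form part of a regular system of parameters on $\sO_X$ (so the relevant symbols are a regular sequence). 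Concretely, this is the standard resolution computing $\mathrm{Tor}^{\wsD_X(\log D)}_\bullet(\wsN,\wsD_X)$ as the homology of a Koszul complex $K_\bullet(\underline{x};\wsN\otimes_{\wsO_X}\wsD_X(\log D)[\wpartial])$-type object.

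Once that resolution is in hand, tensoring over $\wsD_X(\log D)$ with $\wsN$ on the left turns the computation of $\wsN\otimes^L_{\wsD_X(\log D)}\wsD_X$ into the homology of a Koszul complex $K_\bullet\bigl((x_1,\dots,x_\ell);\wsN\otimes_{\wsO_X}\wsD_X(\log D)\langle\wpartial\rangle\bigr)$, or more economically, after dévissage in the $\wpartial$-degree, into the homology of the Koszul complexes $K_\bullet\bigl((x_i)_{i\in I};\wsN\bigr)$ as $I$ ranges over subsets of $\{1,\dots,\ell\}$ — this is where the hypothesis enters. By assumption each such subsequence $(x_i)_{i\in I}$ is $\wsN$-regular, so every one of these Koszul complexes is acyclic in positive degree; assembling the pieces gives $\mathrm{Tor}^{\wsD_X(\log D)}_{>0}(\wsN,\wsD_X)=0$, which is exactly the first assertion. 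The second assertion then follows by the same bookkeeping: strictness of $\wsN\otimes_{\wsD_X(\log D)}\wsD_X$ amounts to injectivity of $z$ on each graded piece, and $z$-torsion can only be created by the Koszul differentials $x_i\cdot$; since $z$ acts injectively on each $\wsN/\sum_{i\in I}\wsN x_i$ by hypothesis, and these are precisely the $\wpartial$-graded pieces of $\wsN\otimes_{\wsD_X(\log D)}\wsD_X$ (using that $\mathrm{Tor}_{>0}$ vanishes, so the associated graded for the $\wpartial$-filtration is literally $\bigoplus_{\alpha\in\N^\ell}\wsN/\sum_{\alpha_i>0}\wsN x_i\cdot\wpartial^\alpha$ up to shifts), strictness propagates to the tensor product.

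The main obstacle I expect is the first step — getting the presentation of $\wsD_X$ over $\wsD_X(\log D)$ clean enough that the relevant complex is genuinely a Koszul complex on a regular sequence rather than merely a complex of free modules with no visible acyclicity. The subtlety is that the operators $x_i\wpartial_i=\wEuler_i$ do not commute with the $\wsD_X(\log D)$-structure (they act by conjugation on vector fields), so the "Koszul complex" is only a filtered/graded deformation of an honest Koszul complex; one must check that passing to the symbol (i.e.\ to $\gr$ with respect to the order filtration, where $x_i\wpartial_i$ becomes the symbol $x_i\xi_i$ and the $x_i$ become a regular sequence in $\sO_{T^*X(\log D)}$) loses no homology. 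This is precisely the kind of argument in \cite{Wei17} that the proposition is said to be inspired by, and it is handled by a spectral-sequence/induction-on-filtration argument: acyclicity of the associated graded complex (a genuine Koszul complex on the regular sequence $(x_i)_{i\in I}$ in a polynomial ring over $\gr^F\sN$) forces acyclicity of the filtered complex, provided the filtration is exhaustive and bounded below, which it is. The regularity hypothesis on all subsequences (not just the full sequence) is exactly what is needed to run this over every $\wpartial$-monomial simultaneously.
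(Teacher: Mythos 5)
Your endgame is exactly the paper's: filter the complex computing $\wsN\otimes^L_{\wsD_X(\log D)}\wsD_X$ by the order of differential operators, identify the associated graded with the Koszul-type complex on $\wsN\otimes_kk[\wxi_1,\dots,\wxi_n]$ whose arrows are $x_i\otimes\wxi_i$ for $i\leq\ell$ and $\wxi_j$ for $j>\ell$, contract the free directions, run an induction on $\ell$ using regularity of \emph{all} subsequences to get cohomology only in degree zero (a direct sum of terms $\wsN/\sum_{i\in I}\wsN x_i$), lift acyclicity through the exhaustive, bounded-below filtration starting from $F_{-1}=0$, and deduce strictness of $H^0$ from strictness of these quotients. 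All of that is in the paper in essentially the form you describe. The genuine gap is your first step, which you yourself flag as the main obstacle and never resolve: you propose to produce the complex by giving a finite free \emph{resolution of $\wsD_X$ as a left $\wsD_X(\log D)$-module}, Koszul-built from the family $(x_i\wpartial_{x_i})$. This is not a routine construction: $\wsD_X$ is not finitely generated over $\wsD_X(\log D)$ (one needs all monomials $\wpartial_{x_1}^{a_1}\cdots\wpartial_{x_\ell}^{a_\ell}$), the elements $x_i\wpartial_{x_i}-\widetilde{\Euler}_i$ neither commute with one another nor centralize $\wsD_X(\log D)$, so "the Koszul complex on this family" is not even a well-defined complex of $\wsD_X(\log D)$-modules without substantial extra work; and the spectral-sequence repair you invoke presupposes that you already possess the complex whose associated graded you want to take. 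As written, the object on which the whole computation is supposed to run does not exist yet.

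The paper's way around this is to resolve $\wsN$ rather than $\wsD_X$. One writes $\wsN\simeq\wsN\otimes_{\wsO_X}\Spencer\wsD_X(\log D)$, using the logarithmic Spencer complex (a resolution of $\wsO_X$ by locally free left $\wsD_X(\log D)$-modules), and then applies the standard involution exchanging the \emph{tensor} and \emph{trivial} right-module structures on $\wsN\otimes_{\wsO_X}\wsD_X(\log D)\otimes_{\wsO_X}\wedge^k\wTheta_X(\log D)$. This exhibits $\wsN$ as quasi-isomorphic to a complex of induced modules $(\wsN\otimes_{\wsO_X}\wedge^k\wTheta_X(\log D))\otimes_{\wsO_X}\wsD_X(\log D)$, and since $\wsD_X(\log D)$ and $\wsD_X$ are $\wsO_X$-locally free, the functor $\otimes^L_{\wsD_X(\log D)}\wsD_X$ is computed termwise on such a complex (it simply replaces $\wsD_X(\log D)$ by $\wsD_X$). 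The output is the complex $\bigl((\wsN\otimes_{\wsO_X}\wedge^{-\bullet}\wTheta_X(\log D))\otimes_{\wsO_X}\wsD_X,\delta_\triv\bigr)$, with the explicit differential $\delta_\triv$, and its order-graded is precisely the Koszul complex your step 2 needs. If you replace your first step by this Spencer-plus-involution argument, the rest of your plan goes through as the paper's proof.
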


\begin{proof}
Recall that $\Spencer \wsD_X(\log D)$ is the complex having $\wsD_X(\log D)\otimes_{\wsO_X}\wedge^k\wTheta_X(\log D)$ as its term in degree $-k$, and differential the left $\wsD_X(\log D)$-linear morphism
\[
\wsD_X(\log D)\otimes_{\wsO_X}\wedge^k\wTheta_X(\log D)\To{\delta}\wsD_X(\log D)\otimes_{\wsO_X}\wedge^{k-1}\wTheta_X(\log D)
\]
given, for $\btheta=\theta_1\wedge\cdots\wedge\theta_k$
\[
\delta(P\otimes\btheta)=\sum_{i=1}^k(-1)^{i-1}(P\cdot\theta_i)\otimes\widehat{\btheta_i}+\sum_{i<j}(-1)^{i+j}P\otimes([\theta_i,\theta_j]\wedge\widehat{\btheta_{i,j}}),
\]
with $\widehat{\btheta_i}=\theta_1\wedge\cdots \wedge\theta_{i-1}\wedge\theta_{i+1}\wedge\cdots \wedge\theta_k$, and a similar meaning for $\widehat{\btheta_{i,j}}$. Since $\Sp(\wsD_X(\log D))$ is a resolution of $\wsO_X$ by locally free left $\wsD_X(\log D)$-modules which are thus $\wsO_X$-locally free, we have
\[
\wsN\simeq\wsN\otimes_{\wsO_X}\Spencer \wsD_X(\log D),
\]
with their right $\wsD_X(\log D)$-module structure, by using the tensor right structure on the right-hand side. The complex $\wsN\otimes_{\wsO_X}\Spencer \wsD_X(\log D)$ has
\[
\wsN\otimes_{\wsO_X}(\wsD_X(\log D)\otimes_{\wsO_X}\wedge^k\wTheta_X(\log D))
\]
as its term in degree $-k$, and differential $\id\otimes\delta$, which is right $\wsD_X(\log D)$-linear for the tensor right structure.

We recall that there are two natural structures of right $\wsD_X(\log D)$-module on the tensor product
\[
\wsN\otimes_{\wsO_X}\wsD_X(\log D)\otimes_{\wsO_X}\wedge^k\wTheta_X(\log D).
\]
The \emph{tensor structure} is obtained by using the right structure on $\wsN$ and the left structure on $\wsD_X(\log D)$. On the other hand, the \emph{trivial structure}, for which we rather denote the tensor product as
\[
(\wsN\otimes_{\wsO_X}\wedge^k\wTheta_X(\log D))\otimes_{\wsO_X}\wsD_X(\log D),
\]
is obtained by using the right structure on $\wsD_X(\log D)$ and by completely forgetting the right action of derivations on $\wsN$ while only remember its $\wsO_X$-structure. However, there exists a unique involution of right $\wsD_X$-modules
\[
\wsN\otimes_{\wsO_X}(\wsD_X(\log D)\otimes_{\wsO_X}\wedge^k\wTheta_X(\log D))\To{\sim}(\wsN\otimes_{\wsO_X}\wedge^k\wTheta_X(\log D))\otimes_{\wsO_X}\wsD_X(\log D)
\]
extending the natural involution of $\wsO_X$-modules
\[
\wsN\otimes_{\wsO_X}(\wsO_X\otimes_{\wsO_X}\wedge^k\wTheta_X(\log D))\To{\sim}(\wsN\otimes_{\wsO_X}\wedge^k\wTheta_X(\log D))\otimes_{\wsO_X}\wsO_X.
\]

Let us make explicit the differential $\delta$. For $P\in \wsD_X(\log D)$, the element $[n\otimes(1\otimes\btheta)]\cdot P$ (tensor structure) is complicated to express, but we must have, by right $\wsD_X(\log D)$-linearity of $\id\otimes\delta$,
\begin{align*}
(\id\otimes\delta)\bigl[(n\otimes(1\otimes&\btheta))\cdot P\bigr]=\bigl[(\id\otimes\delta)(n\otimes(1\otimes\btheta))\bigr]\cdot P\\
&=\biggl[n\otimes\Bigl[\sum_{i=1}^k(-1)^{i-1}\theta_i\otimes\widehat{\btheta_i}+\sum_{i<j}(-1)^{i+j}1\otimes([\theta_i,\theta_j]\wedge\widehat{\btheta_{i,j}})\Bigr]\biggr]\cdot P.
\end{align*}
We now write
\[
n\otimes(\theta_i\otimes\widehat{\btheta_i})=n\theta_i\otimes(1\otimes\widehat{\btheta_i})-[n\otimes(1\otimes\widehat{\btheta_i})]\cdot\theta_i,
\]
so the previous formula reads, after the involution transforming the tensor structure to the trivial one, by denoting $\delta_\triv$ the corresponding differential:
\begin{multline}\label{eq:deltatriv}
\delta_\triv\bigl[(n\otimes\btheta)\otimes P\bigr]=\sum_{i=1}^k(-1)^{i-1}(n\theta_i\otimes\widehat{\btheta_i})\otimes P\\[-8pt]
-\sum_{i=1}^k(-1)^{i-1}(n\otimes\widehat{\btheta_i})\otimes (\theta_iP)+\sum_{i<j}(-1)^{i+j}(n\otimes([\theta_i,\theta_j]\wedge\widehat{\btheta_{i,j}}))\otimes P\\[-8pt]
=\bigl[\delta_\wsN(n\otimes\btheta)\bigr]\otimes P-\sum_{i=1}^k(-1)^{i-1}(n\otimes\widehat{\btheta_i})\otimes (\theta_iP),
\end{multline}
where $\delta_\wsN$ is the differential of the Spencer complex $\Spencer_{\log}\wsN$ of $\wsN$ as a right $\wsD_X(\log D)$-module.

We obtain, due to the local $\wsO_X$-freeness of $\wsD_X(\log D)$ and $\wsD_X$,
\begin{align}\label{eq:resol}
\wsN\otimes^L_{\wsD_X(\log D)}\wsD_X&\simeq(\wsN\otimes_{\wsO_X}\Spencer \wsD_X(\log D))\otimes^L_{\wsD_X(\log D)}\wsD_X\notag\\
&\simeq\bigl((\wsN\otimes_{\wsO_X}\wedge^{-\bullet}\wTheta_X(\log D))\otimes_{\wsO_X}\wsD_X(\log D),\;\delta_\triv\bigr)\otimes_{\wsD_X(\log D)}^L\wsD_X\notag\\
&\simeq\bigl((\wsN\otimes_{\wsO_X}\wedge^{-\bullet}\wTheta_X(\log D))\otimes^L_{\wsO_X}\wsD_X(\log D),\;\delta_\triv\bigr)\otimes_{\wsD_X(\log D)}^L\wsD_X\\
&\simeq\bigl((\wsN\otimes_{\wsO_X}\wedge^{-\bullet}\wTheta_X(\log D))\otimes^L_{\wsO_X}\wsD_X,\;\delta_\triv\bigr)\notag\\
&\simeq\bigl((\wsN\otimes_{\wsO_X}\wedge^{-\bullet}\wTheta_X(\log D))\otimes_{\wsO_X}\wsD_X,\;\delta_\triv\bigr).\notag
\end{align}
In the last line, $\delta_\triv$ is given by \eqref{eq:deltatriv}, where $P$ is now a local section of $\wsD_X$. We have thus realized $\wsN\otimes^L_{\wsD_X(\log D)}\wsD_X$ as a complex $(\sF^\bullet\otimes_{\wsO_X}\wsD_X,\delta_\triv)$, where each term $\sF^k$ is an $\wsO_X$-module.

With respect to the filtration $\sF^\bullet\otimes_{\wsO_X}F_k\wsD_X$, $\delta_\triv$ has degree one, and the differential $\gr_1^F\delta_\triv$ of the graded complex $\sF^\bullet\otimes_{\wsO_X}\gr^F\wsD_X$ is expressed as
\[
\gr_1^F\delta_\triv\bigl[(n\otimes\btheta)\otimes Q\bigr]=\sum_{i=1}^k(-1)^i(n\otimes\widehat{\btheta_i})\otimes (\theta_i\cdot Q)
\]
for a local section $Q$ of $\gr^F\wsD_X$. The filtration $F_p(\sF^\bullet\otimes_{\wsO_X}\wsD_X,\delta_\triv)$ whose term in degree $-k$ is $\sF^{-k}\otimes_{\wsO_X}F_{p-k}\wsD_X$ satisfies $F_p(\sF^\bullet\otimes_{\wsO_X}\wsD_X,\delta_\triv)=0$ for $p<0$ and we have
\begin{equation}\label{eq:grFD}
\gr^F(\sF^\bullet\otimes_{\wsO_X}\wsD_X,\delta_\triv)=(\sF^\bullet\otimes_{\wsO_X}\gr^F\wsD_X,\gr_1^F\delta_\triv),
\end{equation}
compatible with the grading, if the grading on the right-hand side takes into account the cohomology degree.

\begin{lemma}
If the first condition of Proposition \ref{prop:generalgr} is fulfilled, the graded complex $(\sF^\bullet\otimes_{\wsO_X}\gr^F\wsD_X,\gr_1^F\delta_\triv)$ has zero cohomology in any degree $i\neq0$. If the second condition is also fulfilled, then the cohomology in degree zero is strict.
\end{lemma}

\begin{proof}
Let us consider the basis
\[
x_1\wpartial_{x_1},x_2\wpartial_{x_2},\dots,x_\ell\wpartial_{x_\ell},\wpartial_{x_{\ell+1}},\dots,\wpartial_{x_n}
\]
of $\wTheta_X(\log D)$, so that, by replacing $x_i\wpartial_{x_i}$ with $\wpartial_{x_i}$ we obtain a basis of $\wTheta_X$. Let $\wxi_1,\wxi_2,\dots,\wxi_n$ resp.\ $x_1\wxi_1,x_2\wxi_2,\dots,\wxi_n$ be the corresponding basis of $\gr_1^F\wsD_X$ resp.\ $\gr_1^F\wsD_X(\log D)$. Then $\gr^F(\sF^\bullet\otimes_{\wsO_X}\wsD_X,\delta_\triv)$ is identified with a Koszul complex. More precisely, it isomorphic to the simple complex associated to the $n$-cube with vertices
\[
\wsN\otimes_{\wsO_X}\gr^F\wsD_X=\wsN\otimes_{k}k[\wxi_1,\dots,\wxi_n]
\]
and arrows in the $i$-th direction all equal to multiplication by~$\wxi_i$ if $i>\ell$ and by $x_i\otimes\wxi_i$ if $i\leq\ell$. In such a way we obtain that $(\sF^\bullet\otimes_{\wsO_X}\gr^F\wsD_X,\gr_1^F\delta_\triv)$ is quasi-isomorphic to the simple complex attached to the $\ell$-cube having
\[
\wsN\otimes_{k}k[\wxi_1,\dots,\wxi_\ell]\To{x_i\otimes\wxi_i}\wsN\otimes_{k}k[\wxi_1,\dots,\wxi_\ell]
\]
as its arrow in the $i$-th directions.

Let us prove by induction on $\ell$ that, under the first assumption of Proposition \ref{prop:generalgr}, this complex has cohomology in degree zero only, and this cohomology is isomorphic to a direct sum of terms, each of which isomorphic to $\wsN/\sum_{i\in I}\wsN x_i$ for some $I\subset \{1,\dots,\ell\}$. This will give the first part of the lemma.

Indeed, since $x_1$ is injective on $\wsN$, each arrow $x_1\otimes\wxi_1$ is injective with cokernel isomorphic to $\wsN_1\otimes_kk[\wxi_2,\dots,\wxi_\ell]$, where $\wsN_1=\wsN\oplus\bigoplus_{k\geq1}(\wsN/\wsN x_1)[\wxi_1^k]$ and $[\wxi_1^k]$ is the image of $\wxi_1^k$ in the cokernel. It follows that the complex we consider is quasi-isomorphic to the simple complex attached to the $(\ell-1)$-cube with vertices $\wsN_1\otimes_k\nobreak k[\wxi_2,\dots,\wxi_\ell]$ and arrows $x_i\otimes\wxi_i$ ($i=2,\dots,\ell)$. Now, any subsequence of $(x_2,\dots,x_\ell)$ is a regular sequence for $\wsN_1$ by our assumption, and we obtain the desired assertion by induction on $\ell$.

For the second part of the lemma, we note that, by the second assumption of Proposition \ref{prop:generalgr}, each term $\wsN/\sum_{i\in I}\wsN x_i$ is strict, so the cohomology in degree zero is strict.
\end{proof}

\subsubsection*{End of the proof of Proposition \ref{prop:generalgr}}
By \eqref{eq:grFD}, the lemma applies to the graded complex $\gr^F(\sF^\bullet\otimes_{\wsO_X}\wsD_X,\delta_\triv)$ and therefore each $\gr^F_p(\sF^\bullet\otimes_{\wsO_X}\wsD_X,\delta_\triv)$ has cohomology in degree zero at most and, if the supplementary strictness assumption on the quotients of $\wsN$ is fulfilled, the cohomology in degree zero is strict. It follows that each $F_p(\sF^\bullet\otimes_{\wsO_X}\wsD_X,\delta_\triv)$ satisfies the same property since $F_{-1}(\sF^\bullet\otimes_{\wsO_X}\wsD_X,\delta_\triv)=0$. Passing to the inductive limit, we conclude that so does the complex \hbox{$(\sF^\bullet\otimes_{\wsO_X}\wsD_X,\delta_\triv)$}.
\end{proof}

\subsection{A criterion for having \texorpdfstring{$\sN\otimes_{\sD_X(\log D)}\sD_X\simeq\sN(*D)$}{crit}}\mbox{}

\emph{Assumption \ref{assumption} and the corresponding simplifying notation remain in order in this section.}

For $I\subset \{1,\dots,\ell\}$ we set $I^\rc=\{1,\dots,\ell\}\setminus I$, $D_I=\bigcap_{i\in I}D_i$ and, for each $j\in I^\rc$, $D_{I,j}=D_I\cap D_j$ and $D_I^{(I^\rc)}=\bigcup_{j\in\{1,\dots,\ell\}\setminus I}D_{I,j}$, so that $D_I^{(I^\rc)}$ is a divisor with normal crossings in $D_I$. If $I=\emptyset$, we have $D_I=X$ and $D_I^{(I^\rc)}=D$.

We identify $i_{D_I}^*\sO_X$ with $\sO_{D_I}$ and $i_{D_I}^*\sD_X(\log D)$ with $\sD_{D_I}(\log D_I^{(I^\rc)})[(\Euler_i)_{i\in I}]$, where $\Euler_i$ is the class of $x_i\partial_{x_i}$. It is a central element in this ring.

The following result is inspired by \cite[Lem.\,3.1.2]{Moc15}.

\begin{proposition}\label{prop:generalV}
Let $\sN$ be a right $\sD_X(\log D)$-module. Together with the assumption above, assume the following property: for any (possibly empty) subset $I$ of $\{1,\dots,\ell\}$,
\begin{enumerate}
\item\label{prop:generalV1}
$i_{D_I}^*\sN\subset(i_{D_I}^*\sN)(*D_I^{(I^\rc)})$,
\item\label{prop:generalV2}
for all $j\in I$ and all $k\geq1$,
\[
(\Euler_j+k):i_{D_I}^*\sN\to i_{D_I}^*\sN\quad\text{is bijective}.
\]
\end{enumerate}
Then the natural morphism $\sN\otimes_{\sD_X(\log D)}\sD_X\to\sN(*D)$ is an isomorphism.
\end{proposition}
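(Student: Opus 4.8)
The plan is to work throughout in the formal local situation of Assumption~\ref{assumption} and to compare, via the natural morphism, two filtrations on source and target. Write $\mathcal M:=\sN\otimes_{\sD_X(\log D)}\sD_X$ and $\Phi\colon\mathcal M\to\sN(*D)$ for the morphism in question; note first that $\sN(*D)=\sN\otimes_{\sO_X}\sO_X(*D)$ is naturally a right $\sD_X$-module, because $\sD_X(\log D)(*D)=\sD_X(*D)$ (one has $\partial_{x_i}=x_i^{-1}\cdot x_i\partial_{x_i}$ after localizing), and $\Phi$ is the right $\sD_X$-linear extension of $n\mapsto n\otimes1$. The engine of the argument will be the operator identities $\partial_{x_i}=(\Euler_i+1)x_i^{-1}$ and $x_i^{-1}(\Euler_i+c)=(\Euler_i+c+1)x_i^{-1}$ in $\sD_X(*D)$, where $\Euler_i=x_i\partial_{x_i}$; iterating them I would obtain, for $m\in\sN$ and $\beta\in\N^\ell$,
\[
m\cdot\partial_x^\beta=(m\cdot R_\beta)\cdot x^{-\beta},\qquad R_\beta:=\prod_{i=1}^\ell(\Euler_i+1)(\Euler_i+2)\cdots(\Euler_i+\beta_i)\in\sD_X(\log D),
\]
with $x^{-\beta}=x_1^{-\beta_1}\cdots x_\ell^{-\beta_\ell}$ and empty $i$-th product when $\beta_i=0$. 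Since $\sD_X=\sum_{\beta\in\N^\ell}\sD_X(\log D)\,\partial_x^\beta$, every element of $\mathcal M$ is a finite sum $\sum_\beta n_\beta\otimes\partial_x^\beta$, and $\Phi$ sends it to $\sum_\beta(n_\beta R_\beta)\,x^{-\beta}$.

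Next I would introduce the pole filtration $P_p\sN(*D)=\sum_{|\beta|\le p}\sN\cdot x^{-\beta}$ (for $p\in\N$, $\beta\in\N^\ell$) and prove the key lemma: writing $I(\beta)=\{i:\beta_i>0\}$, there is a natural identification $\gr^P_p\sN(*D)\simeq\bigoplus_{|\beta|=p}\bigl(i_{D_{I(\beta)}}^*\sN\bigr)\cdot x^{-\beta}$, on which the right $\sD_X$-action on the $\beta$-summand factors through $\sD_{D_{I(\beta)}}(\log D_{I(\beta)}^{(I(\beta)^{\rc})})[(\Euler_i)_{i\in I(\beta)}]$; in particular $R_\beta$ acts there through $\prod_{i\in I(\beta)}\prod_{k=1}^{\beta_i}(\Euler_i+k)$, which is bijective by property~\eqref{prop:generalV2}. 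This is exactly where property~\eqref{prop:generalV1} is used: applied over all subsets $I$ it says that every subsequence of $(x_1,\dots,x_\ell)$ is a regular sequence on $\sN$, which is what forces $\sN x^{-\beta}\cap P_{p-1}\sN(*D)=\sum_{i\in I(\beta)}\sN\,x^{-(\beta-e_i)}$ and hence the direct-sum shape of $\gr^P$; it also gives the inclusion $\sN\subset\sN(*D)$.

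Granting the lemma, surjectivity of $\Phi$ follows by induction on $p$, the case $p=0$ being $P_0\sN(*D)=\sN=\Phi(\sN\otimes1)$: given $m\cdot x^{-\beta}$ with $|\beta|=p\ge1$, bijectivity of $R_\beta$ on $i_{D_{I(\beta)}}^*\sN$ lets me pick $m'\in\sN$ with $m-m'R_\beta\in\sum_{i\in I(\beta)}\sN x_i$, so that $m x^{-\beta}-\Phi(m'\otimes\partial_x^\beta)\in P_{p-1}\sN(*D)$, which is in the image by induction. For injectivity I would use a top-degree descent: if $\xi=\sum_{|\beta|\le p}n_\beta\otimes\partial_x^\beta$ satisfies $\Phi(\xi)=0$, then reading $\Phi(\xi)$ in $\gr^P_p\sN(*D)$ and using the key lemma together with the bijectivity of $R_\beta$ gives $n_\beta\in\sum_{i\in I(\beta)}\sN x_i$ for each $|\beta|=p$; since $x_i\partial_x^\beta=\Euler_i\,\partial_x^{\beta-e_i}$ with $\Euler_i\in\sD_X(\log D)$, each such $n_\beta\otimes\partial_x^\beta$ rewrites as a sum of terms $n'\otimes\partial_x^{\beta'}$ with $|\beta'|=p-1$, so $\xi$ has a representation in degrees $\le p-1$; iterating, $\xi=n\otimes1$ with $n=\Phi(\xi)=0$ in $\sN(*D)$, whence $n=0$ and $\xi=0$ by property~\eqref{prop:generalV1}.

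The step I expect to be the real work is the key lemma — both the decomposition of $\gr^P_p\sN(*D)$ as a sum over $\beta$ (which rests on the regular-sequence content of property~\eqref{prop:generalV1}) and the check that the graded right $\sD_X$-action on each piece is through $\sD_{D_I}(\log D_I^{(I^{\rc})})[(\Euler_i)_{i\in I}]$ in a way that makes $R_\beta$ act as the central operator $\prod(\Euler_i+k)$ on which property~\eqref{prop:generalV2} bears. A more transparent organisation is an induction on $\ell$: write $D=D_\ell+D'$, first treat a single smooth component — proving $\sN\otimes_{\sD_X(\log D)}\sD_X(\log D')\simeq\sN(*D_\ell)$, where the computation above involves only $t=x_\ell$ and $\Euler_\ell$ — and then observe that $\sN(*D_\ell)$ inherits properties~\eqref{prop:generalV1} and~\eqref{prop:generalV2} relative to $D'$, since $i_{D'_{I'}}^*\bigl(\sN(*D_\ell)\bigr)=(i_{D_{I'}}^*\sN)(*D_\ell)$, localization is exact and commutes with the $\Euler_j$ for $j\ne\ell$; then conclude by induction that $\sN(*D_\ell)\otimes_{\sD_X(\log D')}\sD_X\simeq\sN(*D_\ell)(*D')=\sN(*D)$.
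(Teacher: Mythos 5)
Your argument is correct, but your primary route is genuinely different from the one in the paper. You attack all branches of $D$ simultaneously: you filter $\sN(*D)$ by total pole order $P_p=\sum_{|\beta|\leq p}\sN x^{-\beta}$ and prove a Koszul-type lemma identifying $\gr^P_p$ with $\bigoplus_{|\beta|=p}(i_{D_{I(\beta)}}^*\sN)x^{-\beta}$, on which the operators $R_\beta=\prod_i\prod_{k=1}^{\beta_i}(\Euler_i+k)$ act invertibly; hypothesis \eqref{prop:generalV1} enters through its regular-sequence content (correctly extracted: injectivity of $i_{D_I}^*\sN$ into its localization along $D_I^{(I^\rc)}$ is exactly nonzerodivisibility of the remaining $x_j$'s on the quotient), and \eqref{prop:generalV2} gives the invertibility. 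The paper instead peels off one smooth component at a time: Lemma~\ref{lem:tensorloc} is the one-variable version of your filtration argument (filtering $\sD_X(\log D')$ by $\sum_{j\leq k}\partial_{x_1}^j\sD_X(\log D)$ and comparing graded pieces via the same identity $\partial_{x_1}^kx_1^k=\prod_{j=1}^k(\Euler_1+j)$ on $\sN/\sN x_1$), Lemma~\ref{lem:inductive} shows the hypotheses are inherited by $\sN(*D_1)$ relative to $D'$, and the proposition follows by induction on the number of components. What each approach buys: the paper's induction keeps every step a one-variable computation and avoids the multi-index $\gr^P$ decomposition entirely, at the cost of the inheritance lemma; your global filtration makes the role of all the hypotheses for all subsets $I$ visible at once, but the direct-sum structure of $\gr^P_p$ (your self-identified ``real work'') is a nontrivial Koszul computation that you only sketch. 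Your closing remark --- inducting on $\ell$ by writing $D=D_\ell+D'$, proving $\sN\otimes_{\sD_X(\log D)}\sD_X(\log D')\simeq\sN(*D_\ell)$, and checking that $\sN(*D_\ell)$ inherits \eqref{prop:generalV1} and \eqref{prop:generalV2} --- is precisely the proof the paper gives, so you have in effect found both arguments.
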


In order to argue by induction on $\#D$, we first consider how the properties in Proposition~\ref{prop:generalV} are preserved when
\begin{itemize}
\item
$D=D_1\cup D'$ and $D$ is replaced with $D'$ (in the local setting, we assume that $D_1=\{x_1=0\}$ and $D'=\{x_2\cdots x_\ell=0\}$),
\item
correspondingly, $\sN$ is replaced with $\sN\otimes_{\sD_X(\log D)}\sD_X(\log D')$.
\end{itemize}

In order to simplify the formulas, we use in this section the simplified notation below:
\[
\sD_{X,\log}:=\sD_X(\log D),\quad \sD'_{X,\log}:=\sD_X(\log D'),\quad \sN'=\sN\otimes_{\sD_{X,\log}}\sD'_{X,\log}.
\]
Since $\sD_{X,\log}(*D_1)=\sD'_{X,\log}(*D_1)$, we have $\sN'(*D_1)\simeq\sN(*D_1)$, and we have a natural localization morphism $\sN'\to\sN'(*D_1)=\sN(*D_1)$.

\begin{lemma}\label{lem:tensorloc}
Let $\sN$ be a right $\sD_{X,\log}$-module such that $\sN\subset\sN(*D_1)$ and \ref{prop:generalV}\eqref{prop:generalV2} for $I=\{1\}$ holds. Then the localization morphism
\[
\sN'\to\sN'(*D_1)=\sN(*D_1)
\]
is an isomorphism.
\end{lemma}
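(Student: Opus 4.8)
The goal is to show the localization morphism $\lambda\colon\sN'\to\sN'(*D_1)=\sN(*D_1)$ is an isomorphism. Here $\sN(*D_1)$ is a right $\sD_X(\log D)(*D_1)$-module, hence a right $\sD'_{X,\log}$-module, and $\lambda$ is the $\sD'_{X,\log}$-linear map sending $n\otimes P$ to $n\cdot P$; it extends the $\sD_{X,\log}$-linear inclusion $\sN\hookrightarrow\sN(*D_1)$, which is injective by hypothesis. We are in the formal local situation of Assumption~\ref{assumption}, so $\sN(*D_1)=\sN[x_1^{-1}]=\bigcup_{m\ge0}\sN x_1^{-m}$. Writing $\Euler_1=x_1\partial_{x_1}\in\sD_{X,\log}$, I would first record two elementary facts. (i)~As a left $\sD_{X,\log}$-module $\sD'_{X,\log}=\sum_{q\ge0}\sD_{X,\log}\partial_{x_1}^{q}$, since every generator of $\sD'_{X,\log}$ other than $\partial_{x_1}$ commutes with $\partial_{x_1}$; hence $\sN'=\sum_{q\ge0}\sN\otimes\partial_{x_1}^{q}$, and every element of $\sN'$ is of the form $\sum_{q\le N}n_q\otimes\partial_{x_1}^{q}$ for some $N$, which I will call its $\partial_{x_1}$-length. (ii)~In $\sD_{X,\log}(*D_1)$ one has $\partial_{x_1}=(\Euler_1+1)x_1^{-1}$, hence $\partial_{x_1}^{q}=\prod_{j=1}^{q}(\Euler_1+j)\cdot x_1^{-q}$; thus $n\cdot\partial_{x_1}^{q}=\bigl(n\cdot\prod_{j=1}^{q}(\Euler_1+j)\bigr)x_1^{-q}$ in $\sN(*D_1)$, while in $\sN'$, since $x_1\partial_{x_1}^{q}=\Euler_1\partial_{x_1}^{q-1}$ with $\Euler_1\in\sD_{X,\log}$, we have $(nx_1)\otimes\partial_{x_1}^{q}=(n\,\Euler_1)\otimes\partial_{x_1}^{q-1}$.

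For surjectivity, by (i)--(ii) the image of $\lambda$ is $\sum_{q\ge0}\sN\partial_{x_1}^{q}$, and $\sN\partial_{x_1}^{q}=\bigl(\sN\cdot\prod_{j=1}^{q}(\Euler_1+j)\bigr)x_1^{-q}\subset\sN x_1^{-q}$. The plan is to prove $\sum_{q\le m}\sN\partial_{x_1}^{q}=\sN x_1^{-m}$ by induction on $m$. The inclusion $\subset$ is clear; for $\supset$, given $nx_1^{-(m+1)}$ with $n\in\sN$, use that $\prod_{j=1}^{m+1}(\Euler_1+j)$ is onto $\sN/\sN x_1$ — it is a composite of the maps $\Euler_1+j$ ($1\le j\le m+1$), each bijective on $\sN/\sN x_1$ by hypothesis~\ref{prop:generalV}\eqref{prop:generalV2} for $I=\{1\}$ — to write $n=n'\prod_{j=1}^{m+1}(\Euler_1+j)+n''x_1$ with $n',n''\in\sN$; then $nx_1^{-(m+1)}=n'\partial_{x_1}^{m+1}+n''x_1^{-m}$, and the second summand lies in $\sum_{q\le m}\sN\partial_{x_1}^{q}$ by induction. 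Since $\sN(*D_1)=\bigcup_m\sN x_1^{-m}$, this gives surjectivity of $\lambda$.

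For injectivity, I would filter $\sN'$ by $\sN'_{\le N}=\sum_{q\le N}\sN\otimes\partial_{x_1}^{q}$ and show $\lambda|_{\sN'_{\le N}}$ is injective by induction on $N$. For $N=0$ the composite $\sN\to\sN'_{\le0}\xrightarrow{\lambda}\sN(*D_1)$ is the inclusion $\sN\hookrightarrow\sN(*D_1)$, so $\sN\xrightarrow{\sim}\sN'_{\le0}$ and $\lambda|_{\sN'_{\le0}}$ is injective. For the step, let $\xi=\sum_{q\le N}n_q\otimes\partial_{x_1}^{q}$ with $\lambda(\xi)=0$. Then $0=\lambda(\xi)x_1^{N}=\sum_{q\le N}\bigl(n_q\prod_{j=1}^{q}(\Euler_1+j)\bigr)x_1^{N-q}$, an identity already in $\sN$; reducing modulo $\sN x_1$ kills each term with $q<N$ and leaves $n_N\prod_{j=1}^{N}(\Euler_1+j)\equiv0$, whence $n_N\in\sN x_1$ because $\prod_{j=1}^{N}(\Euler_1+j)$ is injective on $\sN/\sN x_1$. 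Writing $n_N=mx_1$ and using the relation in (ii), $n_N\otimes\partial_{x_1}^{N}=(m\,\Euler_1)\otimes\partial_{x_1}^{N-1}$, so $\xi\in\sN'_{\le N-1}$, and the inductive hypothesis forces $\xi=0$. As $\sN'=\bigcup_N\sN'_{\le N}$, $\lambda$ is injective, hence an isomorphism.

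The genuinely delicate step is injectivity: it does not suffice to know $x_1$ is injective on $\sN$ and try to transport this to $\sN'$ by direct manipulation, because $\bigcap_k\sN x_1^{k}$ can be nonzero for an arbitrary $\sN$. What makes the induction close is the interaction between the $\partial_{x_1}$-order filtration of $\sN'$ and hypothesis~\ref{prop:generalV}\eqref{prop:generalV2}, which makes $\prod_{j=1}^{N}(\Euler_1+j)$ invertible on $\sN/\sN x_1$ and hence forces the leading coefficient of an element of $\ker\lambda$ to be divisible by~$x_1$. Everything else is the two commutation identities of (ii) together with the two routine inductions above.
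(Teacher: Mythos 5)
Your proof is correct and follows essentially the same route as the paper's: both filter by the order in $\partial_{x_1}$, use the identity $\partial_{x_1}^{k}x_1^{k}=\prod_{j=1}^{k}(\Euler_1+j)$ to land in $\sN x_1^{-k}$, and invoke hypothesis \ref{prop:generalV}\eqref{prop:generalV2} to make $\prod_{j=1}^{k}(\Euler_1+j)$ bijective on $\sN/\sN x_1$, which controls the graded pieces. The only difference is presentational: the paper runs a single induction via a commutative diagram of short exact sequences, whereas you split the same mechanism into separate surjectivity and injectivity inductions.
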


\begin{proof}
The statement is local. For $k\geq0$, we set
\[
\sD'_{X,\log,\leq k}=\sum_{j=0}^k\partial_{x_1}^j\sD_{X,\log}=\sum_{j=0}^k\sD_{X,\log}\,\partial_{x_1}^j.
\]
We will prove by induction on $k$ that $\sN\otimes_{\sD_{X,\log}}\sD'_{X,\log,\leq k}$ injects into $\sN(*D_1)$ with image equal to $\sN x_1^{-k}$, the case $k=0$ being given by \ref{prop:generalV}\eqref{prop:generalV1} for $I=\emptyset$. The composition
\[
\sN\otimes_{\sD_{X,\log}}\sD'_{X,\log,\leq k-1}\to\sN\otimes_{\sD_{X,\log}}\sD'_{X,\log,\leq k}\to\sN(*D),
\]
being injective by induction, so is the first morphism, whose cokernel will be denoted by $\gr_k(\sN\otimes_{\sD_{X,\log}}\sD'_{X,\log})$. Since $\sN$ is acted on by $x_1\partial_{x_1}$, hence by $\partial_{x_1}^kx_1^k=\prod_{j=1}^k(x_1\partial_{x_1}+j)$, the second morphism factorizes through $\sN x_1^{-k}\subset\sN(*D_1)$: an element $m\otimes\partial_{x_1}^k$ has image $m\partial_{x_1}^k=(m\partial_{x_1}^kx_1^k)x_1^{-k}$, and $m\partial_{x_1}^kx_1^k$ is the image of $(m\partial_{x_1}^kx_1^k)\otimes1$, hence belongs to $\sN$. We consider the commutative diagram of exact sequences, where the first two terms of the lower line are regarded in $\sN(*D_1)$,
\[
\xymatrix@C=.5cm{
0\ar[r]&\sN\otimes_{\sD_{X,\log}}\sD'_{X,\log,\leq k-1}\ar[r]\ar[d]_\wr&\sN\otimes_{\sD_{X,\log}}\sD'_{X,\log,\leq k}\ar[d]\ar[r]&\gr_k(\sN\otimes_{\sD_{X,\log}}\sD'_{X,\log})\ar[d]\ar[r]&0\\
0\ar[r]&\sN x_1^{-k+1}\ar[r]&\sN x_1^{-k}\ar[r]&\sN x_1^{-k}/\sN x_1^{-k+1}\ar[r]&0
}
\]
and we aim at proving that the right vertical arrow is bijective. For that purpose, we consider the commutative diagram
\begin{equation}\label{eq:tensorloc}
\begin{array}{c}
\let\labelstyle\textstyle
\xymatrix{
\sN/\sN x_1\ar@{=}[d]\ar[r]^-{\partial_{x_1}^k}&\gr_k(\sN\otimes_{\sD_{X,\log}}\sD'_{X,\log})\ar[d]\\
\sN/\sN x_1\ar[r]^-{\partial_{x_1}^k}&\sN x_1^{-k}/\sN x_1^{-k+1}.
}
\end{array}
\end{equation}
By definition, the upper horizontal morphism is onto. We claim that it is also injective. We will check that the lower horizontal morphism is an isomorphism. This will imply the desired injectivity, hence the bijectivity of the upper horizontal arrow, together with that of the right vertical one.

Composing the lower horizontal morphism with right multiplication by $x_1^k$:
\[
\sN x_1^{-k}/\sN x_1^{-k+1}\To{x_1^k}\sN/\sN x_1,
\]
which is bijective, we find the morphism
\[
\partial_{x_1}^kx_1^k:\sN/\sN x_1\to\sN/\sN x_1,
\]
that we can write as $\prod_{j=1}^k(\Euler_1+j)$. Property \ref{prop:generalV}\eqref{prop:generalV2} for $I=\{1\}$ gives its bijectivity, as wanted.
\end{proof}

\begin{lemma}\label{lem:inductive}
Let $\sN$ be a $\sD_{X,\log}$-module satisfying \ref{prop:generalV}\eqref{prop:generalV1} and \eqref{prop:generalV2}. Then so does $\sN'=\sN(*D_1)$ as a $\sD'_{X,\log}$-module.
\end{lemma}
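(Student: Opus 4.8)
The plan is to verify Properties \ref{prop:generalV}\eqref{prop:generalV1} and \eqref{prop:generalV2} for $\sN'=\sN(*D_1)$ with respect to $D'=D_2\cup\cdots\cup D_\ell$, by relating, for each subset $I'\subset\{2,\dots,\ell\}$, the restriction $i_{D'_{I'}}^*\sN'$ to the corresponding restriction of $\sN$ itself. First I would recall from Lemma \ref{lem:tensorloc} that $\sN'$ is indeed the localization $\sN(*D_1)$ (so no $\sD$-module subtlety remains), and fix the bookkeeping: for $I'\subset\{2,\dots,\ell\}$, set $I=I'$ viewed inside $\{1,\dots,\ell\}$ and also $I\cup\{1\}$; the divisors $D'_{I'}=D_{I'}$ as subvarieties of $X$, while the ambient normal crossings boundary changes from $D_{I}^{(I^\rc)}$ (relative to $D$) to $D_{I'}^{\prime(I'^\rc)}$ (relative to $D'$), the difference being exactly whether the component $D_{I',1}=D_{I'}\cap D_1$ is included. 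Since restriction $i_{D_{I'}}^*$ is right exact and commutes with localization away from $D_1$ on $D_{I'}$, one gets
\[
i_{D_{I'}}^*\sN'=i_{D_{I'}}^*\bigl(\sN(*D_1)\bigr)=\bigl(i_{D_{I'}}^*\sN\bigr)(*D_{I',1}),
\]
and this is the key computation that lets everything reduce to hypotheses already assumed for $\sN$.

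Next I would check \eqref{prop:generalV1} for $\sN'$. We must show $i_{D_{I'}}^*\sN'\subset(i_{D_{I'}}^*\sN')(*D_{I'}^{\prime(I'^\rc)})$. But $D_{I'}^{\prime(I'^\rc)}$ consists of the components $D_{I',j}$ for $j\in\{2,\dots,\ell\}\setminus I'$, while $D_{I}^{(I^\rc)}$ (relative to the full $D$, with the same $I=I'\subset\{1,\dots,\ell\}$) has in addition the component $D_{I',1}$. Using the displayed identity $i_{D_{I'}}^*\sN'=(i_{D_{I'}}^*\sN)(*D_{I',1})$ together with \eqref{prop:generalV1} for $\sN$ (applied to the subset $I=I'\subset\{1,\dots,\ell\}$), we get
\[
i_{D_{I'}}^*\sN'=(i_{D_{I'}}^*\sN)(*D_{I',1})\subset(i_{D_{I'}}^*\sN)(*D_{I}^{(I^\rc)})=(i_{D_{I'}}^*\sN')\bigl(*\,(D_{I}^{(I^\rc)}\setminus D_{I',1})\bigr),
\]
and $D_{I}^{(I^\rc)}\setminus D_{I',1}=D_{I'}^{\prime(I'^\rc)}$, which is the assertion; the last equality uses that localizing $(i_{D_{I'}}^*\sN)(*D_{I',1})$ further at $D_{I',1}$ changes nothing.

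For \eqref{prop:generalV2} for $\sN'$: fix $j\in I'$ and $k\geq1$; we need $\Euler_j+k$ bijective on $i_{D_{I'}}^*\sN'=(i_{D_{I'}}^*\sN)(*D_{I',1})$. Since $j\neq1$, the operator $x_j\partial_{x_j}$ commutes with the central element $x_1$ (and with inverting it), so bijectivity of $\Euler_j+k$ on $(i_{D_{I'}}^*\sN)(*D_{I',1})$ follows from bijectivity on $i_{D_{I'}}^*\sN$ by exactness of localization; and the latter is precisely \eqref{prop:generalV2} for $\sN$ applied to the same subset $I=I'\subset\{1,\dots,\ell\}$ (note $j\in I$). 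This completes the verification. The only point requiring a little care — and the step I expect to be the main obstacle — is making the restriction-commutes-with-localization identity $i_{D_{I'}}^*(\sN(*D_1))=(i_{D_{I'}}^*\sN)(*D_{I',1})$ fully rigorous: $i_{D_{I'}}^*$ is a non-exact functor in general, but here $\sN(*D_1)=\varinjlim_N\sN x_1^{-N}$ is a filtered colimit of copies of $\sN$ with transition maps multiplication by $x_1$, and $i_{D_{I'}}^*=\sO_{D_{I'}}\otimes_{\sO_X}(-)$ commutes with colimits; thus $i_{D_{I'}}^*\sN(*D_1)=\varinjlim_N(i_{D_{I'}}^*\sN)\overline{x_1}^{-N}=(i_{D_{I'}}^*\sN)(*D_{I',1})$, the restriction of $x_1$ to $D_{I'}$ being a local equation for $D_{I',1}$ since $1\notin I'$. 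Everything else is formal. Consequently $\sN'$ satisfies \ref{prop:generalV}\eqref{prop:generalV1} and \eqref{prop:generalV2} relative to $D'$, proving the lemma.
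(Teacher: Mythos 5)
Your proposal is correct and follows essentially the same route as the paper: both rest on the identity $i_{D_{I'}}^*(\sN(*D_1))=(i_{D_{I'}}^*\sN)(*D_{1})$ (which the paper deduces from bijectivity of $x_1$ on $\sN(*D_1)$ and you deduce from the filtered-colimit description — the same fact), and then transfer \eqref{prop:generalV1} and \eqref{prop:generalV2} from $\sN$ to $\sN'$ by exactness of localization and the commutation of $\Euler_j$ with $x_1$ for $j\neq1$. No gaps.
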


\begin{proof}
Let $I$ be a subset of $\{2,\dots,\ell\}$ and let $I^{\prime\rc}$ be its complement in $\{2,\dots,n\}$, while $I^\rc=\{1\}\cup I^{\prime\rc}$ is its complement in $\{1,\dots,n\}$. We wish to prove that the localization morphism
\[
i_I^*(\sN(*D_1))\to i_I^*(\sN(*D_1))(*D_I^{(I^{\prime\rc})})
\]
is injective. Since $x_1:\sN(*D_1)\to\sN(*D_1)$ is bijective, so is $x_1:i_I^*(\sN(*D_1))\to i_I^*(\sN(*D_1))$, and thus $i_I^*(\sN(*D_1))=(i_I^*\sN)(*D_1)$. Then, since the localization morphism $i_I^*\sN\to (i_I^*\sN)(*D_I^{(I^{\prime\rc})})$ is injective by \ref{prop:generalV}\eqref{prop:generalV1} for $\sN$, it remains so after applying the functor $(*D_1)$, which gives the desired injectivity, hence \ref{prop:generalV}\eqref{prop:generalV1} for $\sN'=\sN(*D_1)$. Similarly, \ref{prop:generalV}\eqref{prop:generalV2} is obtained by applying $(*D_1)$, since $\Euler_j$ commutes with $x_1$ for $j\neq1$.\end{proof}

\begin{proof}[End of the proof of Proposition \ref{prop:generalV}]
We argue by induction on $\#D$. We write
\[
\sN\otimes_{\sD_{X,\log}}\sD_X=\sN\otimes_{\sD_{X,\log}}\sD'_{X,\log}\otimes_{\sD'_{X,\log}}\sD_X=\sN'\otimes_{\sD'_{X,\log}}\sD_X.
\]
Due to Lemma \ref{lem:inductive} and the induction hypothesis, we can apply Proposition \ref{prop:generalV} to $\sN'$ with respect to the divisor $D'$, and we find
\[
\sN\otimes_{\sD_{X,\log}}\sD_X=\sN'(*D')=\sN(*D).\qedhere
\]
\end{proof}

\end{document}